\documentclass[11pt,letterpaper]{article}

\pdfoutput=1

\usepackage[utf8]{inputenc}
\usepackage{amsmath,amssymb,amsthm,mathtools}
\usepackage{aliascnt}
\usepackage[
  bibliosources=references.bib,
  probability=texdefault,
  theorems=false
]{pomegranate}
\allowdisplaybreaks

\theoremstyle{plain}
\newtheorem{theorem}{Theorem}[section]
\newaliascnt{lemma}{theorem}
\newtheorem{lemma}[lemma]{Lemma}
\aliascntresetthe{lemma}
\newaliascnt{proposition}{theorem}
\newtheorem{proposition}[proposition]{Proposition}
\aliascntresetthe{proposition}
\newaliascnt{corollary}{theorem}
\newtheorem{corollary}[corollary]{Corollary}
\aliascntresetthe{corollary}
\newaliascnt{conjecture}{theorem}
\newtheorem{conjecture}[conjecture]{Conjecture}
\aliascntresetthe{conjecture}
\newaliascnt{question}{theorem}
\newtheorem{question}[question]{Question}
\aliascntresetthe{question}
\theoremstyle{definition}
\newaliascnt{definition}{theorem}
\newtheorem{definition}[definition]{Definition}
\aliascntresetthe{definition}
\theoremstyle{remark}
\newaliascnt{remark}{theorem}
\newtheorem{remark}[remark]{Remark}
\aliascntresetthe{remark}

\newcommand{\E}{\mathbb E}
\newcommand{\PP}{\mathbb P}
\newcommand{\LE}{\mathcal E}
\newcommand{\Max}{\operatorname{Max}}
\newcommand{\Min}{\operatorname{Min}}
\newcommand{\gap}{\operatorname{gap}}
\newcommand{\width}{\mathrm{w}}
\newcommand{\eps}{\varepsilon}

\title{On the Gap of Finite Posets}
\author[1]{Alireza Haqi}
\affil[1]{Stanford University,
  \href{mailto:ahaqi@stanford.edu}{\texttt{ahaqi@stanford.edu}}}
\date{}
\hypersetup{
  pdftitle={On the Gap of Finite Posets},
  pdfauthor={Alireza Haqi}
}

\begin{document}

\maketitle

\begin{abstract}
Let $P$ be a finite nonempty poset with $n$ elements, let
$f:P\to\{1,\ldots,n\}$ be a uniformly random order-preserving bijection,
and put $h_P(x)=\E[f(x)]$.  \textcite{AiresKahn} introduced $\gap(P)$ as
the largest difference
between consecutive values in the ordered list consisting of $0$, $n+1$,
and all the expected ranks $h_P(x)$.
Write $\width(P)$ for the largest size of a pairwise incomparable subset.
We prove three results.  First, we prove a weighted strengthening of an
ideal inequality conjectured by Kahn and obtain the explicit gap--width
bound
\[
  \gap(P)\le 2\width(P)-1.
\]
Second, for every $L>0$ we construct a width-two poset such that the
expected-rank list of every maximal chain has a gap of at least $L$, with
$0$ and $|P|+1$ added as endpoints.  Finally, for every
$r\in\mathbb N$, we
construct a poset $P_r$ for which the relative order induced on every nonempty selected set
$X$ has base-two entropy below $3|X|$, while
$\gap(P_r)\ge(3/2)^r$.  Thus the gap can be arbitrarily large while the
induced order on every selected set has relatively small entropy.  The key ideas behind all
three results were found by ChatGPT 5.6 Sol.
\end{abstract}

\section{Introduction}\label{sec:introduction}

Let $P$ be a finite nonempty poset with $n$ elements.  A linear extension
is an order-preserving bijection
$f:P\to\{1,\ldots,n\}$.  Choose $f$ uniformly and write
\[
  h_P(x)=\E[f(x)]
\]
for the expected rank of $x$.  If these expected ranks, with repetition,
are $s_1\le\cdots\le s_n$, then, using the definition introduced by
\textcite{AiresKahn}, put
\[
 \gap(P)=\max\{s_1,s_2-s_1,\ldots,s_n-s_{n-1},n+1-s_n\}.
\]
Thus $\gap(P)$ is the largest interval missed by the expected ranks,
including the intervals next to $0$ and $n+1$.  The width $\width(P)$ is
the largest size of a pairwise incomparable subset.

An \emph{ideal} is a downward-closed subset.  The following conjecture is
due to Kahn and gives strong control of the expected ranks carried by any
ideal.

\begin{conjecture}[Kahn's ideal conjecture]\label{conj:kahn-ideal}
Let $D$ be a nonempty ideal of $P$, and let $A$ be the set of maximal
elements of $D$.  Then
\[
  \max_{a\in A} h_P(a)\ge |D|-|A|+1.
\]
\end{conjecture}

The case in which $D$ has at most two maximal elements first appeared in
\textcite[Conjecture~6.5]{BrightwellTrotter}, where it was attributed to
Kahn; the following paragraph also notes, without formally stating, Kahn's
general conjecture.
\textcite[Conjecture~2.2]{BiroTrotter} restated this case.
Kahn's conjecture partly motivated both of these works.
\textcite[Conjecture~2.11]{AiresKahn} later stated the
arbitrary-ideal form explicitly.

As \textcite{AiresKahn} observed after their Conjecture~2.11,
\cref{conj:kahn-ideal} would imply the stronger bound
\[
 \gap(P)\le2\width(P)-1.
\]
We prove a weighted strengthening of \cref{conj:kahn-ideal} in
\cref{thm:weighted}, and hence establish this bound.

There is a second, more local way to ask whether width controls gaps.

\begin{definition}[Chain gap]
\label{def:chain-gap}
Let $P$ be a finite nonempty poset and let
$C=\{y_1<\cdots<y_m\}$ be a nonempty chain in $P$.  The \emph{chain gap}
of $C$ in $P$, denoted $\gap_P(C)$, is the largest difference between
consecutive terms in the increasing list
\[
  0,\ h_P(y_1),\ldots,h_P(y_m),\ |P|+1.
\]
\end{definition}

\textcite[Question~12.2]{AiresKahn} ask the following.

\begin{question}[Chain-gap question]\label{question:chain-gap}
Is there a universal constant $K$ such that every finite nonempty poset $P$
contains a nonempty chain $C$ satisfying
\[
  \gap_P(C)\le K\width(P)?
\]
\end{question}

We answer this question negatively: width two does not prevent
the gap of every chain from becoming arbitrarily large.

The third problem concerns the entropy retained after selecting only
some elements.

\begin{definition}[Hereditary entropy per point]
\label{def:hereditary-entropy}
For a finite nonempty poset $P$, let $\boldsymbol\sigma_P$ be a uniformly
random linear extension, viewed as an ordering.  For
$\varnothing\ne X\subseteq P$, let $\boldsymbol\sigma_P|_X$ be the relative
order induced on $X$.  With $H$ denoting Shannon entropy in bits, define
\[
 \tau(P)=\max_{\varnothing\ne X\subseteq P}
 \frac{H(\boldsymbol\sigma_P|_X)}{|X|}.
\]
The quantity $\tau(P)$ is the \emph{hereditary entropy per point} of $P$.
\end{definition}

\textcite[Conjecture~2.5]{AiresKahn} conjectured that the following question
has a positive answer.

\begin{question}[Gap--entropy question]\label{question:gap-entropy}
Must every sequence $(P_j)$ of finite nonempty posets satisfying
$\gap(P_j)\to\infty$ also satisfy $\tau(P_j)\to\infty$?
\end{question}

We answer this question negatively in \cref{thm:c25-main}: we construct
posets $P_r$ such that $\gap(P_r)\to\infty$ while $\tau(P_r)<3$ for every
$r$.

\subsection{Main results and proof ideas}

For any poset $Q$, let $e(Q)$ be its number of linear extensions.  Our first
result, \cref{thm:weighted},
says that if $D$ is a nonempty ideal of $P$ and $A$ is its set of maximal
elements, then
\[
  \sum_{a\in A}\frac{e(D-a)}{e(D)}h_P(a)
  \ge |D|-|A|+1.
\]
Here $D-a$ means that $a$ is removed from $D$.  The coefficients form a
probability distribution, so this weighted statement implies the
unweighted \cref{conj:kahn-ideal}.  Applying it below and above a threshold
in the expected-rank list gives
\[
  \gap(P)\le2\width(P)-1
\]
as stated in \cref{cor:gap}.

Our second result, \cref{thm:q122-main}, shows that even a width-two poset
can have arbitrarily large expected-rank gaps along every chain.  We use two
$N$-element chains $A_1<\cdots<A_N$ and $B_1<\cdots<B_N$, and impose the
cross-relations $B_j<A_i$ exactly when $j\le\ell_i$, where
\[
 \ell_i=\left\lfloor
 N\left(\frac{i}{N+1}\right)^{1/r}\right\rfloor .
\]
When we represent $A$-elements by horizontal steps and $B$-elements by vertical
steps, the linear extensions are precisely the monotone lattice paths from
$(0,0)$ to $(N,N)$ whose $i$th horizontal step has height at least
$\ell_i$.  Every
maximal chain is either the full $B$-chain or a chain that starts with
$B$ and switches once to $A$.
For the full $B$-chain, the gap following $B_N$ is determined by the
expected number of horizontal steps after the final vertical step, and
simple counting shows that this number is large.  For a
chain that switches from $B$ to $A$ at $i$, the gap between $B_{\ell_i}$ and $A_i$
is determined by how far the path lies above the required height there
and by how many earlier horizontal steps have already reached that height.
We show that this gap is large for every $i$ as well.

The gaps of chains
that switch from $B$ to $A$ are treated according to how quickly the required heights increase.
A rapid increase forces the path far above the required height, while a
slow increase creates a long stretch at the same height and therefore many
earlier horizontal steps at the switch height.  In the intermediate case,
the required heights remain close to a straight line on a long interval,
and a midpoint estimate shows that the path typically rises above that
line.  Choosing the parameters appropriately makes every chain gap exceed
any prescribed value.

Our third result, \cref{thm:c25-main}, shows that the global gap can grow
arbitrarily large while hereditary entropy remains bounded.  Set $P_0=C_2$.
At stage $r$, place a chain of length $L_r$ below and another above
$P_{r-1}$, obtaining
\[
 R_r=C_{L_r}\oplus P_{r-1}\oplus C_{L_r},
 \qquad P_r=R_r\sqcup R_r,
\]
where the two copies in the parallel sum are incomparable.  The two added
chains preserve the gap.  If $m_r=|R_r|$, taking two parallel
copies multiplies every expected-rank spacing, including the endpoint
spacings, by $(2m_r+1)/(m_r+1)$.  Consequently
\[
 \gap(P_r)=\prod_{s=1}^r\frac{2m_s+1}{m_s+1}
 \ge(3/2)^r.
\]

The choice of the added chain lengths is used for the entropy bound.  Fix a
selected set $X\subseteq P_r$, and let $A_0,A_1$ be its parts inside the
two copies of $P_{r-1}$ and let $t$ count its selected elements from the
added chains.
When $L_r$ is large, the two copies of $P_{r-1}$ occupy disjoint blocks in
the random merge, except on an event of probability tending to zero
uniformly over $X$.  On the complementary event, each nonempty block can
be contracted to one marker after its internal restricted order is fixed.
The remaining entropy is at most $2t+c$, where $c=1$ if both $A_0$ and
$A_1$ are nonempty and $c=0$ otherwise.  The exceptional events contribute
errors, and we choose the added chain lengths so that the sum of these
errors is small.  Iterating the resulting one-step inequality gives, for
every nonempty $X\subseteq P_r$,
\[
 H(\boldsymbol\sigma_{P_r}|_X)
 <3|X|,
\]
and hence $\tau(P_r)<3$.

Together, the results distinguish three notions.  Width controls gaps in
the full expected-rank list.  Nevertheless, no individual chain need meet
that list densely, and bounded entropy on every restriction does not
prevent arbitrarily large global gaps.

\subsection{Proof organization}

\Cref{sec:ideal-inequality} proves the weighted ideal inequality and the
gap--width bound.  \Cref{sec:chain-counterexample} constructs the
width-two example and analyzes its maximal chains in three slope ranges.
\Cref{sec:entropy-counterexample} gives the recursive entropy example and
proves its gap and entropy bounds.

\section{Related work}\label{sec:related-work}

The questions in this paper belong to the broader study of balance in
uniformly random linear extensions, surveyed by
\textcite{BrightwellBalancedPairs}.  For distinct elements $x,y\in P$, set
\[
 p_P(x\prec y)=\PP\bigl(f(x)<f(y)\bigr),
 \qquad
 \delta(P)=\max_{x\ne y}
 \min\{p_P(x\prec y),p_P(y\prec x)\}.
\]
The classical $1/3$--$2/3$ conjecture, first posed by
\textcite{Kislitsyn} and later independently by \textcite{Fredman} and
\textcite{Linial}, asserts that $\delta(P)\ge 1/3$ whenever $P$ is not a
chain.  The general theorem of \textcite{KahnSaks} gives
$\delta(P)\ge 3/11$, and \textcite{BrightwellFelsnerTrotter} improved this to
\[
 \delta(P)\ge \frac{5-\sqrt5}{10}.
\]
Further positive cases of the $1/3$--$2/3$ conjecture, including several
families of lattices and posets arising from Young diagrams, were proved by
\textcite{OlsonSagan}.

\textcite{KahnSaks} proposed the asymptotic conjecture that
$\delta(P)\to1/2$ uniformly as $\width(P)\to\infty$.  Thus the conjecture
predicts the existence of some pair whose two possible orders are nearly
equally likely; it does not concern every pair.
\textcite[Theorems~1.4(c) and~1.7(a)]{AiresKahn} proved this conclusion when
$\width(P)=\Omega(|P|)$, and proved $\delta(P)\ge 1/e-o(1)$ when
$\width(P)=\omega(\sqrt{|P|})$.

Expected ranks give a complementary one-element statistic: they are the
first moments of the rank distributions of individual elements.  For a fixed
element, \textcite{StanleyHeightSequences} proved that the numbers of linear
extensions placing it at successive ranks form a log-concave sequence.
Correlation inequalities for these distributions were studied by
\textcite{Fishburn} and \textcite{BrightwellTrotter}.  The case of Kahn's
ideal conjecture in which the ideal has at most two maximal elements first
appeared in
\textcite[Conjecture~6.5]{BrightwellTrotter}, where it was attributed to
Kahn; the following paragraph notes that Kahn also proposed the general
conjecture, without formally stating it.
\textcite[Conjecture~2.2]{BiroTrotter} later restated this case.
\textcite[Conjecture~2.11]{AiresKahn} stated
Kahn's arbitrary-ideal conjecture explicitly and, separately, introduced
$\gap(P)$ and formulated the global gap--width conjecture as their
Conjecture~12.1.  The weighted ideal inequality in
\cref{sec:ideal-inequality} proves Kahn's conjecture and, through the
deduction of Aires and Kahn, yields the explicit bound
$\gap(P)\le 2\width(P)-1$.

Width-two posets have a particularly concrete description.  After the poset
is partitioned into two chains, a linear extension is encoded by a monotone
lattice path, with the cross-comparabilities determining the region in which
the path may lie; \textcite{StanleyWidthTwo} gives an equivalent formulation
through order ideals of an associated skew Young diagram.  \textcite{Linial}
proved the $1/3$--$2/3$ conjecture for width two, \textcite{Aigner}
characterized the width-two equality cases, and \textcite{Sah} later refined
the width-two balance theory using path counting.  The cross-product
conjecture was formulated by \textcite{BrightwellFelsnerTrotter}.
\textcite{ChanPakPanovaCPC,ChanPakPanovaKS} used lattice-path injections to
prove its width-two form and extensions of the Kahn--Saks log-concavity
inequality; \textcite{ChanPakPanovaCPCGeneral} obtained further estimates for
arbitrary posets.  Our width-two construction uses the same encoding for a
different purpose: the relevant quantity is the expected clearance between a
random path and a curved lower boundary, which becomes a spacing between
expected ranks.  Its middle-range estimate uses positive-walk estimates of
\textcite{OttVelenik}.

The chain-gap question posed by \textcite[Question~12.2]{AiresKahn} asks
whether every poset contains a chain whose expected ranks have maximum spacing
$O(\width(P))$.  This is genuinely different from the global
gap--width bound: the examples in
\cref{sec:chain-counterexample} show that even at width two, the global
gap--width bound need not have a width-controlled chain witness.

Entropy also appears in sorting under partial information:
\textcite{KahnKim} used graph entropy to identify an unknown linear extension
with $O(\bigl(\log e(P)\bigr))$ comparisons, where $e(P)$ is the number of
linear extensions.  The hereditary entropy considered here is a different
statistic.  \textcite[Conjecture~2.5]{AiresKahn} conjectured that an unbounded
gap forces unbounded hereditary entropy per point.  This statistic measures
multi-element randomness, rather than the ordering of one pair or the
expected position of one element.  The construction in
\cref{sec:entropy-counterexample} shows that bounded hereditary entropy does
not force a bounded global gap.

\section{Preliminaries and notation}\label{sec:preliminaries}

All ambient posets to which $h_P$, $\gap(P)$, or $\tau(P)$ is applied are
finite and nonempty; auxiliary induced subposets may be empty.  For any
finite poset $Q$, write $\LE(Q)$ for its set of linear extensions and
$e(Q)=|\LE(Q)|$.  The empty poset has one linear extension, the empty
ordering, so $e(\varnothing)=1$.

We use the expected-rank, global-gap, and width notation introduced in
\cref{sec:introduction}.  The chain gap and hereditary entropy per point are
defined in \cref{def:chain-gap,def:hereditary-entropy}, respectively.  We
extend the restriction notation to $X=\varnothing$ by taking
$\boldsymbol\sigma_P|_X$ to be the deterministic empty ordering.

For a finite poset $Q$, let $\Max Q$ and $\Min Q$ denote its sets of
maximal and minimal elements, respectively.  For an induced subposet $Q$
and $x\in Q$, the notation $Q-x$ means the induced subposet on
$Q\setminus\{x\}$.  We write $J(P)$ for the distributive lattice of ideals
of $P$, ordered by inclusion.

\section{Gap--width bound}
\label{sec:ideal-inequality}

We begin by proving a weighted strengthening of Kahn's ideal conjecture.
The maximal elements of every ideal carry enough expected rank, in a
weighted sense, to control every gap in the expected-rank list and thereby
prove the gap--width bound.

\begin{theorem}[Weighted ideal inequality]\label{thm:weighted}
Let $P$ be a finite nonempty poset, let $D$ be a nonempty ideal of $P$,
and put $A=\Max D$.  Then
\begin{equation}\label{eq:weighted}
 \sum_{a\in A}\frac{e(D-a)}{e(D)}h_P(a)
 \ge |D|-|A|+1.
\end{equation}
In particular,
\begin{equation}\label{eq:ideal}
 \max_{a\in\Max D}h_P(a)
 \ge |D|-|\Max D|+1.
\end{equation}
\end{theorem}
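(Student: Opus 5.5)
The plan is to reduce \eqref{eq:weighted} to a deterministic statement about a single linear extension of $D$, and then prove that statement by induction on $|D|$. Write $w_a=e(D-a)/e(D)$ for $a\in A$. Since $e(D)=\sum_{a\in A}e(D-a)$, the $w_a$ are exactly the probabilities that a uniformly random $\tau\in\LE(D)$ ends with $a$. Taking the maximum over $A$ then gives \eqref{eq:ideal} from \eqref{eq:weighted}, so only \eqref{eq:weighted} needs proof.

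\textbf{Step 1: reduction.} Let $f$ be a uniformly random linear extension of $P$, and let $\sigma=f|_D$ be the induced order on $D$, with $\sigma(a)$ the rank of $a$ within $D$. Pointwise, $f(a)\ge\sigma(a)$. The law of $\sigma$ on $\LE(D)$ is not uniform, because it is weighted by the number of ways to merge the filter $P\setminus D$. I therefore do not want to use any property of this law. Instead I aim for the pointwise claim
\[
 \sum_{a\in A} w_a\,\sigma(a)\ \ge\ |D|-|A|+1 \qquad\text{for every }\sigma\in\LE(D).
\]
Taking expectations over $f$ then yields \eqref{eq:weighted}. Small cases support the claim. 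For an antichain the left side is $(|D|+1)/2\ge1$. For $D=\{x<a,\ b\}$ every $\sigma$ gives at least $7/3\ge2$.

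\textbf{Step 2: reformulation.} Let $\tau$ be uniform on $\LE(D)$ and independent of the fixed $\sigma$. Then $\sum_a w_a\sigma(a)=\E[\sigma(\mathrm{last}(\tau))]$. The claim is therefore equivalent to
\[
 \E\bigl[\#\{b\in D:\sigma(b)>\sigma(\mathrm{last}(\tau))\}\bigr]\le |A|-1,
\]
that is, in expectation at most $|A|-1$ elements of $D$ follow the random last element $\mathrm{last}(\tau)$ in the fixed order $\sigma$.

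\textbf{Step 3: induction on $|D|$.} Let $m$ be the last element of $\sigma$, so $m\in A$, and set $D'=D-m$ and $\sigma'=\sigma|_{D'}$. Then $\Max D'=(A\setminus\{m\})\cup U$, where $U$ is the set of elements covered only by $m$. Condition on whether $\mathrm{last}(\tau)=m$, which has probability $w_m$ and contributes $0$ to the count. On the complementary event, $\tau$ is a random extension of $D$ whose last element lies in $A\setminus\{m\}$. I would compare the conditional law of $\mathrm{last}(\tau)$ with the law of the last element of a uniform $\tau'\in\LE(D')$, using the identities $e(D-a)=\sum_{c\in\Max(D-a)}e(D-a-c)$ to express each $w_a$ through the weights of $D'$ and of $D-a$.

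The induction hypothesis for $D'$ gives a count at most $|\Max D'|-1=|A|-2+|U|$ inside $D'$. Moving back to $D$ adds $1$, for $m$ itself, on the event $\mathrm{last}(\tau)\ne m$. The bookkeeping must absorb the $|U|$ excess. The needed fact is that the extra mass which $D'$ places on elements of $U$ is compensated by the event $\mathrm{last}(\tau)=m$, whose probability $w_m=e(D')/e(D)$ is not small relative to that mass.

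\textbf{Main obstacle.} I expect the difficulty to lie entirely in this weight comparison in Step 3, showing that the deficit from $U$ is paid for by $w_m$. If a direct induction does not close, I would first try a pairwise bound instead. For each $b\in D$, I would bound $\PP\bigl(\sigma(\mathrm{last}(\tau))<\sigma(b)\bigr)$ by charging it to maximal elements $a\in A$ with $a\ge b$ and $a\neq\mathrm{last}(\tau)$. I would then sum the charges so that each element of $A$ other than the last is charged in total at most once in expectation.
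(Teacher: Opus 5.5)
Your Step~1 reduction is where the argument breaks. The pointwise inequality $\sum_{a\in A}w_a\sigma(a)\ge|D|-|A|+1$ is false for some $\sigma\in\LE(D)$. So the induction in Step~3, and the pairwise-charging fallback (also a statement about a fixed $\sigma$), are aimed at a false target.

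Take $D=\{x<a\}\sqcup\{y<z<b\}$, so $A=\{a,b\}$ and $|D|-|A|+1=4$. Here $e(D)=10$, $e(D-a)=4$ and $e(D-b)=6$, so $w_a=2/5$ and $w_b=3/5$. For $\sigma=(x,a,y,z,b)$ the left side is $\tfrac25\cdot2+\tfrac35\cdot5=\tfrac{19}{5}<4$. In your Step~2 language, the expected number of elements following $\mathrm{last}(\tau)$ is $\tfrac25\cdot3=\tfrac65>1=|A|-1$. More generally, $C_s\sqcup C_s$ with one chain listed first gives $3s/2<2s-1$ for every $s\ge3$. For $P=D$ the theorem still holds ($h_P(a)=4$, $h_P(b)=9/2$, weighted sum $4.3$), but only on average over $\sigma$.

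You also cannot repair this by using the true law of $f|_D$ while keeping the bound $f(a)\ge\sigma(a)$. That bound discards exactly what makes the theorem true. Add a chain $u_1<\cdots<u_M$ lying above $a$ only, so $D$ is still an ideal of $P=D\cup\{u_1,\ldots,u_M\}$. Each $\sigma\in\LE(D)$ is induced by $\binom{M+k}{k}$ extensions of $P$, where $k$ counts the elements of $D$ after $a$ in $\sigma$. Only $(x,a,y,z,b)$ has $k=3$, so $\PP\bigl(f|_D=(x,a,y,z,b)\bigr)\to1$ as $M\to\infty$. Writing $\sigma_f=f|_D$, this gives $\E\sum_a w_a\sigma_f(a)\to\tfrac{19}{5}<4$. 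The theorem survives only because many $u_j$ precede $b$ in $f$, which makes $h_P(b)$ grow linearly in $M$.

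So any proof must account for how $U=P\setminus D$ interleaves with $D$. The paper does this in two steps:
\begin{itemize}
\item It writes $\sum_a e(D-a)\bigl(n+1-h_P(a)\bigr)-e(D)\,|A\cup U|$ as an average over prefix ideals $I$ of $P$, weighted by $G(I)$, using the prefix--tail and edge-normalization identities.
\item It shows each summand is nonpositive via a last-color monotonicity lemma derived from Fishburn's correlation inequality. This is applied along a path $D\to D\cap I\to I$ in $J(P)$, with the elements of $(A\cup U)\cap I$ colored good.
\end{itemize}
Retaining the contribution of $U$ to the ranks, together with that correlation input, is what your plan is missing. No deterministic statement about a single extension of $D$ can carry the argument.
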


The unweighted conclusion \eqref{eq:ideal} is equivalent to Kahn's ideal
conjecture, stated explicitly by \textcite[Conjecture~2.11]{AiresKahn} and
attributed there to Kahn.  As they observe immediately afterward, Kahn's
conjecture implies the following corollary.

\begin{corollary}\label{cor:gap}
Every finite nonempty poset $P$ satisfies
\begin{equation}\label{eq:gap-bound}
 \gap(P)\le 2\width(P)-1.
\end{equation}
In particular, this proves the gap--width conjecture
\cite[Conjecture~12.1]{AiresKahn}.
\end{corollary}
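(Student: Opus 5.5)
The corollary follows from \eqref{eq:ideal} applied to $P$ and to its dual, using the fact that expected ranks increase strictly along comparabilities. The lower ideal below a gap and the upper filter above it each have an antichain of extremal elements, so each end of the gap lies within about $\width(P)$ of its index $k$.

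\textbf{Step 1: strict monotonicity.} If $x<y$ in $P$, then every linear extension satisfies $f(x)<f(y)$. Hence $f(y)-f(x)\ge1$ pointwise, and $h_P(y)\ge h_P(x)+1$. Consequently every set of the form $D_t=\{x:h_P(x)\le t\}$ is an ideal, and its complement $U_t$ is a filter.

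\textbf{Step 2: an interior gap.} Let $s_1\le\cdots\le s_n$ be the sorted expected ranks, and consider a consecutive pair with $s_k<s_{k+1}$, where $1\le k\le n-1$. Gaps of length zero need no argument.

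Put $D=D_{s_k}$. Then $|D|=k$, and $D$ is a nonempty ideal whose maximal elements all have expected rank at most $s_k$. The set $\Max D$ is an antichain, so $|\Max D|\le\width(P)$. By \eqref{eq:ideal},
\[
 s_k\ \ge\ \max_{a\in\Max D}h_P(a)\ \ge\ k-\width(P)+1 .
\]

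For the upper end, pass to the dual poset $P^*$. Reversing a uniform linear extension gives a uniform linear extension of $P^*$, so $h_{P^*}(x)=n+1-h_P(x)$. The filter $U=P\setminus D$ is a nonempty ideal of $P^*$ with $n-k$ elements. Its maximal elements in $P^*$ are the minimal elements of $U$ in $P$; they form an antichain, and their $P$-ranks are all at least $s_{k+1}$. Applying \eqref{eq:ideal} in $P^*$ gives
\[
 n+1-s_{k+1}\ \ge\ n+1-\min_{u\in\Min U}h_P(u)\ \ge\ (n-k)-\width(P)+1 .
\]
Therefore $s_{k+1}\le k+\width(P)$, and subtracting the two bounds yields $s_{k+1}-s_k\le 2\width(P)-1$.

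\textbf{Step 3: endpoint gaps.} For the first gap, apply the dual argument with $D=\varnothing$, so that $U=P$; this gives $s_1\le\width(P)\le 2\width(P)-1$. For the last gap, apply \eqref{eq:ideal} to $D=P$; this gives $s_n\ge n-\width(P)+1$, hence $n+1-s_n\le\width(P)$.

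Combining Steps 2 and 3 gives \eqref{eq:gap-bound}, and therefore the gap--width conjecture. The only point needing care is the duality bookkeeping: checking that $h_{P^*}=n+1-h_P$, and that the minimal elements of the filter $U$ in $P$ are exactly the maximal elements of $U$ as an ideal of $P^*$. All the substantive work is contained in \cref{thm:weighted}.
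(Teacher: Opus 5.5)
Your proof is correct and is essentially the argument the paper relies on. The paper does not write out the deduction; it cites Aires and Kahn's observation that applying \eqref{eq:ideal} below and above a threshold in the expected-rank list gives the bound. That is exactly what you do with the sublevel ideal $D_{s_k}$ and, via $h_{P^*}=n+1-h_P$, with the complementary filter, and your handling of the endpoint gaps and the count $|D_{s_k}|=k$ supplies the details the paper leaves implicit.
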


\subsection{Last-color monotonicity}

We use the ideal form of Fishburn's inequality.  Its usual formulation is
for filters; the following equivalent version follows by reversing the
order.

\begin{theorem}[\textcite{Fishburn}]\label{thm:fishburn}
If $K$ and $L$ are ideals of a finite poset, then
\begin{equation}\label{eq:fishburn}
 \frac{e(K\cup L)e(K\cap L)}{e(K)e(L)}
 \ge
 \frac{|K\cup L|!\,|K\cap L|!}{|K|!\,|L|!}.
\end{equation}
\end{theorem}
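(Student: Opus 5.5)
The plan is to deduce \cref{thm:fishburn} from Fishburn's original inequality, which is stated for filters (up-sets), by passing to the dual poset, and not to reprove the inequality itself. Let $Q$ be the finite poset and let $Q^{*}$ be its order dual, with $x\le^{*}y$ if and only if $y\le x$.

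The first step is to record the relevant properties of duality. A subset $K\subseteq Q$ is an ideal of $Q$ exactly when it is a filter of $Q^{*}$. Union and intersection are set operations, so they commute with this identification: if $K,L$ are ideals of $Q$, then $K\cup L$ and $K\cap L$ are filters of $Q^{*}$, and the same sets are ideals of $Q$.

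The second step is to check that the counts $e(\cdot)$ are unchanged. For any subset $S$, the induced subposet of $Q^{*}$ on $S$ is the dual of the induced subposet of $Q$ on $S$. Reversing a linear ordering, $f\mapsto |S|+1-f$, is a bijection between $\LE(Q|_S)$ and $\LE(Q^{*}|_S)$. Hence $e(Q|_S)=e(Q^{*}|_S)$ for each of $S=K$, $L$, $K\cup L$, $K\cap L$. The empty case causes no trouble, since $e(\varnothing)=1$ on both sides.

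The third step is to apply Fishburn's filter inequality in $Q^{*}$ to the filters $K$ and $L$. This gives
\[
 \frac{e(K\cup L)\,e(K\cap L)}{e(K)\,e(L)}\ge\frac{|K\cup L|!\,|K\cap L|!}{|K|!\,|L|!}
\]
with all $e$-values computed in $Q^{*}$. By the second step these equal the corresponding values in $Q$, which is exactly \eqref{eq:fishburn}.

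There is no substantive obstacle here. The only point needing care is to confirm that Fishburn's original statement normalizes by the same factorials, namely that it compares $e(\cdot)/|\cdot|!$, the probability that a random ordering of the set is a linear extension. It must not instead be stated in terms of probabilities within an ambient poset, which would change the form of the inequality. If a self-contained argument were wanted, I would follow Fishburn's route and apply the Ahlswede--Daykin four functions theorem on the distributive lattice $J(Q)$, with weights built from $e(I)/|I|!$. I would not attempt that here, since the result is stated in the paper as a citation.
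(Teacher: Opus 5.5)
Your proposal is correct and takes the same approach as the paper. The paper also cites Fishburn's filter inequality and says the ideal form "follows by reversing the order." Your checks fill in that remark: ideals of $Q$ are exactly the filters of $Q^{*}$, and $e(\cdot)$ of each induced subposet is unchanged by duality, via the bijection $f\mapsto |S|+1-f$.
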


Give the elements of a poset arbitrary colors.  For a nonempty poset $Q$
and a set of colors $\mathcal S$, let $p_Q(\mathcal S)$ denote the
probability that the final element of a uniform linear extension of $Q$
has a color in $\mathcal S$.

\begin{lemma}[Last-color monotonicity]\label{lem:last-color}
Let $c\in\Max Q$, put $R=Q-c$, and suppose $|R|=m\ge1$.  Then
\begin{equation}\label{eq:last-color}
\begin{aligned}
 p_Q(\mathcal S)&\ge p_R(\mathcal S)
 &&\text{if the color of $c$ belongs to $\mathcal S$},\\
 p_R(\mathcal S)&\ge p_Q(\mathcal S)
 &&\text{if the color of $c$ lies outside $\mathcal S$}.
\end{aligned}
\end{equation}
\end{lemma}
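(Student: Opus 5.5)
The plan is to compare $p_Q$ and $p_R$ one maximal element at a time using Fishburn's inequality (\cref{thm:fishburn}), prove the second line of \eqref{eq:last-color} directly, and deduce the first line by complementation.

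\emph{Setup.} The final element of a linear extension of a nonempty poset $Q$ is some $b\in\Max Q$. The extensions ending in $b$ are in bijection with $\LE(Q-b)$. Hence
\[
 p_Q(\mathcal S)=\sum_{\substack{b\in\Max Q\\ \text{color of } b\in\mathcal S}}\frac{e(Q-b)}{e(Q)},
\]
and similarly for $R$. Since $R=Q-c$, every $b\in\Max Q$ with $b\ne c$ lies in $R$ and is maximal there. Conversely, $\Max R\subseteq \Max Q\cup\{\text{elements below } c\}$.

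\emph{Key comparison.} Fix $b\in\Max Q$ with $b\ne c$. Both $K=R=Q-c$ and $L=Q-b$ are ideals of $Q$, because each is obtained by deleting a maximal element. Their union is $K\cup L=Q$ and their intersection is $K\cap L=R-b$. Their sizes are $|K|=|L|=m$, $|K\cup L|=m+1$ and $|K\cap L|=m-1$; here $m\ge1$ is needed. \Cref{thm:fishburn} then gives
\[
 \frac{e(Q)\,e(R-b)}{e(R)\,e(Q-b)}\ge\frac{(m+1)!\,(m-1)!}{m!\,m!}=\frac{m+1}{m}.
\]
Rearranging,
\[
 \frac{e(Q-b)}{e(Q)}\le\frac{m}{m+1}\cdot\frac{e(R-b)}{e(R)}\le\frac{e(R-b)}{e(R)}.
\]
In words, every maximal element of $Q$ other than $c$ is no more likely to be last in $Q$ than it is to be last in $R$.

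\emph{Second line of \eqref{eq:last-color}.} Suppose the color of $c$ lies outside $\mathcal S$. Then $c$ contributes nothing to $p_Q(\mathcal S)$. Summing the comparison over all $b\in\Max Q\setminus\{c\}$ with color in $\mathcal S$ gives
\[
 p_Q(\mathcal S)\le\sum_{\substack{b\in\Max Q\setminus\{c\}\\ \text{color of } b\in\mathcal S}}\frac{e(R-b)}{e(R)}\le p_R(\mathcal S).
\]
The last step holds because these $b$ form a subset of $\Max R$ and all the terms are nonnegative. The terms dropped are those for elements of $\Max R$ lying below $c$.

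\emph{First line of \eqref{eq:last-color}.} Suppose the color of $c$ belongs to $\mathcal S$. Let $\mathcal S^c$ be the complementary set of colors; the color of $c$ lies outside $\mathcal S^c$. The previous case gives $p_Q(\mathcal S^c)\le p_R(\mathcal S^c)$. Since $Q$ and $R$ are nonempty, the last-color probabilities of $\mathcal S$ and $\mathcal S^c$ sum to $1$ in each poset. Therefore $p_Q(\mathcal S)=1-p_Q(\mathcal S^c)\ge 1-p_R(\mathcal S^c)=p_R(\mathcal S)$.

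The only substantive step is choosing the ideals $K=Q-c$ and $L=Q-b$ in Fishburn's inequality so that their intersection is $R-b$. I expect this to be the main point; everything else is bookkeeping.
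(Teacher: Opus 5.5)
Your proof is correct and follows the paper's argument. Both apply Fishburn's inequality to the ideals $Q-c$ and $Q-b$ to get $e(Q-b)/e(Q)\le e(R-b)/e(R)$ for each maximal $b\ne c$, then sum over suitably colored maximal elements and use complementation. The only cosmetic difference is the order: the paper proves the first line directly by summing over colors outside $\mathcal S$ and gets the second by complementation, while you prove the second line directly and complement to get the first.
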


\begin{proof}
Fix $b\in\Max R$.  If $b<c$, then its last-place probability in $Q$ is
zero.  Otherwise Fishburn's inequality, applied to the ideals $Q-b$ and
$Q-c=R$, gives
\[
 e(Q)e(R-b)
 \ge \frac{m+1}{m}e(Q-b)e(R).
\]
Consequently,
\[
 \frac{e(Q-b)}{e(Q)}
 \le \frac{m}{m+1}\frac{e(R-b)}{e(R)}
 \le \frac{e(R-b)}{e(R)}.
\]
No nonmaximal element of $R$ becomes maximal when $c$ is added.  If the
color of $c$ belongs to $\mathcal S$, summing the displayed comparison
over maximal $b$ whose colors lie outside $\mathcal S$ proves the first
inequality in \eqref{eq:last-color}.  Applying it to the complementary
set of colors proves the second.
\end{proof}

\subsection{A pointwise inequality on the ideal lattice}

Fix a finite nonempty poset $P$ and a nonempty ideal $D$ of $P$, and set
\begin{equation}\label{eq:notation}
 A=\Max D,
 \qquad U=P\setminus D,
 \qquad \lambda_a=e(D-a),
 \qquad \Lambda=e(D)=\sum_{a\in A}\lambda_a.
\end{equation}
For $B\subseteq A$, write
\begin{equation}\label{eq:lambda-subset}
 \lambda(B)=\sum_{a\in B}\lambda_a.
\end{equation}

\begin{proposition}\label{prop:pointwise}
For every ideal $I$ of $P$,
\begin{equation}\label{eq:pointwise}
 e(I)\sum_{a\in A\cap I}e(D-a)
 \le
 e(D)\sum_{x\in\Max I\cap(A\cup U)}e(I-x).
\end{equation}
\end{proposition}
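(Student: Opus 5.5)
We compare last-element probabilities. Dividing \eqref{eq:pointwise} by $e(I)e(D)$ (both positive) shows it is equivalent to
\[
 \sum_{a\in A\cap I}\frac{e(D-a)}{e(D)}
 \le
 \sum_{x\in\Max I\cap(A\cup U)}\frac{e(I-x)}{e(I)} .
\]
For a nonempty poset $Q$ and $x\in\Max Q$, the ratio $e(Q-x)/e(Q)$ is the probability that $x$ is the final element of a uniform linear extension of $Q$. So the left side is the probability that the last element of a uniform extension of $D$ lies in $A\cap I$. The right side is the probability that the last element of a uniform extension of $I$ lies in $A\cup U$. If $A\cap I=\varnothing$, which includes the cases $I=\varnothing$ and $D\cap I=\varnothing$, the left side is $0$ and there is nothing to prove. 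We therefore assume $D\cap I\neq\varnothing$.

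We color the elements of $P$ so that both sides become values of $p_Q(\mathcal S)$ for one color set, and then apply \cref{lem:last-color} along a path of ideals from $D$ to $I$. Give color $1$ to the elements of $(A\cap I)\cup U$ and color $0$ to all other elements, namely those of $D\setminus A$ and $A\setminus I$. Take $\mathcal S=\{1\}$. Inside $D$, the color-$1$ elements are exactly $A\cap I$, so the left side equals $p_D(\mathcal S)$. Inside $I$, the color-$1$ elements are $(A\cap I)\cup(U\cap I)=I\cap(A\cup U)$, so the right side equals $p_I(\mathcal S)$. Hence it suffices to show $p_D(\mathcal S)\le p_I(\mathcal S)$.

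\emph{Step 1: from $D$ down to $D\cap I$.} The set $D\setminus I$ is a filter of $D$. We delete its elements one at a time in the reverse of a linear extension order. Each deleted element is then maximal in the current ideal, and the remaining set still contains $D\cap I\neq\varnothing$, so the hypothesis $m\ge1$ of \cref{lem:last-color} holds. Every element of $D\setminus I$ has color $0$: it lies either in $A\setminus I$ or in $D\setminus A$. The second case of \cref{lem:last-color} therefore shows that $p(\mathcal S)$ does not decrease at any step, giving $p_D(\mathcal S)\le p_{D\cap I}(\mathcal S)$.

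\emph{Step 2: from $D\cap I$ up to $I$.} We add the elements of $I\setminus D\subseteq U$ in the order of a linear extension of $I\setminus D$. Since $D\cap I$ and $I$ are ideals, each added element is maximal in the enlarged set, and every intermediate set is an ideal of $P$. Every added element has color $1$. The first case of \cref{lem:last-color}, applied with $Q$ the enlarged set and $R$ the set before the addition (which is nonempty), shows that $p(\mathcal S)$ does not decrease. Hence $p_{D\cap I}(\mathcal S)\le p_I(\mathcal S)$.

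Combining the two steps gives $p_D(\mathcal S)\le p_I(\mathcal S)$, which is the required inequality. The only real content is the choice of coloring. Elements of $A\setminus I$ must get color $0$ so that they can be deleted with monotonicity in the right direction, yet they must not be counted on the left side; this is consistent because the left side counts only $A\cap I$. The remaining care is in the bookkeeping: checking that every intermediate set is an ideal, that each added or removed element is maximal, and that the nonemptiness hypothesis of \cref{lem:last-color} holds throughout. This is guaranteed once the degenerate case $D\cap I=\varnothing$ has been set aside.
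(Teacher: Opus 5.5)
Your proof is correct and follows the paper's argument essentially step for step. It uses the same good/bad coloring; coloring all of $U$ rather than only $U\cap I$ is immaterial, since elements of $U\setminus I$ never occur in any poset along the path. It uses the same descent from $D$ to $D\cap I$ through bad elements, the same ascent to $I$ through good elements, and the two cases of Lemma~\ref{lem:last-color}. Your explicit deletion and insertion orders, which keep every intermediate set an ideal with the moved element maximal and use $D\cap I\ne\varnothing$ to keep each smaller poset nonempty, supply bookkeeping that the paper leaves implicit.
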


\begin{proof}
Fix $I$.  If $D\cap I=\varnothing$, then $A\cap I=\varnothing$ and
\eqref{eq:pointwise} is immediate.  Hence assume $D\cap I\ne\varnothing$.
Color the elements of
\begin{equation}\label{eq:good-set}
 \mathcal G_I=(A\cap I)\cup(U\cap I)
\end{equation}
good; color every other element bad.  The final element of an extension
of $D$ lies in $A$, and thus
\begin{equation}\label{eq:pD}
 p_D(\{\mathrm{good}\})
 =\frac{\sum_{a\in A\cap I}e(D-a)}{e(D)}.
\end{equation}

There is a downward path in the ideal lattice from $D$ to $D\cap I$
which deletes elements of $D\setminus I$, all of which are bad.  There is
then an upward ideal-lattice path from $D\cap I$ to $I$ which adds elements
of
\[
 I\setminus D=U\cap I;
\]
all of them are good.  At each step, the changed element is maximal in
the larger ideal, since otherwise deleting it would not leave an ideal.
Lemma~\ref{lem:last-color}, in its deletion and addition forms, therefore
gives the intermediate inequalities
\begin{equation}\label{eq:monotone-path}
 p_I(\{\mathrm{good}\})
 \ge p_{D\cap I}(\{\mathrm{good}\})
 \ge p_D(\{\mathrm{good}\}).
\end{equation}
On the other hand,
\begin{equation}\label{eq:pI}
 p_I(\{\mathrm{good}\})
 =\frac{\sum_{x\in\Max I\cap(A\cup U)}e(I-x)}{e(I)}.
\end{equation}
Combining \eqref{eq:pD}--\eqref{eq:pI} proves
\eqref{eq:pointwise}.
\end{proof}

\subsection{Averaging over ideals}

Write $n=|P|$.  Let $J(P)$ be the set of ideals of $P$.  For
$I\in J(P)$, let $G(I)$ be the number of ways to continue an ordered
prefix whose underlying set is $I$ to a full extension of $P$.  To conclude
the proof of \cref{thm:weighted}, we need the following two identities.

\begin{lemma}\label{lem:prefix-tail}
For every $x\in P$,
\begin{equation}\label{eq:prefix-tail}
 \frac{1}{e(P)}\sum_{I\in J(P)}G(I)e(I)
 \mathbf 1_{\{x\in I\}}
 =n+1-h_P(x).
\end{equation}
\end{lemma}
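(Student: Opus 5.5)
The proof is a double-counting argument over pairs (linear extension, prefix). The key interpretation is that $G(I)e(I)$ counts linear extensions of $P$ whose first $|I|$ elements form exactly the set $I$. Indeed, a linear extension $f$ of $P$ with $f^{-1}(\{1,\dots,|I|\})=I$ is the same thing as an ordering of $I$ that is a linear extension of $I$ (there are $e(I)$ of these), followed by an ordering of $P\setminus I$ that continues it. By definition, the number of such continuations is $G(I)$, and it does not depend on which prefix ordering of $I$ we chose. We could also identify $G(I)=e(P\setminus I)$, but this is not needed.

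Summing over ideals then gives
\[
 \sum_{I\in J(P)}G(I)e(I)\mathbf 1_{\{x\in I\}}
 =\sum_{f\in\LE(P)}\bigl|\{k\in\{0,\ldots,n\}: x\in f^{-1}(\{1,\ldots,k\})\}\bigr|.
\]
This holds because, for a fixed $f$, the prefix sets $f^{-1}(\{1,\dots,k\})$ for $k=0,\dots,n$ are distinct ideals, and every pair $(I,f)$ with $I$ an initial segment of $f$ arises from exactly one $k=|I|$. For a fixed $f$, the element $x$ lies in the $k$-prefix exactly when $k\ge f(x)$. So the number of such $k$ is $n-f(x)+1$.

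Dividing by $e(P)$ and averaging over a uniform $f$ gives $\E[n+1-f(x)]=n+1-h_P(x)$, which is \eqref{eq:prefix-tail}.

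No step is a real obstacle. The only care needed is with the bookkeeping conventions: the empty ideal must be included in $J(P)$ with $e(\varnothing)=1$, and so must $I=P$ with $G(P)=1$. The count ranges over $k=0,\dots,n$, and the empty prefix never contains $x$, so this is consistent.
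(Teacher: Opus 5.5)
Your proposal is correct and is essentially the paper's proof: both interpret $G(I)e(I)$ as the number of extensions whose first $|I|$ elements form $I$. Both then count pairs $(f,k)$ with $f(x)\le k\le n$, giving $n+1-f(x)$ for each $f$, and average over $f$.
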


\begin{proof}
The product $e(I)G(I)$ is the number of full extensions whose first
$|I|$ elements have underlying set $I$.  Thus the numerator on the
left of \eqref{eq:prefix-tail} counts pairs $(f,k)$ with
$f\in\LE(P)$ and $f(x)\le k\le n$.  For fixed $f$ there are
$n+1-f(x)$ such values of $k$.  Dividing by $e(P)$ gives
$\E[n+1-f(x)]=n+1-h_P(x)$.
\end{proof}

\begin{lemma}\label{lem:edge-normalization}
For every $x\in P$,
\begin{equation}\label{eq:edge-normalization}
 \frac{1}{e(P)}
 \sum_{\substack{I\in J(P)\\x\in\Max I}}G(I)e(I-x)=1.
\end{equation}
\end{lemma}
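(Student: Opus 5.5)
The plan is to interpret the sum as a double count over linear extensions, exactly as in \cref{lem:prefix-tail}. For an ideal $I$ with $x\in\Max I$, the product $G(I)\,e(I-x)$ counts full extensions $f\in\LE(P)$ of a special form. Such an $f$ places the elements of $I-x$, in some order, in positions $1,\ldots,|I|-1$. It then places $x$ at position $|I|$, and continues with an extension of $P\setminus I$.

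The first step is to check this correspondence. Given a prefix ordering of $I-x$ followed by $x$, we obtain a linear extension of $I$, because $x$ is maximal in $I$ and $I-x$ is itself an ideal. Appending any of the $G(I)$ continuations then yields a full linear extension of $P$. Conversely, take any $f\in\LE(P)$ and let $I$ be the set of the first $f(x)$ elements of $f$. This $I$ is an ideal and contains $x$. Moreover $x$ is maximal in $I$, since $x$ is the last element of $I$ under $f$ and $f$ is order-preserving. Thus $f$ arises from this $I$, and from no other ideal, since $I$ is forced to be the prefix set of size $f(x)$. The terms of the sum therefore count pairs $(f,I)$ in which $I$ is exactly the prefix of $f$ ending at $x$.

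It follows that every $f\in\LE(P)$ is counted exactly once. Hence
\[
 \sum_{\substack{I\in J(P)\\x\in\Max I}}G(I)e(I-x)=e(P),
\]
and dividing by $e(P)$ gives \eqref{eq:edge-normalization}.

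The only point needing care is the bijection, specifically that the prefix set ending at $x$ is counted under $G(I)e(I-x)$ and nowhere else. This holds because the position of $x$ determines $|I|$, and hence determines $I$. The degenerate case $I=\{x\}$ is covered by the convention $e(\varnothing)=1$.
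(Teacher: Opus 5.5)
Your proposal is correct and is essentially the paper's argument: both identify $G(I)\,e(I-x)$ as the number of extensions whose first $|I|$ elements form $I$ with $x$ placed last among them, and observe that each $f\in\LE(P)$ arises from exactly one such $I$. The paper phrases this as each extension traversing exactly one ideal-lattice edge $I-x\to I$ that places $x$; your prefix-of-length-$f(x)$ bijection is the same count, spelled out in more detail.
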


\begin{proof}
For $x\in\Max I$, the product $e(I-x)G(I)$ counts the full extensions
whose ideal-lattice path traverses the edge $I-x\to I$, placing $x$ at
that step.  Every full extension traverses exactly one edge which places
$x$.  The sum in \eqref{eq:edge-normalization} is therefore $e(P)$.
\end{proof}

\begin{proof}[Proof of Theorem~\ref{thm:weighted}]
Put
\[
 d=|D|,\qquad r=|A|.
\]
Since $\sum_{a\in A}\lambda_a=\Lambda$ and
$|A\cup U|=n-d+r$, define
\begin{align*}
 W
 &:=\sum_{a\in A}\lambda_a\bigl(d-h_P(a)\bigr)
   -(r-1)\Lambda\\
 &=\sum_{a\in A}\lambda_a\bigl(n+1-h_P(a)\bigr)
   -\Lambda|A\cup U|.
\end{align*}
Using
$\lambda(A\cap I)=\sum_{a\in A}\lambda_a\mathbf 1_{\{a\in I\}}$
and applying Lemma~\ref{lem:prefix-tail} rewrites the first term in the
last line.  Summing Lemma~\ref{lem:edge-normalization} over
$x\in A\cup U$ and interchanging the sums rewrites the second term.
Therefore,
\begin{equation}\label{eq:flow-sum}
 W=\frac{1}{e(P)}\sum_{I\in J(P)}G(I)
 \left[
 e(I)\lambda(A\cap I)
 -\Lambda\sum_{x\in\Max I\cap(A\cup U)}e(I-x)
 \right].
\end{equation}
Every bracket in \eqref{eq:flow-sum} is nonpositive by
Proposition~\ref{prop:pointwise}, so $W\le0$.  Therefore,
\[
 \sum_{a\in A}\lambda_a\bigl(d-h_P(a)\bigr)
 \le (r-1)\Lambda.
\]
Dividing by $\Lambda=e(D)$ and rearranging gives
\eqref{eq:weighted}.  The numbers $\lambda_a/\Lambda$ are positive and
sum to one, so \eqref{eq:ideal} follows.
\end{proof}

\begin{corollary}\label{cor:isolated-balance}
Let $P$ be a finite poset with $n=|P|$.  For distinct $u,v\in P$, write
\[
 \delta_P(u,v)=\min\{p_P(u\prec v),p_P(v\prec u)\}.
\]
As observed by \textcite[Equation~(63)]{AiresKahn}, if $P$ contains an
element incomparable with every other element, then, as $\width(P)\to\infty$,
there are distinct $u,v\in P$ such that
\[
 \delta_P(u,v)=\frac12-o_n(1),
 \qquad o_n(1)\longrightarrow0.
\]
\end{corollary}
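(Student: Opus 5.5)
The plan is to take the isolated element $z$ as one member of the balanced pair and to find its partner using the gap--width bound of \cref{cor:gap}, applied to $Q=P-z$. When the width is a constant fraction of $n$, I would instead invoke the linear-width theorem of \textcite[Theorem~1.4(c)]{AiresKahn}.

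\emph{Step 1: reduce to expected ranks in $Q$.} Since $z$ is incomparable with everything, a uniform linear extension of $P$ is obtained from a uniform linear extension $g$ of $Q$ by inserting $z$ into one of the $n$ slots, chosen uniformly and independently of $g$. For $y\in Q$, $z$ precedes $y$ exactly when the slot index is at most $g(y)$. Hence
\[
 p_P(z\prec y)=\frac{h_Q(y)}{n}.
\]
So it suffices to find $y\in Q$ with $h_Q(y)$ close to $n/2$.

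\emph{Step 2: apply the gap bound.} We have $\width(Q)=\width(P)-1\ge1$, so $Q$ is nonempty once $\width(P)\ge2$. By \cref{cor:gap}, the list $0,\ h_Q(y)\ (y\in Q),\ n$ has consecutive differences at most $2\width(Q)-1$. Hence some $y$ satisfies $|h_Q(y)-n/2|\le\width(P)$. This gives
\[
 \delta_P(z,y)\ge \frac12-\frac{\width(P)}{n}.
\]
This is $\tfrac12-o(1)$ whenever $\width(P)=o(n)$.

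\emph{Step 3: the linear-width regime.} If $\width(P)\ge\eps n$ for a fixed $\eps>0$, I would quote \textcite[Theorem~1.4(c)]{AiresKahn}. It says that $\max_{u\ne v}\delta_P(u,v)\to\tfrac12$ as $n\to\infty$ along posets with $\width=\Omega(n)$. Since $n\ge\width(P)\to\infty$, this applies.

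\emph{Step 4: combine the regimes.} Define $\phi_\eps(n)$ as the worst value of $\tfrac12-\max\delta$ over $n$-element posets with $\width\ge\eps n$; Step 3 gives $\phi_\eps(n)\to0$ for each fixed $\eps$. Choose $\eps=\eps(n)\downarrow0$ slowly enough that $\phi_{\eps(n)}(n)\to0$; this is a standard diagonal choice. Then Step 2 covers $\width(P)<\eps(n)n$ with error at most $\eps(n)$, and Step 3 covers the rest.

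The main obstacle is Step 4. The cited theorem is stated for fixed proportionality constants, so I would have to check that its $o(1)$ is uniform over posets with $\width\ge\eps n$ for each fixed $\eps$. If it is not, I would try to extract that uniformity from its proof.
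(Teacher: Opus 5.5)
Your proposal is correct and follows essentially the same route as the paper. It uses the isolated element as one member of the pair, the insertion identity $p_P(z\prec y)=h_Q(y)/n=h_P(y)/(n+1)$, \cref{cor:gap} in the sublinear-width regime, and Theorem~1.4(c) of Aires--Kahn in the linear-width regime. The paper differs only cosmetically: it applies the gap bound to $P$ itself, using $h_P(z)=(n+1)/2$ as a point of the list, and it combines the two regimes by contradiction along a counterexample subsequence instead of your explicit diagonal $\eps(n)$.

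The uniformity you flag as the main obstacle is automatic. For fixed $\eps$, the asymptotic statement of Theorem~1.4(c) is equivalent to $\phi_\eps(n)\to0$, since any failure would produce a sequence of posets with $\width\ge\eps n$ violating it. Because $\phi_\eps(n)$ is nonincreasing in $\eps$, your diagonal choice then goes through, and the paper's subsequence argument relies on exactly the same fact.
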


\begin{proof}
Write $n=|P|$ and $w=\width(P)$, and let $x$ be incomparable with every
other element.  If the conclusion failed, then
\textcite[Theorem~1.4(c)]{AiresKahn} would force $w=o(n)$ along a
counterexample subsequence.  Since $h_P(x)=(n+1)/2$, \cref{cor:gap}
gives some $y\ne x$ such that
\[
 \left|h_P(y)-\frac{n+1}{2}\right|
 \le \gap(P)\le 2w-1=o(n).
\]
Inserting $x$ uniformly into an extension of $P-x$ gives
$p_P(x\prec y)=h_P(y)/(n+1)=1/2+o(1)$.  Hence
$\delta_P(x,y)=1/2-o_n(1)$ for some nonnegative $o_n(1)\to0$, a
contradiction.
\end{proof}

\section{Chain-gap counterexample}
\label{sec:chain-counterexample}

In this section we address \cref{question:chain-gap} by constructing a counterexample where every maximal chain has a large gap while the poset itself has width two.

\begin{theorem}\label{thm:q122-main}
For every $L>0$ there is a finite poset $P$ of width two such that
\[
  \gap_P(C)\ge L
\]
for every nonempty chain $C\subseteq P$.
\end{theorem}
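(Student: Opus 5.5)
We follow the construction sketched in the introduction. Fix parameters $r\ge1$ and $N$, take chains $A_1<\cdots<A_N$ and $B_1<\cdots<B_N$, and impose $B_j<A_i$ exactly when $j\le\ell_i$, where $\ell_i=\lfloor N(i/(N+1))^{1/r}\rfloor$. This is a poset, since $\ell_i$ is nondecreasing in $i$. It has width two, being a union of two chains and not itself a chain. No $A_i$ lies below any $B_j$.

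Since the chain gap can only increase when elements are removed from a chain, it suffices to treat maximal chains. Because comparabilities go only from $B$ up to $A$, every maximal chain is one of two kinds: the full chain $B_1<\cdots<B_N$, or a chain $B_1<\cdots<B_{\ell_i}<A_i<\cdots<A_N$ that switches at some $i$ with $\ell_i>\ell_{i-1}$ (or $\ell_i=0$, in which case the chain is the full $A$-chain). It therefore suffices to exhibit, for each maximal chain, one consecutive pair with expected-rank difference at least $L$. The relevant pairs are: the endpoint pair $(B_N, 2N+1)$ for the $B$-chain, the pair $(0,A_1)$ for the $A$-chain, and the switch pair $(B_{\ell_i},A_i)$ otherwise.

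We then encode extensions as monotone lattice paths from $(0,0)$ to $(N,N)$, with horizontal steps for $A$ and vertical steps for $B$, such that the $i$th horizontal step occurs at height at least $\ell_i$. In this encoding, $f(A_i)=i+y_i$, where $y_i$ is the height of the $i$th horizontal step, and $f(B_j)=j+x_j$, where $x_j$ is the number of horizontal steps before the $j$th vertical step. The three kinds of gap become:
\[
 h(A_i)-h(B_{\ell_i})=\E\bigl[(y_i-\ell_i)+(i-x_{\ell_i})\bigr],
\qquad
 2N+1-h(B_N)=N+1-\E[x_N],
\qquad
 h(A_1)=1+\E[y_1].
\]
The quantity $i-x_{\ell_i}$ counts the horizontal steps with index at most $i$ taken at height at least $\ell_i$. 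Both summands are nonnegative, so it suffices to show that one of them is large.

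The endpoint cases follow by counting. Since $\ell_i<N$ for all $i$, the final horizontal steps are not forced up, and the number of horizontal steps at height $N$ is comparable to a geometric-like count; we will show its expectation grows with $N$ once the boundary near $i=N$ is flat enough, and this is where the exponent $1/r$ enters. Symmetrically, $\E[y_1]$ is large because $\ell_1\approx N(N+1)^{-1/r}$, which is large when $N\gg$ a power depending on $r$.

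For the switch pairs we split by the local slope of $\ell$ near $i$. \emph{Steep case:} if $\ell$ rises by much more than the slope of the diagonal just after $i$, the path must climb well above $\ell_i$ before step $i$ to leave room, so $\E[y_i-\ell_i]$ is large; this is shown by comparing path counts with an injection that lowers the $i$th horizontal step. \emph{Flat case:} if $\ell$ is constant on a long stretch $[i-k,i]$, then many horizontal steps with index at most $i$ sit at height at least $\ell_i$, and $i-x_{\ell_i}$ is large. \emph{Intermediate case:} the boundary is close to a line of bounded-away slope on a window of length $M$ around $i$. The path conditioned to stay above such a nearly straight barrier behaves like a random walk conditioned to stay positive, and positive-walk estimates (\textcite{OttVelenik}) give an expected height of order $\sqrt M$ at the midpoint of the window, hence at $i$. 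Taking $r$ and then $N$ large makes the window lengths, in all regimes, exceed any prescribed multiple of $L^2$.

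The main obstacle is the intermediate case. We must make the conditioning rigorous: the path outside the window influences the entry and exit points, so we condition on the path outside the window and need a lower bound on $\E[y_i-\ell_i]$ that is uniform in these boundary conditions. We will first try a monotone-coupling (FKG-type) argument showing that raising the boundary data only raises the path. This reduces the problem to the worst case, the path pinned to the barrier at both ends of the window, where the Ott–Velenik bridge estimate applies directly. We then need to check that the three cases cover every $i$ simultaneously for a single choice of $(r,N)$.
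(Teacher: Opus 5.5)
Your route is the paper's: the same poset $P_{N,r}$, the reduction to maximal chains, the rank identities for the switch spacing $h(A_i)-h(B_{\ell_i})$ and the terminal spacing $2N+1-h(B_N)$, and a steep/middle/flat split. In the middle regime you also match the paper: condition on the heights at the window ends, use FKG monotonicity to pin both ends to the wall, then apply Ott--Velenik. In the paper's notation, your $y_i$ is $K_i$ and $N-x_N$ is $X_N$.

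The genuine gap is the flat case. The pair $(B_{\ell_i},A_i)$ is consecutive in a maximal chain only when $\ell_{i-1}<\ell_i$. So for every relevant $i$ the wall is \emph{not} constant on $[i-k,i]$ (for $k\ge1$). As stated, your flat case covers only indices whose switch spacing is irrelevant, and there it is trivial. For a genuine switch index with $q=\ell_i$, the long plateau is the preceding one, $\{t:\ell_t=q-1\}$, of length $\Delta_q$. Its $A$-steps are forced only to height $q-1$, so nothing places them at height at least $q$. These indices cannot be absorbed by the middle case either. On the last plateau, where $\ell_t=N-1$, one has $K_t-\ell_t\le1$, so the switch spacing there is large only through $\#\{t<i:K_t\ge q\}$. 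The missing ingredient is your steep-case monotone count with the roles of $A$ and $B$ exchanged. Let $m$ be the number of $A$'s preceding $B_q$. The extensions with a given $m$ number $F(m,q-1)T(m,q)$, i.e.\ prefixes reaching $(m,q-1)$ times suffixes from $(m,q)$. On the range of $m$ where the $A$-steps lie over that plateau, $F$ is constant: appending $A_{m+1}$ is a bijection, because $B_{q-1}$ must already have appeared. There $T$ is nonincreasing, since prepending $A_{m+1}$ injects suffixes from $(m+1,q)$ into those from $(m,q)$. Hence $i-1-m=\#\{t<i:K_t\ge q\}$ has mean at least $\Delta_q/2$, roughly $r/4$ once $\varphi'(i)\le2/r$.

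The terminal spacing also needs repair. For fixed $r$, $\E[X_N]$ does not grow with $N$. The wall has slope about $1/r$ at the right end, and $(N,N)$ lies one unit above $\ell_N=N-1$, so the final horizontal run is of order $r$. You need a bound linear in $r$ and a mechanism for it. The paper swaps $X_j$ and $X_{j+1}$, which injects $\{X_j>X_{j+1}\}$ into $\{X_j<X_{j+1}\}$ (only $M_j$ changes, and it decreases). This gives $\E[X_0]\le\cdots\le\E[X_N]$, hence $(N-q)\E[X_N]\ge N-\E[M_q]\ge N-c_q$. Taking $q=N-\lfloor N/r\rfloor$ yields $\E[X_N]\gtrsim(1-e^{-1})r$.

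Your closing parameter count is therefore off. The steep and flat gains are half a jump or plateau length, of order $r$, so $r$ must exceed a multiple of $L$. Only the middle window must be long compared with $L^2$, which holds for fixed $r$ with windows of order $N^{1/3}$ (keeping the curvature error $O(M^2/N)$ bounded).

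Two smaller points. First, in the steep case the path is not forced above $\ell_i$. The valid injection moves $A_i$ past the following $B_{k+1}$, i.e.\ raises it, and shows that the law of $K_i-\ell_i$ is nondecreasing on $\{0,\dots,\ell_{i+1}-\ell_i\}$. Second, for $r\ge2$ and large $N$ one has $\ell_1\ge1$, so the full $A$-chain is not maximal and your $(0,A_1)$ case is vacuous.
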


\subsection{The counterexample construction}

\begin{definition}
\label{def:q122-poset}
Fix an integer $r\ge2$, and choose $N$ sufficiently large depending on
$r$.  Set
\begin{equation}
 \varphi(x)=N\left(\frac{x}{N+1}\right)^{1/r},
 \qquad
 \ell_i=\lfloor\varphi(i)\rfloor
 \quad(1\le i\le N+1).
 \label{eq:q122-wall}
\end{equation}
We take $N$ large enough that $\ell_1\ge1$; also,
$\ell_{N+1}=N$.  We call $(\ell_i)_{i=1}^N$ the \emph{lower wall} and
$\varphi$ its smooth profile.

The poset $P_{N,r}$ has two chains
\[
 A_1<\cdots<A_N,
 \qquad
 B_1<\cdots<B_N,
\]
and the cross-relations
\begin{equation}
 B_j<A_i\quad\Longleftrightarrow\quad j\le\ell_i.
 \label{eq:q122-relations}
\end{equation}
There are no other cross-relations.
\end{definition}

Since $(\ell_i)$ is nondecreasing, these relations define a poset of
width two.  For the remainder of this section, unless otherwise specified,
write $P=P_{N,r}$.

\subsection{Encoding linear extensions}

\begin{definition}[Height sequence]
\label{def:q122-height-sequence}
Choose a linear extension $f$ of $P$ uniformly at random, and write
$h=h_P$.  For each $i$, let
\[
 K_i=\#\{j:B_j\text{ occurs before }A_i\text{ in }f\}.
\]
The sequence $(K_i)_{i=1}^N$ is the \emph{height sequence} of $f$.
\end{definition}

The relations inside the two chains and the cross-relations
\eqref{eq:q122-relations} imply
\begin{equation}
 \ell_i\le K_i\le N,
 \qquad K_1\le\cdots\le K_N.                            \label{eq:q122-K}
\end{equation}
Thus saying that the height sequence lies above the lower wall means
exactly
\[
 K_i\ge\ell_i\qquad(1\le i\le N).
\]

Conversely, every integer sequence satisfying \eqref{eq:q122-K}
determines exactly one linear extension: insert $A_i$ immediately after
the first $K_i$ elements of the $B$-chain.  Hence a uniformly random
linear extension is equivalent to a uniformly random height sequence
satisfying \eqref{eq:q122-K}.

\begin{definition}
\label{def:q122-coordinates}
For $0\le j\le N$, set
\begin{equation}
 X_j=\#\{i:K_i=j\},\qquad
 M_j=\sum_{t=0}^jX_t,
 \qquad
 c_j=\#\{i:1\le i\le N,\ \ell_i\le j\}.               \label{eq:q122-occupancy}
\end{equation}
Set $c_j=0$ for $j<0$.  Thus $X_j$ counts the $A$-elements at height
$j$, $M_j$ counts those at height at most $j$, and $c_j$ counts the wall
heights at most $j$.  In particular, $X_N$ counts the $A$-elements
occurring after $B_N$.

For $1\le i\le N$, set
\begin{equation}
 G_i=h(A_i)-h(B_{\ell_i}),                              \label{eq:q122-Gdefinition}
\end{equation}
and, whenever $i=1$ or $\ell_{i-1}<\ell_i$, set
\[
 C_i=\{B_1,\ldots,B_{\ell_i},A_i,\ldots,A_N\}.
\]
We call $G_i$ the \emph{switch spacing} and $C_i$ a \emph{mixed chain}.
\end{definition}

The two maximal-chain types and their relevant expected-rank spacings are
summarized in \cref{fig:q122-maximal-chain-spacings}; the following lemma
makes the reduction precise.

\begin{figure}[htbp]
\centering
\begin{tikzpicture}[
  x=1cm,
  y=1cm,
  font=\small,
  panel/.style={
    draw=black!18,fill=black!1,rounded corners=3pt,line width=.55pt
  },
  chain/.style={draw=black!35,line width=1.15pt,line cap=round},
  fullb/.style={draw=orange!85!black,line width=1.15pt,line cap=round},
  selected/.style={draw=blue!62!black,line width=2.25pt,line cap=round},
  gapline/.style={draw=blue!62!black,line width=2.5pt,line cap=round},
  endgap/.style={draw=orange!85!black,line width=2.5pt,line cap=round},
  posetpoint/.style={
    circle,draw=black!65,fill=white,line width=.65pt,
    inner sep=0pt,minimum size=5.2pt
  },
  selectedpoint/.style={
    circle,draw=blue!62!black,fill=blue!62!black,
    inner sep=0pt,minimum size=5.4pt
  },
  rankpoint/.style={
    circle,draw=blue!62!black,fill=white,line width=.8pt,
    inner sep=0pt,minimum size=5pt
  },
  bendpoint/.style={
    circle,draw=orange!85!black,fill=white,line width=.8pt,
    inner sep=0pt,minimum size=5pt
  },
  endpoint/.style={draw=black!65,line width=.75pt},
  tinylabel/.style={font=\scriptsize,text=black!72},
  card/.style={
    draw=black!10,fill=white,rounded corners=2pt,line width=.45pt
  }
]

% (a) The two types of maximal chains.
\draw[panel] (-.10,-.15) rectangle (5.95,6.05);
\node[anchor=west,font=\bfseries] at (.18,5.72)
  {(a) Maximal-chain types};

\coordinate (B1) at (1.20,.58);
\coordinate (Bq) at (1.20,2.35);
\coordinate (BN) at (1.20,4.82);
\coordinate (A1) at (4.55,.58);
\coordinate (Ai) at (4.55,2.72);
\coordinate (AN) at (4.55,4.82);

% The two ambient chains and the two maximal-chain types.
\draw[fullb] (B1)--(BN);
\draw[chain] (A1)--(AN);
\draw[selected] (B1)--(Bq)--(Ai)--(AN);

\draw[orange!85!black,line width=.8pt]
  (.77,.58)--(.57,.58)--(.57,4.82)--(.77,4.82);
\node[font=\scriptsize,text=orange!85!black,rotate=90]
  at (.27,2.70) {full $B$-chain};

\node[selectedpoint,label={[tinylabel]below:$B_1$}] at (B1) {};
\node[selectedpoint,label={[tinylabel]left:$B_q$}] at (Bq) {};
\node[posetpoint,label={[tinylabel]above:$B_N$}] at (BN) {};
\node[posetpoint,label={[tinylabel]below:$A_1$}] at (A1) {};
\node[selectedpoint,label={[tinylabel]right:$A_i$}] at (Ai) {};
\node[selectedpoint,label={[tinylabel]right:$A_N$}] at (AN) {};

\node[fill=black!1,inner sep=1pt,text=black!45] at (1.20,1.43) {$\vdots$};
\node[fill=black!1,inner sep=1pt,text=black!45] at (1.20,3.55) {$\vdots$};
\node[fill=black!1,inner sep=1pt,text=black!45] at (4.55,1.48) {$\vdots$};
\node[fill=black!1,inner sep=1pt,text=black!45] at (4.55,3.70) {$\vdots$};

\node[
  draw=blue!30,fill=blue!4,rounded corners=1.5pt,
  inner xsep=4pt,inner ysep=2pt,font=\scriptsize,
  align=center,text=blue!62!black
] at (2.86,2.38) {consecutive\\in $C_i$};
\node[font=\scriptsize,text=blue!62!black,anchor=west]
  at (1.82,5.24) {$C_i:\ B_1,\ldots,B_q,A_i,\ldots,A_N$};
\node[tinylabel,anchor=west] at (1.82,4.92)
  {$q=\ell_i$};

% (b) The corresponding expected-rank spacings.
\draw[panel] (6.25,-.15) rectangle (15.95,6.05);
\node[anchor=west,font=\bfseries] at (6.55,5.72)
  {(b) The two forced spacings};

% The switch spacing in C_i.
\draw[card] (6.52,3.10) rectangle (15.68,5.40);
\node[anchor=west,font=\scriptsize\bfseries,text=blue!62!black]
  at (6.78,5.10) {mixed maximal chain $C_i$};
\draw[black!25,line width=.65pt] (6.92,3.83)--(15.28,3.83);
\draw[gapline] (9.64,3.83)--(12.30,3.83);

\draw[endpoint] (6.92,3.69)--(6.92,3.97);
\node[tinylabel,below] at (6.92,3.68) {$0$};
\node[rankpoint] at (7.54,3.83) {};
\node[tinylabel,below] at (7.54,3.68) {$h(B_1)$};
\node[text=black!45] at (8.57,3.83) {$\cdots$};
\node[rankpoint,fill=blue!62!black] at (9.64,3.83) {};
\node[tinylabel,below] at (9.64,3.68) {$h(B_q)$};
\node[rankpoint,fill=blue!62!black] at (12.30,3.83) {};
\node[tinylabel,below] at (12.30,3.68) {$h(A_i)$};
\node[text=black!45] at (13.25,3.83) {$\cdots$};
\node[rankpoint] at (14.08,3.83) {};
\node[tinylabel,below] at (14.08,3.68) {$h(A_N)$};
\draw[endpoint] (15.28,3.69)--(15.28,3.97);
\node[tinylabel,below] at (15.28,3.68) {$2N+1$};

\draw[<->,draw=blue!62!black,line width=.9pt]
  (9.64,4.42)--(12.30,4.42);
\node[
  fill=white,inner xsep=4pt,font=\scriptsize,
  text=blue!62!black
] at (10.97,4.42)
  {$G_i=h(A_i)-h(B_{\ell_i})$};

% The terminal spacing in the full B-chain.
\draw[card] (6.52,.35) rectangle (15.68,2.74);
\node[anchor=west,font=\scriptsize\bfseries,text=orange!85!black]
  at (6.78,2.43) {full $B$-chain};
\draw[black!25,line width=.65pt] (6.92,1.08)--(15.28,1.08);
\draw[endgap] (11.45,1.08)--(15.28,1.08);

\draw[endpoint] (6.92,.94)--(6.92,1.22);
\node[tinylabel,below] at (6.92,.93) {$0$};
\node[bendpoint] at (7.54,1.08) {};
\node[tinylabel,below] at (7.54,.93) {$h(B_1)$};
\node[text=black!45] at (9.20,1.08) {$\cdots$};
\node[bendpoint,fill=orange!85!black] at (11.45,1.08) {};
\node[tinylabel,below] at (11.45,.93) {$h(B_N)$};
\draw[endpoint] (14.45,.94)--(14.45,1.22);
\node[tinylabel,above,fill=white,inner sep=1pt]
  at (14.45,1.67) {$2N$};
\draw[endpoint] (15.28,.94)--(15.28,1.22);
\node[tinylabel,below] at (15.28,.93) {$2N+1$};

\draw[<->,draw=orange!85!black,line width=.8pt]
  (11.45,1.61)--(14.45,1.61);
\node[fill=white,inner xsep=3pt,font=\scriptsize,
  text=orange!85!black] at (12.95,1.61) {$\E[X_N]$};
\draw[<->,draw=orange!85!black,line width=.8pt]
  (14.45,1.61)--(15.28,1.61);
\node[fill=white,inner xsep=1pt,font=\scriptsize,
  text=orange!85!black] at (14.865,1.61) {$1$};
\node[font=\scriptsize,text=orange!85!black]
  at (13.365,2.08) {$2N+1-h(B_N)=1+\E[X_N]$};

\node[font=\scriptsize,text=black!55,anchor=south]
  at (11.10,-.03) {expected-rank distances; schematic, not to scale};
\end{tikzpicture}

\caption{The two types of maximal chains and their relevant
spacings: $G_i=h(A_i)-h(B_{\ell_i})$ for $C_i$, and
$1+\E[X_N]=2N+1-h(B_N)$ for the full $B$-chain.}
\label{fig:q122-maximal-chain-spacings}
\end{figure}

\begin{lemma}
\label{lem:q122-maxchains}
\begin{equation}
 \min_{\varnothing\ne C\text{ a maximal chain}}\gap_P(C)
 \ge
 \min\left\{1+\E[X_N],\
 \min_{\substack{1\le i\le N\\ i=1\text{ or }\ell_{i-1}<\ell_i}}G_i
 \right\}.
 \label{eq:q122-maxchains}
\end{equation}
\end{lemma}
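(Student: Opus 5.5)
First I will classify the maximal chains of $P$, and then for each type I will pick out one consecutive pair in the list of Definition~\ref{def:chain-gap} whose spacing equals the corresponding term on the right of \eqref{eq:q122-maxchains}.

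\textbf{Step 1: classification.} Let $C$ be a maximal chain. Since $P$ is the union of the chains $A$ and $B$, the set $C\cap B$ is an initial or arbitrary set of $B$-elements and $C\cap A$ is a set of $A$-elements. The only cross-relations are $B_j<A_i$, so every $B$-element of $C$ lies below every $A$-element of $C$. Maximality then forces $C\cap B=\{B_1,\ldots,B_q\}$ and $C\cap A=\{A_i,\ldots,A_N\}$ for some $q\ge0$ and $i$, with no gaps inside either block. There are two cases.

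If $C\cap A=\varnothing$, then $C$ is the full $B$-chain; indeed $B_N$ can be added to any initial segment of $B$ unless an $A$-element is present.

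If $C\cap A\ne\varnothing$ with least element $A_i$, maximality forces $q=\ell_i$. The elements $B_j$ with $j\le\ell_i$ are exactly those below $A_i$, so $q\le\ell_i$, and $B_{\ell_i}$ could otherwise be added. It also forces $i$ to be minimal among the indices $i'$ with $\ell_{i'}=\ell_i$. Otherwise $A_{i-1}$ could be added, because $B_j<A_{i-1}$ for $j\le\ell_{i-1}=\ell_i$ and $A_{i-1}<A_i$. So $C=C_i$ with $i=1$ or $\ell_{i-1}<\ell_i$. Note also that $\ell_1\ge1$, so $q\ge1$ and $B_{\ell_i}$ exists.

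\textbf{Step 2: the mixed chains.} By the description of $C_i$, the elements $B_{\ell_i}$ and $A_i$ are consecutive in the chain. Expected ranks are strictly increasing along a chain, so $h(B_{\ell_i})$ and $h(A_i)$ are consecutive terms in the increasing list. Hence $\gap_P(C_i)\ge G_i$.

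\textbf{Step 3: the full $B$-chain.} The last spacing in the list is $2N+1-h(B_N)$. Every element after $B_N$ in $f$ is an $A$-element with $K_i=N$. The number of these is $X_N$, so $f(B_N)=2N-X_N$. Taking expectations gives $2N+1-h(B_N)=1+\E[X_N]$.

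Combining Steps 1–3 proves \eqref{eq:q122-maxchains}. I expect no real obstacle here. The only step needing care is the maximality argument in Step~1, which pins down $q=\ell_i$ and the condition $\ell_{i-1}<\ell_i$.
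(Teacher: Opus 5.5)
Your proposal is correct and follows the same route as the paper. You classify the maximal chains as the full $B$-chain or a mixed chain $C_i$ with $i=1$ or $\ell_{i-1}<\ell_i$, then read off the terminal spacing $2N+1-h(B_N)=1+\E[X_N]$ and the switch spacing $G_i$ between the consecutive elements $B_{\ell_i}$ and $A_i$; your maximality argument in Step~1 makes explicit the classification that the paper asserts only through its figure.
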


\begin{proof}
As shown in \cref{fig:q122-maximal-chain-spacings}, the maximal chains are
the full $B$-chain and the mixed chains $C_i$ from
\cref{def:q122-coordinates}, where $i=1$ or
$\ell_{i-1}<\ell_i$.
The terminal spacing of the full $B$-chain is
$2N+1-h(B_N)=1+\E[X_N]$, while $B_{\ell_i}$ and $A_i$ are consecutive in
$C_i$ and leave the spacing $G_i$.
\end{proof}

We now state the constraints on the coordinates from
\cref{def:q122-coordinates} and the rank identities used to estimate the
two quantities in \eqref{eq:q122-maxchains}.
Because $(K_i)$ is nondecreasing, it is determined by the numbers
$(X_j)$.  The wall inequalities $K_i\ge\ell_i$ are equivalent to
\begin{equation}
 X_j\in\mathbb Z_{\ge0},\qquad \sum_{j=0}^NX_j=N,\qquad
 M_j\le c_j\quad(0\le j\le N).                         \label{eq:q122-feasible}
\end{equation}

\begin{figure}[!ht]
\centering
\begin{tikzpicture}[
  font=\small,
  panel/.style={
    draw=black!18,fill=black!1,rounded corners=3pt,line width=.55pt
  },
  lattice/.style={draw=black!9,line width=.35pt},
  axis/.style={draw=black!55,line width=.65pt,->},
  path/.style={
    draw=black!72,line width=1.45pt,
    line cap=round,line join=round
  },
  wall/.style={
    draw=green!52!black,line width=1.25pt,
    line cap=round
  },
  kguide/.style={
    draw=blue!62!black,densely dotted,line width=.65pt
  },
  mguide/.style={
    draw=orange!85!black,densely dotted,line width=.65pt
  },
  wallguide/.style={
    draw=black!38,densely dotted,line width=.65pt
  },
  astep/.style={
    draw=blue!62!black,line width=2.6pt,line cap=round
  },
  bstep/.style={
    draw=orange!85!black,line width=2.6pt,line cap=round
  }
]

\draw[panel] (-.10,-.62) rectangle (6.05,4.40);
\node[anchor=west,font=\bfseries] at (.18,4.08)
  {Constrained lattice path};

\begin{scope}[shift={(1.05cm,.13cm)},x=.46cm,y=.46cm]
  \draw[lattice] (0,0) grid (7,7);
  \draw[axis] (0,0)--(7.45,0)
    node[right,font=\scriptsize] {$A$-count};
  \draw[axis] (0,0)--(0,7.45)
    node[above,font=\scriptsize] {$B$-count};

  % The curved wall represents the discrete lower wall directly.
  \draw[wall]
    plot[domain=0:7,samples=80,smooth]
      (\x,{7*sqrt(\x/8)});

  % A feasible extension path visibly above the wall:
  % (K_1,\ldots,K_7)=(3,4,5,6,6,7,7).
  \draw[path]
    (0,0)--(0,3)--(1,3)--(1,4)--(2,4)--(2,5)
    --(3,5)--(3,6)--(5,6)--(5,7)--(7,7);

  % Here i=j=4.  The wall height ell_i is read at x=i, whereas
  % the inverse-wall coordinate c_j is read where the wall has height j.
  % For the displayed curve these are distinct points:
  % (i,ell_i)=(4,7/sqrt 2) and (c_j,j)=(128/49,4).
  \draw[wallguide] (0,4.95)--(4,4.95)--(4,0);
  \draw[wallguide] (0,4)--(2.61,4)--(2.61,0);
  \draw[kguide] (0,6)--(4,6);
  \draw[mguide] (2,0)--(2,4);
  \fill[green!52!black] (4,4.95) circle (1.6pt);
  \fill[green!52!black] (2.61,4) circle (1.6pt);
  \draw[astep] (3.06,6)--(3.94,6);
  \draw[bstep] (2,4.06)--(2,4.94);

  \node[font=\scriptsize,text=blue!62!black,above]
    at (3.50,6) {$A_i$};
  \node[font=\scriptsize,text=orange!85!black,left]
    at (2,4.50) {$B_{j+1}$};
  \node[font=\scriptsize,text=blue!62!black,left]
    at (0,6) {$K_i$};
  \node[font=\scriptsize,text=green!52!black,left]
    at (0,4.95) {$\ell_i$};
  \node[font=\scriptsize,text=black!58,left]
    at (0,4) {$j$};
  \node[font=\scriptsize,text=orange!85!black,below]
    at (2,0) {$M_j$};
  \node[font=\scriptsize,text=green!52!black,below]
    at (2.61,0) {$c_j$};
  \node[font=\scriptsize,text=black!58,below]
    at (4,0) {$i$};
  \node[font=\scriptsize,text=green!52!black,fill=white,
    inner sep=1pt] at (5.20,5.64) {wall};
  \node[font=\scriptsize,text=black!72,fill=white,
    inner sep=1pt] at (5.72,6.87) {extension path};
  \node[font=\scriptsize,text=black!60,below left] at (0,0) {$0$};
  \node[font=\scriptsize,text=black!60,below] at (7,0) {$N$};
  \node[font=\scriptsize,text=black!60,left] at (0,7) {$N$};
\end{scope}

\draw[panel] (6.28,-.62) rectangle (15.85,4.40);
\node[anchor=west,font=\bfseries] at (6.56,4.08)
  {Two equivalent path representations};
\node[anchor=west,font=\scriptsize,text=black!62]
  at (6.56,3.62)
  {horizontal step $=A$, \qquad vertical step $=B$};

\draw[blue!62!black,line width=2.2pt,line cap=round]
  (6.68,2.95)--(7.02,2.95);
\node[anchor=west] at (6.94,2.95)
  {$K_i=\#\{B\text{-steps before }A_i\}$};

\draw[orange!85!black,line width=2.2pt,line cap=round]
  (6.69,2.16)--(6.69,2.50);
\node[anchor=west] at (6.94,2.33)
  {$M_j=\#\{A\text{-steps before }B_{j+1}\}$};

\draw[black!12,line width=.5pt] (6.58,1.90)--(15.55,1.90);
\node[anchor=west,font=\scriptsize,text=black!55]
  at (6.62,1.60) {equivalent identities};
\node at (11.10,1.10)
  {$\displaystyle
    K_i\le j\ \Longleftrightarrow\ i\le M_j,
    \qquad
    M_j=\#\{i:K_i\le j\}$};

\draw[
  draw=blue!20,fill=blue!3,
  rounded corners=2pt,line width=.5pt
] (6.58,-.32) rectangle (15.55,.43);
\node at (11.065,.055)
  {$\displaystyle
    K_i\ge\ell_i\ \ (1\le i\le N)
    \quad\Longleftrightarrow\quad
    M_j\le c_j\ \ (0\le j\le N)$};
\end{tikzpicture}

\caption{The dual coordinate descriptions of one linear extension.
In the left panel, the green curve schematically represents the wall, and the black
lattice path is a feasible extension path lying weakly northwest of it.
The two wall coordinates are read at different points: $\ell_i$ is the
wall height at $i$, whereas $c_j$ is its horizontal coordinate at height
$j$.  The marked steps illustrate $\ell_i\le K_i$ and $M_j\le c_j$.}
\label{fig:q122-dual-coordinates}
\end{figure}

The correspondence between height sequences satisfying \eqref{eq:q122-K}
and vectors satisfying \eqref{eq:q122-feasible} is bijective.  Therefore
the feasible vectors $(X_0,\ldots,X_N)$ are also uniform.

For every linear extension,
\begin{equation}
 f(A_i)=i+K_i,
 \qquad
 f(B_j)=j+M_{j-1}.                                     \label{eq:q122-ranks}
\end{equation}
Indeed, the elements preceding $A_i$ are $A_1,\ldots,A_{i-1}$ and
$K_i$ elements of the $B$-chain.  Likewise, exactly $M_{j-1}$ elements
of the $A$-chain precede $B_j$.  Taking expectations gives
\begin{equation}
 h(A_i)=i+\E[K_i],
 \qquad
 h(B_j)=j+\E[M_{j-1}].                                  \label{eq:q122-means}
\end{equation}

\begin{lemma}
\label{lem:q122-Gidentity}
For $1\le i\le N$ and $q=\ell_i$,
\begin{equation}
 G_i=i+\E[K_i]-q-\E[M_{q-1}]
 =1+\E\left[(K_i-q)+\#\{t<i:K_t\ge q\}\right].
 \label{eq:q122-Gidentity}
\end{equation}
\end{lemma}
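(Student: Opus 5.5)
The first equality follows by subtracting the two formulas in \eqref{eq:q122-means}, once with index $i$ for $A_i$ and once with index $q=\ell_i$ for $B_q$:
\[
 G_i=h(A_i)-h(B_q)=i+\E[K_i]-q-\E[M_{q-1}].
\]
Since $\ell_1\ge1$, we have $q\ge1$, so $B_q$ exists and $M_{q-1}$ is defined. The second equality is then a pointwise identity inside the expectation, which I prove for each feasible height sequence.

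Fix a linear extension. By the duality in \cref{fig:q122-dual-coordinates}, $M_{q-1}=\#\{t:K_t\le q-1\}=\#\{t:K_t<q\}$. I split this count according to whether $t<i$ or $t\ge i$. For $t\ge i$ we have $K_t\ge K_i\ge\ell_i=q$ by monotonicity in \eqref{eq:q122-K}, so no index $t\ge i$ contributes. Hence
\[
 M_{q-1}=\#\{t<i:K_t<q\}=(i-1)-\#\{t<i:K_t\ge q\}.
\]
Substituting gives
\[
 i+K_i-q-M_{q-1}=1+(K_i-q)+\#\{t<i:K_t\ge q\}
\]
pointwise. Taking expectations yields \eqref{eq:q122-Gidentity}.

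There is no real obstacle here. The only point needing care is the observation that indices $t\ge i$ never contribute to $M_{q-1}$, and this is exactly where the wall constraint $K_i\ge\ell_i$ is used. It also shows that both bracketed terms are nonnegative, so $G_i\ge1$ automatically.
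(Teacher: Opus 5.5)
Your proof is correct and follows essentially the same route as the paper: the first equality comes from \eqref{eq:q122-means}, and the second from the pointwise identity $M_{q-1}=\#\{t<i:K_t<q\}=(i-1)-\#\{t<i:K_t\ge q\}$. The only cosmetic difference is that you justify $K_t\ge q$ for $t\ge i$ via $K_t\ge K_i\ge\ell_i$, whereas the paper uses $K_t\ge\ell_t\ge\ell_i$; both are valid.
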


\begin{proof}
The first equality follows from \eqref{eq:q122-means} and
$G_i=h(A_i)-h(B_q)$.  Since $K_t\ge\ell_t\ge q$ for every $t\ge i$,
\[
 M_{q-1}=\#\{t<i:K_t<q\}.
\]
Among the indices $t<i$, the remaining
$i-1-M_{q-1}$ satisfy $K_t\ge q$.  Substitution gives the second
equality.  The two terms inside the expectation count, respectively,
the additional $B$-elements and the earlier $A$-elements lying strictly
between $B_q$ and $A_i$.
\end{proof}

\begin{proposition}
\label{prop:q122-clearance}
For every sufficiently large integer $r$, there is $N_0(r)$ such that
for every $N\ge N_0(r)$,
\[
 \min\left\{1+\E[X_N],\
 \min_{1\le i\le N}G_i
 \right\}
 \ge \frac r5.
\]
\end{proposition}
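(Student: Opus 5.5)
We bound the two quantities separately. By \cref{lem:q122-Gidentity} each $G_i$ is $1$ plus an expected nonnegative integer, so it suffices to exhibit, uniformly in $i$, an expected excess of order $r$. The common tool is a comparison between the uniform path above the wall and a local random walk. Use the coordinates $(X_j)$ from \eqref{eq:q122-feasible}: near a given height $j$, the conditional law of the path given everything outside a window is uniform on paths in that window respecting $M_j\le c_j$. Locally the constraint is a line of slope $\varphi'$. Write $s(x)=\varphi'(x)$ for the number of $B$-steps per $A$-step along the wall. We have $s(x)=\frac{N}{r(N+1)}(x/(N+1))^{1/r-1}$, which decreases from about $N^{1-1/r}/r$ near $x=1$ to about $1/r$ near $x=N$.

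\textbf{Terminal spacing.} For $1+\E[X_N]$, count directly. The last wall heights satisfy $\ell_i<N$ for $i\le N$, and $\varphi(N+1)-\varphi(x)\approx (N+1-x)/r$. So the final $\sim r$ columns have wall strictly below $N$ by only $O(1)$, and there is room at the top. We compare the number of feasible paths with $X_N=k$ against those with $X_N=k+1$. There is an injection that moves one $A$-step from height $N-1$ or lower to height $N$ (delete the last $A$-step before $B_N$ and append one after). The reverse direction is constrained by the wall, whose slope is only $1/r$ there. Hence the ratio $\PP(X_N=k+1)/\PP(X_N=k)$ stays bounded below by a constant close to $1$ while $k\lesssim r$, giving $\E[X_N]\ge r/5$. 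Equivalently, near $x=N$ the steps look like a walk with $A$-steps far more frequent than $B$-steps relative to the wall slope $1/r$. The path must end at $(N,N)$, and it accumulates about $r$ excess horizontal steps at the final height.

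\textbf{Switch spacings.} Here we split by the local slope $s=s(i)$, as the introduction indicates.

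\emph{Slow regime} ($s\le 1/r^{1/2}$, say). The wall stays at height $q=\ell_i$ for about $1/s\ge \sqrt r$ consecutive indices, or at least up to the preceding run. The term $\#\{t<i:K_t\ge q\}$ then counts $A$-steps at heights at least $q$ taken before column $i$. A local comparison with a free walk pinned above a nearly flat wall shows this is order $1/s$, but we want order $r$. So the slow regime should instead be $s\le 5/r$. There, runs have length at least $r/5$, and the path, which cannot go below height $q$ within the run, contributes at least (run length $\times$ probability of staying at the wall) $\gtrsim r/5$. Indices $i$ at the start of a run are exactly those where mixed chains switch. For those, the earlier-$A$ count involves the previous run, so we must use $K_i-q$ instead: being at the start of a new run forces a jump.

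\emph{Fast regime} ($s\ge r$). The path above a steep wall must clear it by roughly $s$ per step, so $\E[K_i-q]\gtrsim s$ by a one-step comparison: $K_i\ge \ell_{i+1}-O(1)$ fails only with small probability.

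\emph{Intermediate regime} ($5/r\le s\le r$). This is the main obstacle. Over a window of length $w\gg r^2$ the wall is within $O(1)$ of a straight line, because $\varphi''$ is small relative to $\varphi'$ away from the endpoints once $N\ge N_0(r)$. The path then behaves like a random walk conditioned to stay above a line. The clearance of such a positive walk at a typical point is of order $\sqrt{\text{distance to the window ends}}$. This is the midpoint estimate, taken from the positive-walk bounds of \textcite{OttVelenik}.

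The difficulty is conditioning on the outside: the boundary values at the window ends are random. I would first prove monotonicity, namely that raising boundary heights stochastically raises the path, via the FKG/lattice-path coupling. This reduces the problem to boundary values on the wall. The Ott--Velenik estimate then gives $\E[K_i-q]\ge c\sqrt w\ge r$ at the window midpoint. Since every $i$ is a midpoint of some admissible window, taking $N_0(r)$ large makes the windows fit everywhere except within $O(r^2)$ of the ends. Those edge indices fall in the fast regime near $i=1$ and in the terminal analysis near $i=N$. The resulting constants are what dictate $r/5$ and the need for $r$ sufficiently large.
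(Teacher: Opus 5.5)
\textbf{Overall.} Your three-regime architecture matches the paper, and so does the intermediate-slope argument: linearize the wall on a window of length $o(\sqrt N)$, use FKG to lower the window's endpoint heights and the wall to an affine bound, then apply the Ott--Velenik midpoint estimate. That part is sound, with one caveat. The midpoint constant depends on $r$ through the slope range, so the window must grow with $N$ (the paper uses $2\lfloor N^{1/3}\rfloor$); $w\gg r^2$ is not enough. The other ingredients have genuine gaps.

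\textbf{Terminal spacing.} The map ``delete the last $A$ before $B_N$ and append it after $B_N$'' is not injective. The image does not record the height below $N$ at which the moved $A$ sat, and different heights can give different feasible preimages. In fact no injection $\{X_N=k\}\to\{X_N=k+1\}$ can exist for all $k<N$. It would give $\PP(X_N=k)\le\PP(X_N=N)=1/e(P)$ for every $k$, which is impossible since $e(P)>N+1$. So your ratio bound for $k\lesssim r$ has no proof, and even granting a ratio $1-O(1/r)$, reaching $r/5$ would need further work.

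The missing idea is softer. Swapping adjacent coordinates $X_j,X_{j+1}$ changes only $M_j$, and decreases it when $X_j>X_{j+1}$. So the swap injects $\{X_j>X_{j+1}\}$ into $\{X_j<X_{j+1}\}$ while preserving $|X_{j+1}-X_j|$, and hence $\E[X_j]$ is nondecreasing in $j$. Then $(N-q)\E[X_N]\ge\sum_{j>q}\E[X_j]=N-\E[M_q]\ge N-c_q$. Taking $q=N-\lfloor N/r\rfloor$ gives $\E[X_N]\ge(1-o(1))\,r\bigl(1-(1-1/r)^r\bigr)$.

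\textbf{Flat regime at run starts.} Only the first index $j$ of each run matters, since $G_i\ge G_j+(i-j)$ for later $i$ in the same run. For these indices you discard the earlier-$A$ term and use $K_i-q$, claiming that a run start ``forces a jump.'' It does not, because in this regime the wall rises by exactly $1$ there.

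For example, take the first index $i$ with $\ell_i=N-1$, so $i\approx N+1-r$. Then $K_i\in\{N-1,N\}$, so $\E[K_i-\ell_i]\le1$, yet $G_i\ge r/5$ must still hold. The bound must come from $\#\{t<i:K_t\ge q\}$, which counts the $A$-steps of the previous plateau (height $q-1$, length $\Delta_q=c_{q-1}-c_{q-2}$) that lie above that plateau.

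The paper shows this term has mean at least $\Delta_q/2$. The number of extensions in which exactly $m$ letters $A$ precede $B_q$ factors as (feasible prefixes to $(m,q-1)$) times (feasible suffixes from $(m,q)$). For $c_{q-2}\le m\le c_{q-1}$ the first factor is constant and the second is nonincreasing. Hence $c_{q-1}-M_{q-1}=\#\{t<i:K_t\ge q\}$ has nondecreasing weights on $\{0,\dots,\Delta_q\}$.

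Your threshold also matters. Since $\Delta_q\approx1/\varphi'$, your cutoff $5/r$ yields only about $r/10$. The paper takes the flat regime to be $\varphi'(i)\le2/r$, where $\Delta_q\ge r/2-2$, and lets the middle-regime argument cover $2/r<\varphi'(i)<8r$. Note too that indices near $N$ are not handled by ``the terminal analysis'': that concerns $X_N$, not $G_i$.

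\textbf{Steep regime (minor).} Your conclusion $\E[K_i-q]\gtrsim s$ is right, but the justification is not. You claim $K_i\ge\ell_{i+1}-O(1)$ fails only with small probability, yet for $i=1$, conditionally on $K_1\le\ell_2$, the height $K_1$ is uniform on $[\ell_1,\ell_2]$. What is true, and suffices, is $\E[K_i-\ell_i]\ge(\ell_{i+1}-\ell_i)/2$. This holds because for $\ell_i\le k\le\ell_{i+1}$ the number of feasible suffixes from $(i,k)$ is constant, while the number of feasible prefixes to $(i-1,k)$ is nondecreasing in $k$.
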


\begin{proof}[Proof of \cref{thm:q122-main} assuming
\cref{prop:q122-clearance}]
Choose an integer $r>5L$ for which \cref{prop:q122-clearance} holds, and
then choose $N\ge N_0(r)$.  By construction, $\width(P)=2$.  Every
nonempty chain $C\subseteq P$ extends to a maximal chain, and adding
elements only subdivides expected-rank spacings.  Hence
\cref{lem:q122-maxchains,prop:q122-clearance} give
\[
 \gap_P(C)\ge r/5>L.
\]
\end{proof}

\subsection{Elementary estimates}

The following lemmas concern the uniform feasible vectors in
\eqref{eq:q122-feasible}.  The first two control the final spacing
$1+\E[X_N]$ of the full $B$-chain; the third controls the two terms in
\eqref{eq:q122-Gidentity}.
Recall from \cref{def:q122-coordinates} that the wall-count function is
\begin{equation*}
 c_s=\#\{t:1\le t\le N,\ \ell_t\le s\}
 \quad(0\le s\le N),
 \qquad c_s=0\quad(s<0).
\end{equation*}

\begin{lemma}[Terminal-spacing estimates]\label{lem:q122-terminal-estimates}
For the uniform model \eqref{eq:q122-feasible},
\begin{align}
 \E[X_0]&\le\E[X_1]\le\cdots\le\E[X_N],                \label{eq:q122-monotone}\\
 (N-q)\E[X_N]&\ge N-c_q\qquad(0\le q<N).              \label{eq:q122-XNbound}
\end{align}
\end{lemma}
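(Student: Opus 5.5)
We prove the two displays in order; the second follows quickly from the first once the wall constraint is fed in.

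\emph{Monotonicity \eqref{eq:q122-monotone}.} Fix $0\le j<N$. We compare $X_j$ and $X_{j+1}$ with a swap map on feasible vectors. For a feasible vector $x=(x_0,\ldots,x_N)$ with $x_j>x_{j+1}$, let $x'$ be obtained by exchanging the coordinates $j$ and $j+1$.

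First we check that $x'$ is feasible in the sense of \eqref{eq:q122-feasible}. The entries remain nonnegative integers and the total remains $N$. Every prefix sum $M_s$ with $s\ne j$ is unchanged. The only changed prefix sum is
\[
 M_j'=M_{j-1}+x_{j+1}<M_{j-1}+x_j=M_j\le c_j .
\]
Hence $x'$ is feasible and satisfies $x'_j<x'_{j+1}$.

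The map $x\mapsto x'$ is injective, since swapping again recovers $x$. It therefore injects $\{X_j>X_{j+1}\}$ into $\{X_j<X_{j+1}\}$, and it preserves the absolute difference $|X_j-X_{j+1}|$. Since the feasible vectors are uniform, we obtain
\[
 \E\bigl[(X_j-X_{j+1})_+\bigr]\le\E\bigl[(X_{j+1}-X_j)_+\bigr].
\]
This is exactly $\E[X_j]\le\E[X_{j+1}]$. Chaining over $j$ gives \eqref{eq:q122-monotone}.

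\emph{Terminal bound \eqref{eq:q122-XNbound}.} Fix $0\le q<N$. For every feasible vector the wall constraint gives
\[
 \sum_{j=q+1}^N X_j=N-M_q\ge N-c_q .
\]
Taking expectations yields $\sum_{j=q+1}^N\E[X_j]\ge N-c_q$. By \eqref{eq:q122-monotone}, each of these $N-q$ terms is at most $\E[X_N]$, so
\[
 (N-q)\,\E[X_N]\ge N-c_q .
\]

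The main step is the feasibility check for the swap, and it holds precisely because we only swap when the larger value sits earlier. Moving that larger value later can only decrease the one affected prefix sum, so the constraint $M_j\le c_j$ is preserved.
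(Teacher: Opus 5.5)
Your proof is correct and follows the paper's argument essentially step for step: the same swap of adjacent coordinates $X_j,X_{j+1}$ as an injection from $\{X_j>X_{j+1}\}$ into $\{X_j<X_{j+1}\}$ (only $M_j$ changes, and it decreases). The terminal bound then comes, as in the paper, from $\sum_{j=q+1}^N\E[X_j]=N-\E[M_q]\ge N-c_q$ together with the monotonicity.
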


\begin{proof}
Interchanging coordinates $X_j$ and $X_{j+1}$ defines an injection from
the feasible vectors with $X_j>X_{j+1}$ into those with
$X_j<X_{j+1}$.  Indeed, only $M_j$ changes, and it decreases; moreover,
the injection preserves $|X_{j+1}-X_j|$.  Therefore the sum of
$X_{j+1}-X_j$ over all feasible vectors is nonnegative, proving
\eqref{eq:q122-monotone}.  Consequently,
\[
 (N-q)\E[X_N]
 \ge\sum_{j=q+1}^N\E[X_j]
 =N-\E[M_q]
 \ge N-c_q,
\]
where the final inequality follows because $M_q\le c_q$ for every
feasible vector.  This proves \eqref{eq:q122-XNbound}.
\end{proof}

\begin{lemma}
\label{lem:q122-endpoint-asymptotics}
For $0\le q\le N$,
\begin{equation}
 c_q=\min\left\{N,
 \left\lceil (N+1)\left(\frac{q+1}{N}\right)^r\right\rceil-1
 \right\}.                                               \label{eq:q122-inverse}
\end{equation}
Moreover, for fixed $r$,
\begin{equation}
 \liminf_{N\to\infty}\frac{\E[X_N]}{r}
 \ge 1-\left(1-\frac1r\right)^r>1-e^{-1}.               \label{eq:q122-XNasymp}
\end{equation}
\end{lemma}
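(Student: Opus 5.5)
The plan has two parts: first compute $c_q$ exactly from the definition of the wall, then substitute into \eqref{eq:q122-XNbound} with a well-chosen $q$.

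\textbf{Step 1: the formula \eqref{eq:q122-inverse}.} By definition, $\ell_t\le q$ holds iff $\lfloor\varphi(t)\rfloor\le q$. Since $\ell_t$ and $q$ are integers, this is equivalent to $\varphi(t)<q+1$. Because $\varphi$ is strictly increasing, this in turn means
\[
 t<(N+1)\left(\frac{q+1}{N}\right)^r .
\]
The number of positive integers $t$ satisfying this strict inequality is $\lceil (N+1)((q+1)/N)^r\rceil-1$. Capping at $N$, because $t$ ranges only over $1\le t\le N$, gives \eqref{eq:q122-inverse}. Two small checks are needed. For $q=N$ the cap is attained. For small $q$ the count can be $0$, which the formula also gives. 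There is also no issue at $t=N+1$, since it is excluded by the cap.

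\textbf{Step 2: the asymptotic bound \eqref{eq:q122-XNasymp}.} Fix $r$ and an integer $k\ge1$, and apply \eqref{eq:q122-XNbound} with $q=N-k$. Using $c_q\le (N+1)((q+1)/N)^r$, we get
\[
 k\,\E[X_N]\ \ge\ N-(N+1)\left(1-\frac{k-1}{N}\right)^r .
\]
For fixed $r$ and $k$, the right side expands as $N\bigl(1-(1-(k-1)/N)^r\bigr)-1+o(1)=r(k-1)-1+o(1)$. That choice is too small, so $q$ must be taken at distance of order $N$ from the top. Instead I take $q=\lfloor N(1-\theta)\rfloor$ with $\theta\in(0,1)$ fixed. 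Then $N-q\sim\theta N$ and $c_q\sim N(1-\theta)^r$, so
\[
 \E[X_N]\ \ge\ \frac{N-c_q}{N-q}\ \longrightarrow\ \frac{1-(1-\theta)^r}{\theta}.
\]
Choosing $\theta=1/r$ gives exactly $\liminf \E[X_N]/r\ge 1-(1-1/r)^r$. The final strict inequality $1-(1-1/r)^r>1-e^{-1}$ is the standard bound $(1-1/r)^r<e^{-1}$, which follows from $\log(1-x)<-x$.

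\textbf{Main obstacle.} Once the scale $q\approx N(1-1/r)$ is identified, the remaining work is bookkeeping. One must verify that the floor in $q$ and the ceiling and cap in $c_q$ contribute only $O(1)$ errors, which vanish after dividing by $N$, so that the limit computation is legitimate.
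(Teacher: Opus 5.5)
Your proposal is correct and is essentially the paper's proof: invert the wall to get the exact count $c_q$, then substitute $q\approx N(1-1/r)$ into \eqref{eq:q122-XNbound}. The paper takes $q=N-\lfloor N/r\rfloor$, which differs from your $\lfloor N(1-1/r)\rfloor$ by at most one. One aside is mistaken, though your proof does not rely on it: $q=N-k$ is not ``too small'', because dividing your bound by $k$ gives $\E[X_N]\ge r-(r+1)/k+o(1)$. Indeed $(1-(1-\theta)^r)/\theta$ increases to $r$ as $\theta\downarrow0$, so choosing $q$ closer to the top gives a stronger bound than $\theta=1/r$, not a weaker one.
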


\begin{proof}
Equation \eqref{eq:q122-inverse} follows by inverting
\eqref{eq:q122-wall}.  Taking $q=N-\lfloor N/r\rfloor$ in
\eqref{eq:q122-XNbound} and using \eqref{eq:q122-inverse} gives
\eqref{eq:q122-XNasymp}.
\end{proof}

\begin{lemma}[Switch-spacing estimates]\label{lem:q122-switch-estimates}
For the uniform model \eqref{eq:q122-feasible},
\begin{equation}
 \E[K_i-\ell_i]\ge\frac{\ell_{i+1}-\ell_i}{2}
 \qquad(1\le i\le N).                                  \label{eq:q122-vertical}
\end{equation}
For every value $q$ taken by $(\ell_t)_{t=1}^N$, let $i$ be the first
index for which $\ell_i=q$, and let
$\Delta_q=c_{q-1}-c_{q-2}=\#\{t:\ell_t=q-1\}$.  Then
\begin{equation}
 \E\left[\#\{t<i:K_t\ge q\}\right]
 \ge\frac{\Delta_q}{2}.                                 \label{eq:q122-horizontal}
\end{equation}
\end{lemma}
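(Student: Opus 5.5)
The plan is to prove both inequalities by the same mechanism used for \eqref{eq:q122-monotone}: compare probabilities of neighbouring configurations through injections, or through an explicit count after conditioning. In each case the goal is to show that a certain probability mass function is monotone, and then use the elementary fact that a nondecreasing pmf on $\{0,\ldots,d\}$ has mean at least $d/2$ (and a nonincreasing one has mean at most $d/2$). Throughout we work with height sequences $(K_t)$ satisfying \eqref{eq:q122-K}, which are uniform.

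\emph{Vertical estimate \eqref{eq:q122-vertical}.} Put $d=\ell_{i+1}-\ell_i$, using the convention $\ell_{N+1}=N$ from \cref{def:q122-poset}. For $\ell_i\le k<\ell_{i+1}$, map a feasible sequence with $K_i=k$ to the sequence obtained by raising $K_i$ to $k+1$ and leaving every other coordinate unchanged.
\begin{itemize}
\item The result is still feasible. We have $K_{i-1}\le k<k+1$. If $i<N$, then $K_{i+1}\ge\ell_{i+1}\ge k+1$. If $i=N$, then $k+1\le N$.
\item The map is injective.
\end{itemize}
Hence $\PP(K_i=k)\le\PP(K_i=k+1)$ for $\ell_i\le k<\ell_{i+1}$. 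Consequently the law of $K_i-\ell_i$ restricted to $\{0,\ldots,d\}$ is nondecreasing, and any remaining mass sits on values larger than $d$. Averaging the restricted part gives a conditional mean of at least $d/2$, and the mass beyond $d$ only raises the mean. This yields $\E[K_i-\ell_i]\ge d/2$.

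\emph{Horizontal estimate \eqref{eq:q122-horizontal}.} If $\Delta_q=0$ there is nothing to prove. Otherwise, since $(\ell_t)$ is nondecreasing and $i$ is the first index with $\ell_i=q$, the indices with $\ell_t=q-1$ form a block $t_0,\ldots,i-1$ of length $\Delta_q$ lying entirely before $i$. Let $J$ be the number of block indices with $K_t=q-1$. Every other block index has $K_t\ge\ell_t=q-1$, hence $K_t\ge q$. Therefore
\[
\#\{t<i:K_t\ge q\}\ge\Delta_q-J,
\]
and it suffices to prove $\E[J]\le\Delta_q/2$.

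To do this, condition on all coordinates outside the block, in particular on $a=K_{t_0-1}$ (or no constraint if $t_0=1$) and on $b=K_i\ge q$.
\begin{itemize}
\item If $a\ge q$, then $J=0$ and the bound is trivial.
\item Otherwise the block is a uniformly random nondecreasing sequence of length $\Delta_q$ with values in $[q-1,b]$, and the constraint from $a$ is automatic. The sequences with exactly $j$ entries equal to $q-1$ correspond to nondecreasing sequences of length $\Delta_q-j$ in $[q,b]$. With $m=b-q+1\ge1$ values available, their number is $\binom{\Delta_q-j+m-1}{m-1}$, which is nonincreasing in $j$.
\end{itemize}
So the conditional law of $J$ on $\{0,\ldots,\Delta_q\}$ is nonincreasing, giving conditional mean at most $\Delta_q/2$. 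Averaging over the conditioning gives $\E[J]\le\Delta_q/2$, and hence \eqref{eq:q122-horizontal}.

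The only delicate point I expect is the bookkeeping in the second part. One must check that the block constraints decouple from the rest once $K_{t_0-1}$ and $K_i$ are fixed, which holds because the only constraints are monotonicity and the wall, and all block walls equal $q-1$. One must also check the edge cases $t_0=1$ and $b=N$; both are harmless.
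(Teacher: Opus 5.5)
Your proof is correct. At its core it uses the same principle as the paper: show that a probability mass function is monotone on a window $\{0,\ldots,d\}$, and conclude that its mean is at least (or, for $J$, at most) $d/2$.

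For \eqref{eq:q122-vertical} your raising map is the paper's argument in different form. The paper writes the number of extensions with $K_i=k$ as $F(i-1,k)T(i,k)$, where $F$ counts feasible prefixes and $T$ counts feasible suffixes of the lattice path. It then combines ``append a $B$-move'' with the fact that $T(i,k)$ is constant in $k$. The composite of these two steps transposes $A_i$ with the immediately following $B_{k+1}$, which is exactly your move $K_i\mapsto K_i+1$.

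For \eqref{eq:q122-horizontal} the bookkeeping genuinely differs. The paper works in the dual coordinate $M_{q-1}$, the number of $A$'s preceding $B_q$. It factors the count as $F(m,q-1)T(m,q)$ and needs two path facts: $F$ is constant for $c_{q-2}\le m\le c_{q-1}$, and $T$ is nonincreasing there. In return it gets the exact identity $c_{q-1}-M_{q-1}=\#\{t<i:K_t\ge q\}$, and both estimates sit in one prefix/suffix framework. You instead stay in the $K$-coordinates, condition on everything outside the plateau block, and count by stars and bars. This is more elementary. It also proves something stronger: $J$ has a nonincreasing law conditionally on every outside configuration, which implies the paper's unconditional monotonicity by mixing. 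The only cost is that you drop pre-block indices with $K_t\ge q$, which is harmless for a lower bound.
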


\begin{proof}
A state $(a,b)$ represents a prefix of the extension word that has used
exactly $a$ letters $A$ and $b$ letters $B$.  Let $F(a,b)$ be the number
of feasible word prefixes from $(0,0)$ to $(a,b)$, and let $T(a,b)$ be
the number of feasible suffixes from $(a,b)$ to $(N,N)$.  The number of
extensions satisfying $K_i=k$ is
$F(i-1,k)T(i,k)$.  On
$\ell_i\le k\le\ell_{i+1}$ the suffix count $T(i,k)$ is constant:
before $B_{\ell_{i+1}}$ appears, the only possible intervening moves are
$B$-moves.  The prefix count $F(i-1,k)$ is nondecreasing, because
appending a $B$-move gives an injection from prefixes ending at height
$k$ to prefixes ending at height $k+1$.  Thus the numbers of extensions
at the offsets
\[
 0,1,\ldots,\ell_{i+1}-\ell_i
\]
are nondecreasing.  Their mean offset is at least half the length of this
interval, and extensions with larger offsets only increase the mean.
This proves \eqref{eq:q122-vertical}.

Finally, consider extensions in which $B_q$ is inserted after exactly
$m$ letters $A$.  Their number is $F(m,q-1)T(m,q)$.  For
$c_{q-2}\le m\le c_{q-1}$ the first factor is constant and the second is
nonincreasing.  For the first assertion, appending $A_{m+1}$ gives a
bijection between prefixes ending at $(m,q-1)$ and at $(m+1,q-1)$:
the inverse deletion is valid because, once more than $c_{q-2}$ letters
$A$ have appeared, $B_{q-1}$ must already have appeared.  The second
assertion follows because prepending the
available letter $A_{m+1}$ injects suffixes from $(m+1,q)$ into suffixes
from $(m,q)$.  Hence the numbers of extensions at the values
\[
 c_{q-1}-m=0,1,\ldots,\Delta_q
\]
are nondecreasing.  The expected value of $c_{q-1}-m$ is therefore at
least $\Delta_q/2$; values $m<c_{q-2}$ only increase this difference.
Since $i=c_{q-1}+1$, this difference is exactly
$\#\{t<i:K_t\ge q\}$, proving \eqref{eq:q122-horizontal}.
\end{proof}

\begin{lemma}
\label{lem:q122-switch-spacing-bound}
If $1\le i\le N$ and either $i=1$ or $\ell_{i-1}<\ell_i$, then
\begin{equation}
 G_i\ge
 1+\frac{\ell_{i+1}-\ell_i}{2}
  +\frac{c_{\ell_i-1}-c_{\ell_i-2}}{2}.
  \label{eq:q122-Gdet}
\end{equation}
\end{lemma}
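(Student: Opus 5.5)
This follows by combining the identity of \cref{lem:q122-Gidentity} with the two estimates of \cref{lem:q122-switch-estimates}. Fix $i$ with $i=1$ or $\ell_{i-1}<\ell_i$, and put $q=\ell_i$. By linearity of expectation, \eqref{eq:q122-Gidentity} gives
\[
 G_i=1+\E[K_i-q]+\E\bigl[\#\{t<i:K_t\ge q\}\bigr].
\]
The first expectation is bounded below by $(\ell_{i+1}-\ell_i)/2$ using \eqref{eq:q122-vertical}. This bound applies for every $1\le i\le N$; when $i=N$ it uses $\ell_{N+1}=N$ from \cref{def:q122-poset}.

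For the second expectation, we apply \eqref{eq:q122-horizontal}. The hypothesis that $i=1$ or $\ell_{i-1}<\ell_i$ says exactly that $i$ is the first index with $\ell_i=q$, because $(\ell_t)$ is nondecreasing. So $q$ is a value taken by the wall and $i$ is its first index, which is the setting of that estimate. It yields
\[
 \E\bigl[\#\{t<i:K_t\ge q\}\bigr]\ge \frac{\Delta_q}{2},
\qquad
 \Delta_q=c_{q-1}-c_{q-2}.
\]
Adding the two bounds gives \eqref{eq:q122-Gdet}.

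The only point that needs care is the edge case $q=1$ (possible when $i=1$ and $\ell_1=1$), since $\Delta_1$ involves $c_{0}$ and $c_{-1}$. Because $\ell_1\ge1$, we have $c_0=0$, and by convention $c_{-1}=0$, so $\Delta_1=0$. The horizontal term is then at least $0$ trivially, as it is a count, so the claimed bound still holds. Beyond checking that these conventions are consistent with the proof of \cref{lem:q122-switch-estimates}, no real obstacle is expected.
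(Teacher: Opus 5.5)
Your proposal is correct and matches the paper's proof: set $q=\ell_i$, observe that the hypothesis makes $i$ the first index with $\ell_i=q$, and substitute \eqref{eq:q122-vertical} and \eqref{eq:q122-horizontal} into \eqref{eq:q122-Gidentity}. Your extra checks for $i=N$ (using $\ell_{N+1}=N$) and for $q=1$ (where $\Delta_1=0$) agree with the paper's conventions and are already covered by \cref{lem:q122-switch-estimates}.
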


\begin{proof}
Put $q=\ell_i$.  Since $(\ell_t)$ is nondecreasing, the hypothesis on
$i$ makes $i$ the first index for which $\ell_i=q$.  Substituting
\eqref{eq:q122-vertical} and \eqref{eq:q122-horizontal} into
\eqref{eq:q122-Gidentity} proves \eqref{eq:q122-Gdet}.
\end{proof}

\subsection{Proof of the maximal-chain spacing bounds}

Before providing the rigorous proof of \cref{prop:q122-clearance}, we
briefly explain the roadmap and intuition behind the proof.  To prove
\cref{prop:q122-clearance}, we need lower bounds for two quantities:
\begin{itemize}
 \item $1+\E[X_N]$: the lower bound is proved in
       \cref{lem:q122-endpoint-asymptotics}.
 \item $G_i$ for every mixed maximal chain, which is a more complicated
       case.
\end{itemize}

To prove a lower bound for $G_i$, we split the analysis into three parts.
Recall that feasible linear extensions correspond to paths constrained to
remain above the lower wall.  We divide the relevant indices according to
the slope of the wall near $A_i$:
\begin{itemize}
 \item The wall is steep near $A_i$: $\varphi'(i)\ge8r$.
       (Steep regime)
 \item The wall has intermediate slope near $A_i$:
       $2/r<\varphi'(i)<8r$.  (Middle regime)
 \item The wall is nearly flat near $A_i$: $\varphi'(i)\le2/r$.
       (Flat regime)
\end{itemize}

The steep and flat cases are captured by two quantities:
\begin{itemize}
 \item $\E[K_i-\ell_i]$ is large when the wall is steep.
 \item $(c_{\ell_i-1}-c_{\ell_i-2})/2$ is large when the wall is flat.
\end{itemize}

In the middle case, the wall is close to a straight line on a window of
length $m$ around $A_i$.  After conditioning on the endpoint heights, a
midpoint estimate shows that a path constrained above this line has
expected clearance at least a positive constant, depending on $r$, times
$\sqrt m$.  This tends to infinity with $N$ and eventually gives the
required lower bound for $\E[K_i-\ell_i]$.

\begin{proof}[Proof of \cref{prop:q122-clearance}]
Fix a sufficiently large integer $r$.  \Cref{lem:q122-endpoint-asymptotics}
already shows, once $N$ is sufficiently large
depending on $r$, that
\begin{equation}
 1+\E[X_N]\ge\frac r5.                                 \label{eq:q122-endpoint}
\end{equation}
It remains to prove $G_i\ge r/5$ for every $1\le i\le N$.  Let $j$ be
the first index with $\ell_j=\ell_i$.  Since $A_j<\cdots<A_i$,
\[
 G_i-G_j=h(A_i)-h(A_j)\ge i-j.
\]
Thus it suffices to consider $j=1$ or $\ell_{j-1}<\ell_j$.  We divide
these indices into the following three cases.

\begin{figure}[!ht]
\centering
\begin{tikzpicture}[
  x=1cm,y=1cm,font=\small,
  panel/.style={
    draw=black!28,rounded corners=2pt,line width=.65pt
  },
  divider/.style={
    draw=black!22,densely dashed,line width=.55pt
  },
  axis/.style={draw=black!55,line width=.7pt,->},
  regime/.style={
    rounded corners=2pt,line width=.55pt,
    minimum width=6.55cm,minimum height=1.35cm,
    text width=5.95cm,align=left,
    inner xsep=8pt,inner ysep=3.5pt
  }
]

% The normalized profile occupies a genuine square.
\begin{scope}[shift={(1cm,.75cm)}]
\begin{scope}[x=5.25cm,y=5.25cm]
  \begin{scope}
    \clip[rounded corners=2pt] (0,0) rectangle (1,1);
    \fill[blue!4]   (0,0) rectangle (.08,1);
    \fill[black!1]  (.08,0) rectangle (.54,1);
    \fill[orange!5] (.54,0) rectangle (1,1);

    \draw[black!6,line width=.35pt]
      (0,.25)--(1,.25) (0,.50)--(1,.50) (0,.75)--(1,.75);
    \draw[black!6,line width=.35pt]
      (.25,0)--(.25,1) (.50,0)--(.50,1) (.75,0)--(.75,1);

    \draw[divider] (.08,0)--(.08,1);
    \draw[divider] (.54,0)--(.54,1);

    \draw[blue!62!black,line width=1.65pt,line cap=round]
      (0,0)
      .. controls (.002,.43) and (.03,.61) .. (.08,.68);
    \draw[black!72,line width=1.65pt,line cap=round]
      (.08,.68)
      .. controls (.18,.80) and (.38,.87) .. (.54,.90);
    \draw[orange!85!black,line width=1.65pt,line cap=round]
      (.54,.90)
      .. controls (.72,.94) and (.90,.985) .. (1,1);
  \end{scope}

  \draw[panel] (0,0) rectangle (1,1);
  \draw[axis] (0,0)--(1.045,0);
  \draw[axis] (0,0)--(0,1.045);

  \node[anchor=north east,font=\scriptsize,text=black!65]
    at (-.012,-.012) {$0$};
  \node[anchor=north,font=\scriptsize,text=black!65]
    at (1,-.012) {$1$};
  \node[anchor=east,font=\scriptsize,text=black!65]
    at (-.012,1) {$1$};

  \node[anchor=north,font=\scriptsize,text=black!70]
    at (.5,-.095) {$i/(N+1)$};
  \node[rotate=90,font=\scriptsize,text=black!70]
    at (-.13,.5) {$\varphi(i)/N$};
\end{scope}
\end{scope}

\node[
  regime,anchor=north west,
  draw=blue!28,fill=blue!3
] at (7.05,6.00) {%
  {\color{blue!62!black}\bfseries
   Steep\hfill $\varphi'(i)\ge 8r$}\\[4pt]
  {\scriptsize Large wall increment:
   $\ell_{i+1}-\ell_i$.}%
};

\node[
  regime,anchor=north west,
  draw=black!20,fill=black!1
] at (7.05,4.15) {%
  {\color{black!72}\bfseries
   Middle\hfill $2/r<\varphi'(i)<8r$}\\[4pt]
  {\scriptsize Large expected difference:
   $\E[K_i-\ell_i]$.}%
};

\node[
  regime,anchor=north west,
  draw=orange!35,fill=orange!4
] at (7.05,2.30) {%
  {\color{orange!85!black}\bfseries
   Flat\hfill $\varphi'(i)\le 2/r$}\\[4pt]
  {\scriptsize Long wall plateau:
   $c_{\ell_i-1}-c_{\ell_i-2}$.}%
};

\end{tikzpicture}
\caption{The normalized profile in $[0,1]^2$ (schematic).  Since
$\varphi'$ decreases, the regimes run from left to right; each box names
the quantity used in that case.}
\label{fig:q122-slope-regimes}
\end{figure}

\smallskip
\noindent
\textbf{Steep regime: $\varphi'(i)\ge8r$.}
Since
\[
 \frac{\varphi'(i+1)}{\varphi'(i)}
 =\left(\frac{i}{i+1}\right)^{(r-1)/r}\ge\frac12,
\]
concavity and the inequalities
$\varphi(t)-1<\ell_t\le\varphi(t)$ give
\[
 \ell_{i+1}-\ell_i\ge4r-1.
\]
\Cref{eq:q122-vertical,eq:q122-Gidentity} therefore give
\begin{equation}
 G_i\ge2r-\tfrac12.                                      \label{eq:q122-steep}
\end{equation}

\smallskip
\noindent
\textbf{Flat regime: $\varphi'(i)\le2/r$.}
Put $q=\ell_i$ and
\[
 g(y)=(N+1)(y/N)^r.
\]
The derivative condition implies, uniformly over all flat indices as
$N\to\infty$ with $r$ fixed,
\begin{equation}
 (q/N)^{r-1}\ge\frac12-o(1).                            \label{eq:q122-flat-q}
\end{equation}
The minimum with $N$ in \eqref{eq:q122-inverse} is attained by its second
entry at $q-1$ and $q-2$ once $N$ is sufficiently large.  Indeed,
$q\le N-1$ and
$g(N-1)=N+1-r+O_r(N^{-1})<N$.  Hence the mean-value theorem and
$\lceil x\rceil-\lceil y\rceil\ge x-y-1$ give
\begin{align}
 c_{q-1}-c_{q-2}
 &=\lceil g(q)\rceil-\lceil g(q-1)\rceil\\
 &\ge \frac r2-2.                                       \label{eq:q122-plateau}
\end{align}
By \cref{lem:q122-switch-spacing-bound}, for all sufficiently large $r$ and then
all sufficiently large $N$,
\begin{equation}
 G_i\ge \frac r5.                                       \label{eq:q122-flat}
\end{equation}

\smallskip
\noindent
\textbf{Middle regime: $2/r<\varphi'(i)<8r$.}
\Cref{prop:q122-middle} below gives
\begin{equation}
 G_i\ge r.                                                \label{eq:q122-middle-G}
\end{equation}

Combining the preceding observation with \eqref{eq:q122-steep},
\eqref{eq:q122-flat}, and \eqref{eq:q122-middle-G}, for every
sufficiently large fixed $r$ and then all sufficiently large $N$
(depending on $r$), we obtain
\begin{equation}
 \min_{1\le i\le N}G_i
 \ge\frac r5.                                           \label{eq:q122-minG}
\end{equation}
The bounds \eqref{eq:q122-minG} and \eqref{eq:q122-endpoint} are exactly
the assertion of \cref{prop:q122-clearance}.
\end{proof}

\begin{proposition}[Middle-regime bound]
\label{prop:q122-middle}
Fix $r\ge2$.  For all sufficiently large $N$, depending on $r$, every
integer $i$ with $1\le i\le N$ satisfying
\[
 \frac2r<\varphi'(i)<8r
\]
also satisfies $G_i\ge r$.
\end{proposition}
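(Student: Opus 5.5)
By \cref{lem:q122-Gidentity} we have $G_i\ge 1+\E[K_i-\ell_i]$, so it suffices to show $\E[K_i-\ell_i]\ge r$ for every middle-regime index $i$, uniformly once $N$ is large depending on $r$. The approach is to localize to a window around $i$ on which the wall is nearly straight with a slope bounded away from $0$ and $\infty$. There we compare the constrained path with a random walk conditioned to stay above a line, and use a midpoint clearance estimate of order $\sqrt m$, where $m$ is the window length.

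\textbf{Step 1: localization and linearization.} Since $2/r<\varphi'(i)<8r$ and $\varphi'(x)=\frac{N}{r(N+1)}(x/(N+1))^{1/r-1}$, the ratio $i/(N+1)$ lies in a fixed compact subinterval of $(0,1]$ depending only on $r$. Hence $\varphi''=O_r(1/N)$ near $i$. Take $m=\lfloor N^{1/2}\rfloor$, or any $m\to\infty$ with $m^2\varphi''\to0$, say $m=N^{1/3}$. On the window $W=[i-m,i+m]$, the wall $\ell_t$ then lies within $O(1)+O_r(m^2/N)=O(1)$ of the line $\varphi(i)+\varphi'(i)(t-i)$.

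\textbf{Step 2: conditioning.} Condition on everything outside the window, that is, on the path up to the $A$-step $i-m$ and from the $A$-step $i+m$ on, so in particular on the endpoint heights $K_{i-m}=a$ and $K_{i+m}=b$. By the Markov property of uniform lattice paths, the conditional law inside is uniform on monotone paths from $(i-m,a)$ to $(i+m,b)$ staying weakly above the wall. Fix the endpoint heights.

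\textbf{Step 3: monotonicity.} Raising the endpoints should only increase $K_i$ stochastically; this follows by a standard coupling or FKG argument for paths above a barrier. So the worst case is $a$ and $b$ as low as possible, namely on the wall. In that case, after the affine shear by the slope $\varphi'(i)\in(2/r,8r)$, the path becomes a bridge with i.i.d.-type increments (geometric vertical steps per horizontal step, with mean equal to the slope). This bridge is conditioned to stay above a barrier that is a straight line up to an $O(1)$ perturbation, over $2m$ steps. The increment variance is bounded above and below in terms of $r$.

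\textbf{Step 4: midpoint clearance.} The main obstacle is here. For a random-walk bridge of length $2m$ conditioned to stay above a line (an $O(1)$-perturbed wall), the height at the midpoint is of order $\sqrt m$ with probability bounded below. I would invoke the positive-walk / entropic-repulsion estimates of \textcite{OttVelenik}: they give $\E[\text{clearance at midpoint}]\ge c_r\sqrt m$, once one handles the $O(1)$ perturbation of the barrier by shifting the line down by a constant. This shift costs only a constant factor in probability, by the standard ballot-type estimates. Care is needed because the walk is not simple, and because the conditioning on a \emph{fixed} endpoint plus the perturbation must be uniform over all middle-regime $i$. The latter holds since the slope ranges over a compact set.

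\textbf{Step 5: conclusion.} Averaging over the conditioning and using Step 3 gives $\E[K_i-\ell_i]\ge c_r\sqrt m-O(1)$. This exceeds $r$ for $N\ge N_0(r)$, and therefore $G_i\ge r$.
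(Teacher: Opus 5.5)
Your proposal is correct and follows essentially the same route as the paper: reduce to $\E[K_i-\ell_i]$, localize to a window of half-width about $N^{1/3}$ where the curvature is $O_r(1/N)$, condition on the window's endpoint heights, use FKG to push those endpoints down to the wall, and apply the Ott--Velenik positive-walk bounds to a geometric-increment bridge whose slope lies in a compact range. The paper's version is slightly cleaner in two respects. It linearizes with the chord $\bar\ell_t$ through the two wall endpoints rather than the tangent. It also absorbs the $O(1)$ wall perturbation with the same FKG monotonicity, now applied to the lower bounds via $\ell_t\ge\bar\ell_t-2$, rather than with your ballot-type probability comparison. The Ott--Velenik lemmas then apply verbatim, with both endpoints at height $\kappa=2$ above a straight barrier.
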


The proof, together with the two probabilistic lemmas it uses, is given
in Appendix~\ref{app:q122-middle}.

\begin{remark}
The particular power function in \eqref{eq:q122-wall} is not essential.
The same proof scheme applies to any increasing concave family of
profiles that is sufficiently steep near $0$, sufficiently flat near
$1$, and locally approximable by a straight line on growing windows in
the intermediate-slope region, provided that the terminal spacing of
the full $B$-chain also grows.  The steep and flat arguments then change
only by constants, while the middle-regime argument applies on each such
window.
\end{remark}

\section{The gap-entropy counterexample}
\label{sec:entropy-counterexample}

\textcite[Conjecture~2.5]{AiresKahn} conjectured that
$\gap(P_j)\to\infty$ implies
$\tau(P_j)\to\infty$ for every sequence of finite
posets.  In this section, we answer \cref{question:gap-entropy}
negatively by constructing a sequence for which $\gap(P_j)\to\infty$
while $\tau(P_j)<3$.

\subsection{Counterexample construction}

We construct our counterexamples using two operations.
\begin{itemize}
    \item Ordinal sum $Q \oplus R$: Every element of $Q$ is less than every element of $R$, and the internal relations within $Q$ and $R$ are retained.
    \item Parallel sum $Q \sqcup R$: Relations inside $Q$ and $R$ are retained, and no further relations are imposed.
\end{itemize}
%For disjoint posets $Q$ and $R$, their \emph{ordinal sum} $Q\oplus R$
%retains their original orders and declares every element of $Q$ smaller
%than every element of $R$.  Their \emph{parallel sum} $Q\sqcup R$ retains
%their original orders and adds no relation between the two posets.
%Repeated summands always denote distinct isomorphic copies. 
Let $C_k$
denote the chain with $k$ elements. 
Begin with
\[
 P_0=C_2.
\]
For each $r\geq1$, choose a positive integer
$L_r$ and define
\begin{equation}\label{eq:construction}
 R_r=C_{L_r}\oplus P_{r-1}\oplus C_{L_r},
 \qquad
 P_r=R_r\sqcup R_r.
\end{equation}
Here and below, in an ordinal sum $C_L\oplus Q\oplus C_L$, we call the
two copies of $C_L$ the lower and upper padding chains.
Put
\[
 q_r=|P_{r-1}|,
 \qquad
 m_r=|R_r|=2L_r+q_r.
\]

The second and third stages are shown schematically in
\cref{fig:recursive-construction}.

\begin{figure}[htbp]
\centering
\begin{tikzpicture}[
  x=.86cm,
  y=.82cm,
  padding/.style={draw=black,line width=1.05pt,line cap=round},
  innerchain/.style={draw=black,line width=.7pt,line cap=round},
  connector/.style={draw=black!70,line width=.55pt},
  copybox/.style={
    draw=black!55,dashed,rounded corners=2pt,line width=.55pt
  },
  module/.style={
    draw=blue!50!black,fill=blue!4,rounded corners=2pt,
    line width=.55pt
  },
  innerbox/.style={
    draw=black!40,dashed,rounded corners=1pt,line width=.4pt
  }
]

% P_2=R_2\sqcup R_2.
\begin{scope}
  \foreach \dx in {-1.35,1.35} {
    \begin{scope}[shift={(\dx,0)}]
      \draw[copybox] (-1.02,.15) rectangle (1.02,6.15);
      \node[fill=white,inner sep=1pt,font=\scriptsize]
        at (0,6.15) {$R_2$};

      % The lower and upper copies of C_{L_2}.
      \draw[padding] (0,.45) -- (0,1.68);
      \draw[padding] (0,4.65) -- (0,5.83);

      % The middle copy P_1=R_1\sqcup R_1.
      \draw[module] (-.78,1.92) rectangle (.78,4.38);
      \node[fill=blue!4,inner sep=1pt,font=\scriptsize]
        at (0,4.38) {$P_1$};

      \draw[connector] (0,1.68) -- (-.38,2.12);
      \draw[connector] (0,1.68) -- ( .38,2.12);
      \draw[innerchain] (-.38,2.12) -- (-.38,4.18);
      \draw[innerchain] ( .38,2.12) -- ( .38,4.18);
      \draw[connector] (-.38,4.18) -- (0,4.65);
      \draw[connector] ( .38,4.18) -- (0,4.65);

      % The two marked points in each R_1 form P_0=C_2.
      \fill (-.38,3.08) circle (1.15pt);
      \fill (-.38,3.35) circle (1.15pt);
      \fill ( .38,3.08) circle (1.15pt);
      \fill ( .38,3.35) circle (1.15pt);
    \end{scope}
  }

  \node at (0,3.15) {$\sqcup$};
  \node[font=\scriptsize,anchor=west] at (-1.22,1.05)
    {$C_{L_2}$};
  \node[font=\scriptsize,anchor=west] at (-1.22,5.25)
    {$C_{L_2}$};
  \node[font=\small] at (0,-.20)
    {$P_2=R_2\sqcup R_2$};
\end{scope}

% P_3=R_3\sqcup R_3.
\begin{scope}[shift={(6.4,0)}]
  \foreach \dx in {-1.45,1.45} {
    \begin{scope}[shift={(\dx,0)}]
      \draw[copybox] (-1.12,.15) rectangle (1.12,6.15);
      \node[fill=white,inner sep=1pt,font=\scriptsize]
        at (0,6.15) {$R_3$};

      % The lower and upper copies of C_{L_3}.
      \draw[padding] (0,.45) -- (0,1.68);
      \draw[padding] (0,4.65) -- (0,5.83);

      % The middle copy P_2=R_2\sqcup R_2.
      \draw[module] (-.92,1.92) rectangle (.92,4.38);
      \node[fill=blue!4,inner sep=1pt,font=\scriptsize]
        at (0,4.38) {$P_2$};

      \draw[connector] (0,1.68) -- (-.45,2.25);
      \draw[connector] (0,1.68) -- ( .45,2.25);
      \draw[connector] (-.45,4.08) -- (0,4.65);
      \draw[connector] ( .45,4.08) -- (0,4.65);

      % The two copies of R_2 inside P_2.
      \foreach \ux in {-.45,.45} {
        \begin{scope}[shift={(\ux,0)}]
          \draw[innerbox] (-.31,2.10) rectangle (.31,4.20);
          \node[fill=blue!4,inner sep=.5pt,font=\tiny]
            at (0,4.16) {$R_2$};

          \draw[innerchain] (0,2.25) -- (0,2.61);
          \draw[connector] (0,2.61) -- (-.13,2.82);
          \draw[connector] (0,2.61) -- ( .13,2.82);
          \draw[innerchain] (-.13,2.82) -- (-.13,3.50);
          \draw[innerchain] ( .13,2.82) -- ( .13,3.50);
          \draw[connector] (-.13,3.50) -- (0,3.70);
          \draw[connector] ( .13,3.50) -- (0,3.70);
          \draw[innerchain] (0,3.70) -- (0,4.08);
        \end{scope}
      }
    \end{scope}
  }

  \node at (0,3.15) {$\sqcup$};
  \node[font=\scriptsize,anchor=west] at (-1.32,1.05)
    {$C_{L_3}$};
  \node[font=\scriptsize,anchor=west] at (-1.32,5.25)
    {$C_{L_3}$};
  \node[font=\small] at (0,-.20)
    {$P_3=R_3\sqcup R_3$};
\end{scope}
\end{tikzpicture}

\caption{The recursive construction of $P_2$ and $P_3$, with order
increasing upward. Lengths are not to scale.}
\label{fig:recursive-construction}
\end{figure}

We answer \cref{question:gap-entropy} negatively by choosing an infinite
sequence of integers $(L_r)_{r\geq1}$ such that the gaps of the resulting
posets $P_r$ tend to infinity while $\tau(P_r)$ remains bounded by a constant.

\begin{theorem}\label{thm:c25-main}
There exists a sequence of positive integers $(L_r)_{r\geq1}$ such that,
for every $r\geq0$,
\begin{equation}\label{eq:mainbounds}
 \tau(P_r)<3,
 \qquad
 \gap(P_r)=\prod_{s=1}^r\frac{2m_s+1}{m_s+1}
            \geq(3/2)^r.
\end{equation}
\end{theorem}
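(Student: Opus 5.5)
We prove the gap formula for every choice of $(L_r)$, and then choose the $L_r$ inductively so that the entropy bound holds.

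\emph{Gap.} We use two structural facts. First, for an ordinal sum $C_L\oplus Q\oplus C_L$ every linear extension is the lower chain, then an extension of $Q$, then the upper chain. Hence the expected ranks of $R_r$ are the chain ranks $1,\dots,L_r$ and $L_r+q_r+1,\dots,m_r$, together with $L_r+h_{P_{r-1}}(x)$. The added chain points have unit spacings, and the endpoint spacings of $P_{r-1}$ (from $0$ and from $q_r+1$) become spacings between the last lower chain point and the first upper chain point. So the gap of $R_r$ equals $\max\{1,\gap(P_{r-1})\}=\gap(P_{r-1})$, using that $\gap(P_{r-1})\ge1$ always holds.

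Second, for $P=R\sqcup R'$ with $|R|=|R'|=m$, a uniform extension is a uniform shuffle of independent uniform extensions. An element of rank $k$ in $R$ is preceded by $k-1$ elements of $R$ and, in expectation, $(k)\cdot m/(m+1)$ elements of $R'$: each element of $R'$ lies in one of $m+1$ slots uniformly, and precedes with probability $k/(m+1)$. Thus the expected rank is $k+km/(m+1)=k(2m+1)/(m+1)$, linear in $k$. By linearity $h_P(x)=h_R(x)\,(2m+1)/(m+1)$, and the endpoint $m+1\mapsto 2m+1$ is mapped to the new endpoint. Both copies give the same list, so merging duplicates creates zero spacings, and every spacing, including the endpoint spacings, is multiplied by $(2m+1)/(m+1)$. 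By induction from $\gap(C_2)=1$ we get the product formula, and each factor is at least $3/2$ since $m_s\ge1$.

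\emph{Entropy.} We prove by induction the stronger statement that for all $X\subseteq P_r$,
\[
H(\boldsymbol\sigma_{P_r}|_X)\le 3|X|-1-\eta_r ,
\]
with slack $\eta_r>0$, and with $\eta_r$ decreasing, for instance $\eta_r=1-\sum_{s\le r}2^{-s-1}$. Since $H\ge0$, the empty set is trivial. For $P_0=C_2$ the entropy is $0$.

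For the inductive step, write $X$ as selected padding elements (count $t$) plus $A_0\subseteq$ first copy of $P_{r-1}$ and $A_1\subseteq$ second copy. The order on $X$ is determined by three pieces of data: the restricted order within each copy's selected $P_{r-1}$-part (independent, uniform restrictions, by induction each at most $3|A_\epsilon|-1-\eta_{r-1}$ when nonempty); the merge pattern of the two selected subsets of $R_r$; and the interleaving inside each copy, which is deterministic. Let $E$ be the event that in the shuffle the two middle blocks $P_{r-1}$ are separated, i.e.\ one lies entirely before the other. On $E$ each nonempty middle block contracts to a single marker. The merge is then a shuffle of two chains of lengths $t_0+[A_0\ne\varnothing]$ and $t_1+[A_1\ne\varnothing]$, constrained by where the markers go, plus a bit for which block goes first.

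Entropy is handled via $H\le H(\mathbf 1_E)+\PP(E)H(\cdot\mid E)+\PP(E^c)H(\cdot\mid E^c)$. On $E^c$ we bound crudely by $\log_2(|X|!)$-type quantities, at most $|X|\log_2|X|$. As $L_r\to\infty$ with $q_r$ fixed, $\PP(E^c)\to0$. So the error $h(\PP(E^c))+\PP(E^c)|X|\log|X|$ must be bounded uniformly in $X$. Here we use that $|X|\le|P_r|$, which is finite for a fixed earlier stage, so the error can be made below $2^{-r-1}$ by taking $L_r$ large. This is the main technical point.

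On $E$ we need the shuffle entropy to be at most $2t+c$, with $c=[A_0,A_1\ne\varnothing]$. A shuffle of chains of lengths $a,b$ has at most $\log_2\binom{a+b}{a}\le a+b$ bits, but conditioning on $E$ the markers merely occupy positions. I expect to bound the merge entropy by encoding, for each of the $t$ padding elements, which copy it comes from and roughly its relative position, costing $2$ bits each, plus one bit for the order of the two markers.

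Adding everything gives $3|A_0|+3|A_1|-2+2t+1$ when both parts are nonempty, which is at most $3|X|-1-\eta_{r-1}$; the case of one nonempty part is similar. Absorbing the errors yields $\eta_r$. Finally, $H<3|X|$ gives $\tau(P_r)<3$.

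The delicate step is proving the $2t+c$ merge bound cleanly. I would first try bounding $\log_2\binom{a+b}{a}\le a+b$ and checking that this is within $2t+c$ after accounting for the markers.
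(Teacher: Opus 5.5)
Your gap computation is the paper's argument and is correct in substance; one imprecision is noted at the end. The entropy step, however, has a genuine gap exactly where you locate the main difficulty. The bound $|X|\le|P_r|$ does not give uniformity, because $|P_r|=2(2L_r+q_r)$ grows with the very parameter $L_r$ you are sending to infinity. Meanwhile the probability that the two middle blocks interleave decays only like $L_r^{-1/2}$: a merge path through the lattice point $(L_r+1,L_r+1)$ already forces interleaving (since $q_r\ge2$), and this has probability $\asymp L_r^{-1/2}$. So for $X$ containing most of the padding, your error term $\PP(E^c)\,|X|\log_2|X|$ is of order $\sqrt{L_r}\log L_r$, not below $2^{-r-1}$, and the proposed choice of $L_r$ is circular. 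Separately, your event $E$ does not justify contracting $A_i$ to a single marker. A selected padding element of the other component can still land between two elements of $A_i$. If $X$ contains an entire padding chain of the other component, this happens with probability bounded away from zero, so the $2t+c$ count on $E$ is unsupported.

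The missing idea is the paper's split according to whether $t\ge a$. If $t\ge a$, no probability is needed: conditional on the internal orders $O=(O_0,O_1)$, the order $Y$ on $X$ is a binary merge of two fixed sequences of total length $a+t$, so $H(Y\mid O)\le a+t\le 2t$. If $t<a$, then $a\le 2q_r$ and $t<2q_r$, so $|X|$ is bounded independently of $L_r$. The good event should then require that the spans $\mathcal B_0,\mathcal B_1$ of the middle blocks are disjoint \emph{and} that no marked padding occurrence of component $i$ lies in $\mathcal B_{1-i}$. A union bound over these fewer than $2q_r$ marks gives failure probability $p=O_{q_r}(L_r^{-1/2})$ uniformly in $X$, so the error $h_2(p)+2q_r p$ is genuinely small. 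On the good event, count merges of the $t+c+1$ contracted items using $\binom nk\le 2^{n-1}$ for $n\ge1$, which gives $t+c$. Your first try $\log_2\binom{a+b}{a}\le a+b$ gives $t+c+1$, which exceeds $2t+c$ when $t=0$; also, the extra bit for which block comes first is already encoded in the merge. Once $H(Y\mid O)\le 2t+c+\eps_r$ with $\eps_r\le 2^{-r-1}$ is in hand, your additive-slack induction $3|X|-1-\eta_r$ does close. For that, state it only for nonempty $X$ (it fails for $X=\varnothing$), and keep the $-2\eta_{r-1}$ terms that your final tally drops. Finally, in the gap half, an individual element of $R'$ does not occupy a uniform slot: for $R'=C_2$, its bottom element's slot has law $(\tfrac12,\tfrac13,\tfrac16)$. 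What holds, and suffices, is that the slot counts of a uniform merge are exchangeable, so the expected number of $R'$-elements in the first $k$ slots is $km/(m+1)$.
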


The exponential growth of the poset gap follows easily from
\cref{prop:gap}.  The more challenging part is proving that the hereditary
entropy is bounded by three.  To prove this, we show that the entropy of an
arbitrary set $X$ in the two copies $R_r\sqcup R_r$ behaves additively and
that we can almost recurse using this property; see \cref{lem:onestep}.
Recursing all the way to the initial instances, we obtain the upper bound
$3|X|$ for the entropy of the linear extension induced on $X$; see
\cref{prop:tau}.

\subsection{The exact gap recurrence}

\begin{lemma}[Duplicate parallel sum]
\label{lem:duplicate-parallel-gap}
Let $S_0,S_1$ be disjoint copies of a nonempty $m$-element poset
$S$, and put $P=S_0\sqcup S_1$.  For $y\in S$ and
$i\in\{0,1\}$, let $y_i$ denote the copy of $y$ in $S_i$.  Then
\begin{equation}\label{eq:normalizedmean}
 \frac{h_P(y_i)}{2m+1}=\frac{h_S(y)}{m+1}
 \qquad(i\in\{0,1\}).
\end{equation}
Consequently,
\begin{equation}\label{eq:parallelgap}
 \gap(S_0\sqcup S_1)=\frac{2m+1}{m+1}\gap(S).
\end{equation}
\end{lemma}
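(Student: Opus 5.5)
We prove \eqref{eq:normalizedmean} by describing a uniform linear extension of $P=S_0\sqcup S_1$ as a shuffle. A linear extension of $P$ is the same as a pair $(\sigma_0,\sigma_1)$ of linear extensions of $S_0$ and $S_1$ together with a choice of which $m$ of the $2m$ positions carry $S_0$. These three choices are independent and uniform: $e(P)=\binom{2m}{m}e(S)^2$. By symmetry it suffices to treat $i=0$.

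Fix $y$ and condition on $f_S(y_0)=k$ in $\sigma_0$. The position of $y_0$ in $P$ is then $k$ plus the number of $S_1$ elements placed before the $k$th $S_0$ element. That number depends only on the uniform random shuffle word in $\{0,1\}^{2m}$ with $m$ symbols of each type. Its expectation is computed by a standard insertion argument. The $m$ elements of $S_1$ fall into the $m+1$ gaps determined by the $S_0$ elements, and by exchangeability each gap receives $m/(m+1)$ elements in expectation. To justify this, note that for each $S_1$ element the number of $S_0$ elements preceding it is equally likely to be each of $0,\dots,m$; equivalently, use the symmetry of the multiset shuffle under cyclic structure, or compute directly from the binomial identity. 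Hence the expected number of $S_1$ elements before the $k$th $S_0$ element is $k\cdot m/(m+1)$. So
\[
 \E\bigl[f(y_0)\mid f_S(y_0)=k\bigr]=k+\frac{km}{m+1}=k\,\frac{2m+1}{m+1}.
\]
Averaging over $k$ gives $h_P(y_0)=h_S(y)(2m+1)/(m+1)$, which is \eqref{eq:normalizedmean}.

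For \eqref{eq:parallelgap}, the multiset of expected ranks of $P$ consists of each value $\frac{2m+1}{m+1}h_S(y)$ taken twice. Appending the endpoints $0$ and $2m+1$, observe that $2m+1=\frac{2m+1}{m+1}(m+1)$, so the augmented list for $P$ is exactly the augmented list $0,s_1,\dots,s_m,m+1$ for $S$ scaled by $\frac{2m+1}{m+1}$, with interior values duplicated. Duplicated values contribute zero spacings, and every other consecutive spacing is the scaled spacing of $S$. Taking the maximum gives \eqref{eq:parallelgap}.

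The only point needing care is the gap-occupancy claim. We verify it concretely: the number of $S_1$ elements before the $k$th $S_0$ element is $\sum_{z\in S_1}\mathbf 1\{z \text{ precedes the }k\text{th }S_0\}$. For a fixed $S_1$ element in shuffle position $j$ among the $S_1$ symbols, the number of $0$s preceding it is uniform on $\{0,\dots,m\}$ once we average over $j$. This follows by a bijection that rotates the word, or from $\sum_j\binom{j-1+t}{t}\binom{2m-j-t}{m-t}=\binom{2m+1}{m}$. Hence the probability that it precedes the $k$th zero is $k/(m+1)$, and summing over the $m$ elements of $S_1$ gives the expectation $km/(m+1)$ used above.
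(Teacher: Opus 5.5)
Your proof is correct and follows the paper's route: the paper conditions on the rank $k$ of $y_i$ in $S_i$, observes that its rank in $P$ is the $k$th order statistic of a uniform $m$-subset of $\{1,\ldots,2m\}$ with mean $k(2m+1)/(m+1)$ (citing O'Neill), and then scales the augmented list with duplicated interior values exactly as you do, so your gap-occupancy computation simply gives a self-contained proof of that order-statistic mean. Two small slips in your justification do not affect the result. First, $\sum_{j}\binom{j-1+t}{t}\binom{2m-j-t}{m-t}$ equals $\binom{2m}{m+1}$, not $\binom{2m+1}{m}$; what you need is only that it does not depend on $t$, and that still holds. Second, uniformity on $\{0,\dots,m\}$ holds for the $j$th symbol $1$ when $j$ is uniform, not for a fixed element of $S_1$; this is harmless because you sum over all of $S_1$, which is the same as summing over all $j$.
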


\begin{proof}
Conditional on $y_i$ having rank $k$ in $S_i$, its rank in $P$
is the $k$th order statistic of a uniform $m$-element subset of
$\{1,\ldots,2m\}$, whose mean is
$k(2m+1)/(m+1)$~\cite[Theorem~2]{ONeillOrderStatistics}.  Averaging gives
\eqref{eq:normalizedmean}.  Together with the endpoints, this scales
every spacing by $(2m+1)/(m+1)$; the second copy only repeats values.
This proves \eqref{eq:parallelgap}.
\end{proof}

\begin{proposition}\label{prop:gap}
For every $r\geq1$ and every choice of the positive integers
$L_1,\ldots,L_r$,
\begin{equation}\label{eq:gaprecurrence}
 \gap(P_r)=\frac{2m_r+1}{m_r+1}\,\gap(P_{r-1}).
\end{equation}
In particular, $\gap(P_r)\geq(3/2)^r$.
\end{proposition}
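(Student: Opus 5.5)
We need to prove $\gap(P_r)=\frac{2m_r+1}{m_r+1}\gap(P_{r-1})$. By Lemma~\ref{lem:duplicate-parallel-gap} applied with $S=R_r$ and $m=m_r$, we already have $\gap(P_r)=\frac{2m_r+1}{m_r+1}\gap(R_r)$. So the plan reduces to showing that the padding preserves the gap:
\[
\gap(R_r)=\gap(P_{r-1}) \qquad\text{for } R_r=C_{L}\oplus Q\oplus C_L,\ L=L_r,\ Q=P_{r-1}.
\]

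To see this, note that every linear extension of an ordinal sum is the concatenation of linear extensions of the summands. Hence a uniform extension of $R_r$ places the lower chain at ranks $1,\dots,L$ and the upper chain at ranks $L+q+1,\dots,2L+q$, where $q=q_r$. The restriction to $Q$ is then a uniform extension of $Q$ shifted by $L$, so $h_{R_r}(x)=L+h_Q(x)$ for $x\in Q$.

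The sorted expected-rank list of $R_r$, with endpoints $0$ and $2L+q+1$, therefore has three kinds of consecutive differences:
\begin{itemize}
\item[(i)] differences of exactly $1$ along the lower chain, starting from $0$, and along the upper chain, ending at $2L+q+1$;
\item[(ii)] the junctions $L+h_Q(\min)-L$ and $(L+q+1)-(L+h_Q(\max))$, which equal the first gap $s_1$ and the last gap $q+1-s_q$ of $Q$;
\item[(iii)] the interior spacings of $Q$, unchanged.
\end{itemize}
So the spacing multiset of $R_r$ is that of $Q$, including its endpoint spacings, together with $2L$ extra spacings equal to $1$. Since $\gap(Q)\ge1$ (the $q+1$ spacings of $Q$ sum to $q+1$), the maximum is unaffected, and $\gap(R_r)=\gap(Q)$.

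Combining this with the lemma gives the recurrence~\eqref{eq:gaprecurrence}. For the bound, $\gap(P_0)=\gap(C_2)=1$ because the ranks are $1,2$ with endpoints $0,3$. Each factor $\frac{2m+1}{m+1}$ is increasing in $m$, and $m_s\ge2L_s+2\ge 4$, so each factor is at least $3/2$. Induction then yields $\gap(P_r)\ge(3/2)^r$. The only mildly delicate step is checking that the junction spacings coincide exactly with the endpoint spacings of $Q$, which is what makes the endpoint convention in the definition of $\gap$ essential.
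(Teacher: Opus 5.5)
Your proposal is correct and follows the paper's proof: you reduce via the duplicate-parallel-sum lemma to $\gap(C_L\oplus Q\oplus C_L)=\gap(Q)$, which holds because the padding translates the expected ranks of $Q$ by $L$ and adds only unit spacings (using $\gap(Q)\ge 1$). You then iterate from $\gap(P_0)=1$, noting that each multiplier is at least $3/2$.
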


\begin{proof}
For every nonempty finite poset $Q$ and $L\geq1$, padding translates its
expected ranks by $L$ and adds only unit spacings.  Since $\gap(Q)\geq1$,
this gives
\begin{equation}\label{eq:ordinalgap}
 \gap(C_L\oplus Q\oplus C_L)=\gap(Q).
\end{equation}
By \eqref{eq:construction}, \eqref{eq:ordinalgap}, and
\cref{lem:duplicate-parallel-gap},
\[
 \gap(P_r)=\frac{2m_r+1}{m_r+1}\gap(R_r)
 =\frac{2m_r+1}{m_r+1}\gap(P_{r-1}).
\]
Finally, $\gap(P_0)=1$ and each multiplier is at least $3/2$; iteration
proves the product in \eqref{eq:mainbounds} and the stated lower bound.
\end{proof}

\subsection{Separation in parallel merge}

A uniform extension of two
parallel $M$-element components is obtained by choosing their internal
extensions independently and merging them according to an independent
uniform word containing $M$ zeros and $M$ ones.  

The symbol $i$ means that the next element is taken from component $i$.
For $M=2L+q$, $i\in\{0,1\}$, and $1\leq k\leq M$, let $\pi_i(k)$ be
the position in the word of the $k$th occurrence of symbol $i$, and define
the integer interval
\begin{equation}
 \mathcal B_i=[\pi_i(L+1),\pi_i(L+q)].             \label{eq:blocks}
\end{equation}
Thus $\mathcal B_i$ is the span of the $q$ non-padding occurrences from
component $i$.

\begin{lemma}[Separation of the non-padded elements]\label{lem:separation}
Fix $q\geq1$.  There are constants $\eta_q>0$ and $L_0(q)$ such that the
following holds for $L\ge L_0(q)$. 
Mark at most $2q$ pairs $(i,k)$ with $i\in\{0,1\}$,
$1\leq k\leq M$, and $k\notin\{L+1,\ldots,L+q\}$;
then, uniformly over the choice of marks, a uniform merge satisfies
\begin{equation}
 \PP\bigl(\mathcal B_0\cap\mathcal B_1\ne\varnothing
   \ \text{or some marked $i$-occurrence lies in $\mathcal B_{1-i}$}\bigr)
 \le \eta_q\,M^{-1/2}.                                 \label{eq:separation-bound}
\end{equation}
\end{lemma}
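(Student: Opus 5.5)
The plan is to reduce both parts of the bad event in \eqref{eq:separation-bound} to point probabilities of hypergeometric counts at times of order $M$. Each such point probability is $O(M^{-1/2})$, and only $O_q(1)$ of them are needed.

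For $0\le t\le 2M$, let $N_i(t)$ be the number of symbols $i$ among the first $t$ letters of the uniform merge word. Then $N_i(t)$ is hypergeometric: we draw $t$ letters from $2M$, of which $M$ are $i$'s. Its law is log-concave with variance $t(2M-t)/(4(2M-1))$. I will first record the uniform point bound
\[
\max_a\PP\bigl(N_i(t)=a\bigr)\le \frac{C}{\sqrt{t(2M-t)/M}} \qquad\text{for } L/2\le t\le 2M-L/2.
\]
This follows from the standard fact that a log-concave integer law with variance $v\ge1$ has maximal atom at most $C/\sqrt v$, or alternatively from Stirling's formula. Since $M=2L+q$ and $L\ge L_0(q)$, every time $t$ that appears below lies in $[L,2M-L]$, so the bound reads $C'M^{-1/2}$ with an absolute constant $C'$.

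\emph{Overlap of the blocks.} By \eqref{eq:blocks}, $\mathcal B_0\cap\mathcal B_1\ne\varnothing$ forces a position $t$ at which the $i$-count satisfies $L\le N_i(t)\le L+q$ for both $i$. Then $2L\le t\le 2L+2q$ and $|N_0(t)-N_1(t)|\le q$. Because $N_0-N_1$ changes by $1$ per letter, this gives $|N_0(2L)-N_1(2L)|\le 3q$, that is, $N_0(2L)$ lies in an interval of at most $3q+1$ values around $L$. By the point bound, this has probability at most $(3q+1)C'M^{-1/2}$.

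\emph{A marked occurrence inside the other block.} Fix a mark $(i,k)$ with $k\notin\{L+1,\ldots,L+q\}$, and suppose $s=\pi_i(k)\in\mathcal B_{1-i}$. Then $N_i(s)=k$, and the count $c=N_{1-i}(s)=s-k$ lies in $[L+1,L+q]$. (The endpoints of $\mathcal B_{1-i}$ are occurrences of $1-i$, so at an interior $i$-position the $(1-i)$-count is between $L+1$ and $L+q-1$.) Hence
\[
\PP\bigl(\pi_i(k)\in\mathcal B_{1-i}\bigr)\le\sum_{c=L+1}^{L+q}\PP\bigl(N_i(k+c-1)=k-1\bigr).
\]
Each position $s=k+c$ satisfies $L\le s\le 2M-L$: this is clear if $k\le L$, and if $k\ge L+q+1$ then $s\le M+L+q\le 2M-L$. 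So every term is at most $C'M^{-1/2}$, and each mark contributes at most $qC'M^{-1/2}$.

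A union bound over the overlap event and the at most $2q$ marks gives \eqref{eq:separation-bound} with $\eta_q=C'(3q+1+2q^2)$. This bound depends only on $q$ and is uniform over the choice of marks, as required.

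The only genuinely technical step is the uniform atom bound for the hypergeometric law over the whole range of times $[L,2M-L]$. I would derive it directly from log-concavity together with the exact variance formula. This avoids local CLT machinery and makes $L_0(q)$ simply the threshold at which these times stay a constant fraction of $M$ away from $0$ and $2M$.
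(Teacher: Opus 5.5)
Your proof is correct. It shares the paper's skeleton: reduce the bad event to $O_q(1)$ point probabilities, each $O(M^{-1/2})$, and take a union bound. The probabilistic representation, however, is different.

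\textbf{The paper's route.} It assigns independent uniform keys to the occurrences. Conditional on the key $x$ of a fixed occurrence, that occurrence's opposite-symbol count is then $\operatorname{Bin}(M,x)$. One uniform binomial atom bound handles every term: the atom at $j$ is maximized at $x=j/M$ and controlled by the Baillon--Cominetti--Vaisman inequality. The overlap event is reduced to one of the two fixed occurrences $\pi_0(L+1)$, $\pi_1(L+1)$ (the later one) having opposite count in $[L,L+q]$.

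\textbf{Your route.} You work with hypergeometric counts $N_i(t)$ at deterministic times.
\begin{itemize}
\item For a marked occurrence, you decompose $\{\pi_i(k)\in\mathcal B_{1-i}\}$ by the position $s=k+c$ and use $\PP(\pi_i(k)=s)\le\PP(N_i(s-1)=k-1)$.
\item For the overlap event, you pass to the single fixed time $2L$ using the unit-increment property of $N_0-N_1$.
\end{itemize}
This avoids the key coupling. It also avoids the external Bernoulli-sum theorem: log-concavity of the hypergeometric law plus its exact variance, or a direct Stirling computation, suffices. The cost is that you must check that every time at which you evaluate an atom stays in $[L,2M-L]$, where the variance is of order $M$; you carry out this check correctly, including the boundary case $k\ge L+q+1$, where $s\le M+L+q=2M-L$.

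Both routes give a constant $\eta_q$ that is quadratic in $q$.
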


\begin{proof}
Assign independent uniform $[0,1]$ keys to the occurrences of each
symbol and merge the two sorted lists.  This produces a uniform word
containing $M$ zeros and $M$ ones.  Conditional on the key $x$ of a fixed
occurrence, the number of opposite symbols preceding it has law
$\operatorname{Bin}(M,x)$.  Since $M=2L+q$, the integers from $L$ to
$L+q$ lie in $[M/3,2M/3]$ for all sufficiently large $L$.  The mass at
$j$ is maximized over $x$ at $x=j/M$.  The uniform Bernoulli-sum bound
of \textcite[Theorem~2.1, p.~353]{BaillonCominettiVaisman}
therefore gives an absolute constant $C$ such that
\[
 \sup_{0\le x\le1}\ \max_{L\le j\le L+q}
 \PP(\operatorname{Bin}(M,x)=j)\le C M^{-1/2}.
\]
Either event in \eqref{eq:separation-bound} therefore forces one of the
marked occurrences, or the later of $\pi_0(L+1)$ and $\pi_1(L+1)$, to
have an opposite-symbol count in $[L,L+q]$.  A union bound over at most
$2q+2$ occurrences and $q+1$ counts proves the result with
$\eta_q=C(2q+2)(q+1)$.
\end{proof}

\subsection{The one-step entropy recursion}

For a finite poset $Q$, set
\begin{equation}\label{eq:Tdef}
 \mathcal T_L(Q)=
 (C_L\oplus Q\oplus C_L)\sqcup(C_L\oplus Q\oplus C_L).
\end{equation}
Label the two copies of $C_L\oplus Q\oplus C_L$ by $i=0,1$, and call
them the two components.  Let $Q_i$ be the copy of $Q$ in component $i$;
identify each $Q_i$ with $Q$ when comparing their distributions.  The
two copies of $C_L$ in that component are its lower and upper padding
chains.  Given $X\subseteq\mathcal T_L(Q)$, set $A_i=X\cap Q_i$, and let
$Z_i$ be the subset of $X$ lying in the two padding chains of component
$i$.
Put
\begin{equation}\label{eq:atc}
 a=|A_0|+|A_1|,\qquad
 t=|Z_0|+|Z_1|,\qquad
 c=
 \begin{cases}
  1,&A_0\ne\varnothing\text{ and }A_1\ne\varnothing,\\
  0,&\text{otherwise}.
 \end{cases}
\end{equation}
When $A_i$ is empty, interpret
$\boldsymbol\sigma_{Q_i}|_{A_i}$ as the deterministic empty order, whose
entropy is zero.

\begin{lemma}[One-step entropy recursion]\label{lem:onestep}
For every finite $Q$ and every $\eps>0$, all sufficiently large $L$ satisfy,
simultaneously for every nonempty $X\subseteq\mathcal T_L(Q)$,
\begin{equation}\label{eq:onestep}
 H(\boldsymbol\sigma_{\mathcal T_L(Q)}|_X)
 \leq H(\boldsymbol\sigma_{Q_0}|_{A_0})
      +H(\boldsymbol\sigma_{Q_1}|_{A_1})+2t+c+\eps.
\end{equation}
\end{lemma}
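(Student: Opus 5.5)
The plan is to realize a uniform extension of $\mathcal T_L(Q)$ as in the separation subsection. Take two independent uniform extensions of $Q_0,Q_1$, extend each by its (deterministic) padding chains, and merge them by an independent uniform word $w$ with $M=2L+q$ zeros and ones, where $q=|Q|$. The relative order $\boldsymbol\sigma|_X$ is then a deterministic function of three pieces of data:
\begin{itemize}
\item $\boldsymbol\sigma_{Q_0}|_{A_0}$;
\item $\boldsymbol\sigma_{Q_1}|_{A_1}$;
\item the restriction of $w$ to the occurrences indexed by $S_i=\{k:\text{the $k$th element of component $i$ lies in }Z_i\}\cup\{L+1,\ldots,L+q\}$ for $i=0,1$.
\end{itemize}
The index sets $S_i$ are deterministic even though the positions of $A_i$ inside the block are random. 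Since the two internal orders are independent of each other and of $w$, subadditivity gives
\[
H(\boldsymbol\sigma|_X)\le H(\boldsymbol\sigma_{Q_0}|_{A_0})+H(\boldsymbol\sigma_{Q_1}|_{A_1})+H(W),
\]
where $W$ is a suitable function of $w$ alone. The whole task is to show $H(W)\le 2t+c+\eps$ for a well-chosen $W$, uniformly in $X$.

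I would take $W=(W',Y)$. The first component $W'$ is the binary word recording the merge order of the selected padding occurrences together with one marker for each nonempty block $\mathcal B_i$ (the block spanned by the occurrences $L+1,\ldots,L+q$). The second component $Y$ is a ``local correction'' describing how the block occurrences actually interleave with opposite-symbol occurrences and with each other. On the good event, where $\mathcal B_0\cap\mathcal B_1=\varnothing$ and no selected padding occurrence of component $1-i$ lies inside $\mathcal B_i$, the pair $(W',\text{internal orders})$ already determines $\boldsymbol\sigma|_X$. There $Y$ carries no information, and we set it to a fixed null symbol.

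For $W'$ the count is elementary. Within a single component the relative order of its selected padding elements and its marker is forced, so $W'$ is determined by the interleaving of two fixed-order sequences of total length $t+b$, where $b\in\{0,1,2\}$ is the number of nonempty blocks. Hence $H(W')\le t+b$ bits. A short case check shows $t+b\le 2t+c$ in every case:
\begin{itemize}
\item if $t\ge1$, then $b\le 1+c\le t+c$;
\item if $t=0$ and $b=2$, the entropy is at most one bit, and $c=1$;
\item if $t=0$ and $b\le1$, then $W'$ is trivial.
\end{itemize}

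The remaining step, which I expect to be the main obstacle, is showing $H(Y)\le\eps$ uniformly in $X$. The difficulty is that $t$ may be of order $L$, so a naive union bound over all selected padding occurrences, or a crude bound like $\PP(\text{bad})\cdot\log(|X|!)$, does not vanish. I would define $Y$ without reference to $X$: on the bad event let $Y$ be the pattern of the full word $w$ restricted to the window from $\min(\pi_0(L+1),\pi_1(L+1))$ to $\max(\pi_0(L+q),\pi_1(L+q))$, together with the counts of opposite symbols preceding the window. Then $\boldsymbol\sigma|_X$ is determined by $W'$, $Y$ and the internal orders on both events.

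The entropy of $Y$ is governed by
\begin{itemize}
\item the probability of the bad event, which is $O_q(M^{-1/2})$ by \cref{lem:separation} applied with the marks chosen as the at most $2q$ padding indices adjacent to the blocks;
\item the number $N$ of opposite occurrences inside the window, whose tail is at least geometrically small in $N$, since each extra opposite occurrence in a window of $q$ own occurrences costs a binomial point probability.
\end{itemize}
Bounding $H(Y)\le H(\mathbf 1_{\mathrm{bad}})+\E[\,O(N+q)\log(N+q+M)\mathbf 1_{\mathrm{bad}}]$, and noting that the relevant offsets live in a range of size $O(\sqrt M)$ with high probability, should give $H(Y)=O_q(M^{-1/2}\log M)\to0$. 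The points I would verify most carefully are the precise definition of $Y$ and the check that it determines the interleaving of selected padding occurrences lying inside a block. Choosing $L$ large then makes the total error at most $\eps$ simultaneously for all $X$, proving \eqref{eq:onestep}.
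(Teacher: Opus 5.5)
\textbf{There is a genuine gap: the bad event is not rare when $t$ is large.} The step $H(Y)\le\eps$ fails for this reason. Your bad event includes ``some selected padding occurrence of component $1-i$ lies in $\mathcal B_i$''. \Cref{lem:separation} controls this only for at most $2q$ marked occurrences with \emph{fixed} indices.

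Take $q\ge2$ and $X=\mathcal T_L(Q)$. The $q$ occurrences of symbol $0$ spanning $\mathcal B_0$ are consecutive in the merge word with probability only about $2^{1-q}$. So with probability at least about $1/2$ some occurrence of symbol $1$ lies inside $\mathcal B_0$. Since the blocks are disjoint with high probability, that occurrence is a selected padding element. Its index in component $1$ typically differs from $L$ by order $\sqrt M$, so marking ``the $2q$ padding indices adjacent to the blocks'' does not capture it. On this typical bad event your window runs from one block to the other, typically across order $\sqrt M$ symbols. Hence neither the geometric tail nor $H(Y)=O_q(M^{-1/2}\log M)$ holds.

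The missing idea is that the factor $2$ in $2t$ makes large $t$ trivial. Conditional on the pair $O$ of restricted orders on $A_0,A_1$, the order on $X$ is a merge of two fixed sequences of total length $a+t$. So $H(\boldsymbol\sigma|_X\mid O)\le a+t\le 2t$ whenever $t\ge a$. Separation is needed only when $t<a\le 2q$. Then \emph{all} selected padding occurrences can be marked, so \cref{lem:separation} bounds the bad probability $p$ by $O_q(M^{-1/2})$ uniformly in $X$. The bad event then costs at most $h_2(p)+2q\,p$ extra bits. This is exactly the paper's argument.

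\textbf{A second, smaller problem: the determinism claim is false.} $\boldsymbol\sigma|_X$ is not a function of $\boldsymbol\sigma_{Q_0}|_{A_0}$, $\boldsymbol\sigma_{Q_1}|_{A_1}$ and $w$. Which indices among $L+1,\ldots,L+q$ the elements of $A_i$ occupy depends on the full internal extension of $Q_i$. For example, let $Q$ be a two-element antichain and let $X$ consist of the two copies of one element. The restricted orders are trivial. Yet when the four block occurrences appear as $0101$, the order of the two selected elements depends on whether each is first or second in its own component. So $W$ cannot be a function of $w$ alone.

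Instead use the chain rule $H(\boldsymbol\sigma|_X)=H(O)+H(\boldsymbol\sigma|_X\mid O)$. This is valid because $O$ is a function of $\boldsymbol\sigma|_X$, and $H(O)$ splits by independence. Then bound the bad-event contribution by the merge count given $O$, as above. With these two changes, your marker count on the good event and your check $t+b\le 2t+c$ go through.
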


\begin{proof}
Put $q=|Q|$ and
\[
 Y=\boldsymbol\sigma_{\mathcal T_L(Q)}|_X,
 \qquad
 O_i=Y|_{A_i},
 \qquad O=(O_0,O_1).
\]
The internal extensions supplied by the two component extensions are
independent.  Thus $O_0$ and $O_1$ are independent, and $O_i$ has the same
law as $\boldsymbol\sigma_{Q_i}|_{A_i}$, so
$H(O_i)=H(\boldsymbol\sigma_{Q_i}|_{A_i})$.  Moreover,
$O$ is a function of $Y$.  The chain rule, followed by the independence
of $O_0$ and $O_1$, gives
\begin{equation}
 \begin{aligned}
 H(Y)&=H(O)+H(Y\mid O)\\
     &=H(\boldsymbol\sigma_{Q_0}|_{A_0})
       +H(\boldsymbol\sigma_{Q_1}|_{A_1})+H(Y\mid O).
 \end{aligned}                                           \label{eq:entropy-reduction}
\end{equation}
It remains to prove, uniformly over $X$, that
\begin{equation}
 H(Y\mid O)\le2t+c+\eps.                               \label{eq:onestep-conditional}
\end{equation}

Conditional on $O$, the selected elements in component $i$ have the
fixed order
\[
 \text{selected lower padding},\quad O_i,\quad
 \text{selected upper padding}.
\]
The order $Y$ is a binary merge of these two fixed ordered sequences. 
If
$t\ge a$, there are at most $2^{a+t}$ binary merges, so
\[
 H(Y\mid O)\le a+t\le2t.
\]
This proves \eqref{eq:onestep-conditional} in the first case.

Suppose now that $t<a$, and mark the $t$ occurrences corresponding to
$Z_0\cup Z_1$.  Here $\mathcal B_i$ is the span of the occurrences from
$Q_i$, so it is the block corresponding to the selected set $A_i$.
Let $I_X=1$ when $\mathcal B_0$ and $\mathcal B_1$ are disjoint and no
marked $i$-occurrence lies in $\mathcal B_{1-i}$, and let $I_X=0$
otherwise.

On $I_X=1$, each nonempty $A_i$ is a block of $Y$.  Contract these blocks
to labeled markers.  Conditional on $O$, the contracted order is a
fixed-count binary merge of $t+c+1\le t+2$ terms, so it has at most
$2^{t+c}$ possible orders and, together with $O$, determines $Y$.  Hence
\[
 H(Y\mid O,I_X=1)\le t+c.
\]
On $I_X=0$, the uncontracted merge gives
$H(Y\mid O,I_X=0)\le a+t$.

Put $p_X=\PP(I_X=0)$ and
$p_L(q)=\eta_q(2L+q)^{-1/2}$.  Since $t<a\le2q$,
\cref{lem:separation} gives $p_X\le p_L(q)\to0$.  The merge word, and
therefore $I_X$, is independent of $O$.  If we write
$h_2(p)=-p\log_2p-(1-p)\log_2(1-p)$ for binary entropy and take $L$
large enough that $p_L(q)\le1/2$, the chain rule gives
\[
 H(Y\mid O)
 \le h_2(p_X)+(1-p_X)(t+c)+p_X(a+t)
 \le 2t+c+h_2(p_L(q))+2q\,p_L(q).
\]
For all sufficiently large $L$, the last two terms are at most $\eps$.
This proves \eqref{eq:onestep-conditional}; together with
\eqref{eq:entropy-reduction}, it proves \eqref{eq:onestep}.
\end{proof}

\subsection{Uniform hereditary entropy}

Lemma~\ref{lem:onestep} bounds the entropy at stage $r$ by the entropies
of the two copies from stage $r-1$, together with the padding contribution
and an error $\eps_r$.  These errors accumulate when the recurrence is
iterated, so we choose the summable geometric sequence
\begin{equation}\label{eq:eps}
 \eps_r=2^{-r},\qquad \sum_{r\geq1}\eps_r=1.
\end{equation}
Its total is $1$, which keeps the final
entropy coefficient below $3$.
At stage $r\geq1$, after $P_{r-1}$ is fixed, choose $L_r$ large enough for
Lemma~\ref{lem:onestep} with $Q=P_{r-1}$ and $\eps=\eps_r$.

\begin{proposition}\label{prop:tau}
For every $r\geq0$ and every nonempty $X\subseteq P_r$,
\begin{equation}\label{eq:H3}
 H(\boldsymbol\sigma_{P_r}|_X)
 \leq(3-2^{-r})|X|-1<3|X|.
\end{equation}
Thus $\tau(P_r)<3$.
\end{proposition}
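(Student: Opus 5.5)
We argue by induction on $r$, proving the sharper bound $H(\boldsymbol\sigma_{P_r}|_X)\le(3-2^{-r})|X|-1$ for every nonempty $X\subseteq P_r$. The bound $<3|X|$ and $\tau(P_r)<3$ then follow at once. It is convenient to extend the statement to $X=\varnothing$ in the weaker form $H\le 0$, which holds trivially, so that empty parts can be handled uniformly.

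\emph{Base case $r=0$.} Here $P_0=C_2$ is a chain, so every restriction is deterministic and has entropy $0$. Since $|X|\ge1$, we have $0\le 2|X|-1=(3-2^{0})|X|-1$.

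\emph{Inductive step.} Fix $r\ge1$ and a nonempty $X\subseteq P_r=\mathcal T_{L_r}(P_{r-1})$. Define $A_0,A_1,t,c$ as in \eqref{eq:atc}. By the choice of $L_r$, Lemma~\ref{lem:onestep} gives
\[
 H(\boldsymbol\sigma_{P_r}|_X)\le H_0+H_1+2t+c+2^{-r},
\]
where $H_i=H(\boldsymbol\sigma_{Q_i}|_{A_i})$ and $Q_i$ is a copy of $P_{r-1}$. Write $\beta=3-2^{-(r-1)}$, so the target coefficient is $\beta+2^{-r}=3-2^{-r}$. We split into cases according to how many of $A_0,A_1$ are nonempty.

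If both are nonempty, then $c=1$, and the inductive hypothesis gives $H_0+H_1\le\beta(|A_0|+|A_1|)-2$. The total is at most $\beta a-1+2t+2^{-r}$.

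If exactly one is nonempty, then $c=0$ and $H_0+H_1\le\beta a-1$. The total is at most $\beta a-1+2t+2^{-r}$.

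If both are empty, then $a=0$ and $c=0$. Since $X$ is nonempty, $t=|X|\ge1$, and the total is at most $2t+2^{-r}$.

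It remains to compare each of these with $(3-2^{-r})(a+t)-1$.

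In the first two cases we need $\beta a+2t+2^{-r}\le(\beta+2^{-r})a+(3-2^{-r})t$. This holds because $2t\le(3-2^{-r})t$ and $2^{-r}\le 2^{-r}a+t\,(1-2^{-r})$. The latter needs $a\ge1$, which holds in these cases.

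In the third case we need $2t+2^{-r}\le(3-2^{-r})t-1$, that is, $(1-2^{-r})t\ge1+2^{-r}$. This fails when $t=1$. This is the main obstacle: a single padding element has entropy $0$, but the bound supplied by Lemma~\ref{lem:onestep} is too crude for it.

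To fix this, handle the case $a=0$ directly instead of through Lemma~\ref{lem:onestep}. When $a=0$, $X$ consists only of padding elements. Within each component these form a chain, so $Y$ is a binary merge of two fixed sequences of total length $t$. Hence $H(Y)\le t$, with $H(Y)=0$ when $t=1$ or when one side is empty. It then suffices to check $t\le(3-2^{-r})t-1$, which holds since $(2-2^{-r})t\ge1$ for $t\ge1$.

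With every case verified, the induction closes, giving \eqref{eq:H3} and $\tau(P_r)<3$.
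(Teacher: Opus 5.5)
Your proof is correct and follows essentially the same route as the paper. Both argue by induction on $r$, handle the case $a=0$ directly by the $2^t$ merge count, and otherwise combine Lemma~\ref{lem:onestep} with the inductive hypothesis. Your case split on the number of nonempty $A_i$ is exactly the paper's bookkeeping via $k$ and $c=k-1$, and the error term $2^{-r}$ is absorbed into the coefficient of $a$ because $a\ge1$.
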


\begin{proof}
Put, for $r\geq0$,
\[
 \delta_r=\sum_{s=1}^r\eps_s=1-2^{-r},
\]
where the sum is empty when $r=0$ and hence has value~$0$.
It is enough to prove the following statement by induction on $r$: for
every nonempty $X\subseteq P_r$,
\begin{equation}
 H(\boldsymbol\sigma_{P_r}|_X)\leq(2+\delta_r)|X|-1.
 \label{eq:entropy-induction}
\end{equation}
Since $2+\delta_r=3-2^{-r}$, this is exactly \eqref{eq:H3}.  For $r=0$, every
restriction of $P_0=C_2$ has deterministic order, so
\eqref{eq:entropy-induction} holds.

For the induction step, view
$P_r=\mathcal T_{L_r}(P_{r-1})$ and use the notation
\eqref{eq:atc}.  If $a=0$, then $t=|X|\ge1$, and the restricted order is
a binary merge of two fixed padding sequences.  There are at most $2^t$
such merges, so
\[
 H(\boldsymbol\sigma_{P_r}|_X)\le t\le(2+\delta_r)t-1.
\]

Suppose $a>0$, and let $k\in\{1,2\}$ count the nonempty $A_i$.  The
induction hypothesis applied separately to those $k$ sets contributes
$(2+\delta_{r-1})a-k$.  Together with \cref{lem:onestep}, this gives
\begin{align*}
 H(\boldsymbol\sigma_{P_r}|_X)
 &\leq(2+\delta_{r-1})a-k+2t+c+\eps_r\\
 &\leq(2+\delta_r)a+2t-k+c\\
 &=(2+\delta_r)a+2t-1\\
 &\leq(2+\delta_r)(a+t)-1.
\end{align*}
Here the second line uses
$\delta_r=\delta_{r-1}+\eps_r$ and $\eps_r\le\eps_r a$, the third uses
$c=k-1$, and the last uses $\delta_r\ge0$.  Since $a+t=|X|$, the induction is
complete.
\end{proof}

\section*{Acknowledgements}

The key ideas and theorem were found by ChatGPT 5.6 Sol.  The manuscript
is written by the author with assistance from Codex.
We thank Jeff Kahn and Max Aires for pointing out the application in
\cref{cor:isolated-balance}.
The author was supported by NSF MAI-2501597.

\printbibliography

\clearpage
\appendix
\section{The middle-regime bound}
\label{app:q122-middle}

This appendix proves \cref{prop:q122-middle}.  We begin with the two
probabilistic estimates used in its proof.

\subsection{Midpoint height under a nonnegativity condition}

\begin{lemma}[Midpoint height under a nonnegativity condition]
\label{lem:q122-excursion}
Fix $0<a<b<\infty$ and an integer $\kappa\ge0$.  There are constants
$\gamma>0$ and $m_0$ with the following property.  Let $m\ge m_0$ be
even, let $V$ be a nonnegative integer satisfying $V/m\in[a,b]$, and
choose $(D_1,\ldots,D_m)$ uniformly from
\[
 \left\{(d_1,\ldots,d_m)\in\mathbb Z_{\ge0}^m:
 d_1+\cdots+d_m=V\right\}.
\]
Set
\begin{equation}
 S_t:=\kappa+\sum_{j=1}^t\left(D_j-\frac Vm\right).
 \label{eq:q122-walkcondition}
\end{equation}
Then, under the displayed uniform law,
\begin{equation}
 \E\left[S_{m/2}\,\middle|\,
 S_t\ge0\text{ for every }1\le t\le m\right]
 \ge\gamma\sqrt m.                                     \label{eq:q122-excursionmean}
\end{equation}
The constants depend only on $a$, $b$, and $\kappa$.
\end{lemma}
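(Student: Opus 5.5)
The plan is to realize the uniform composition $(D_1,\ldots,D_m)$ as i.i.d.\ geometric variables conditioned on their sum. Let $\xi_1,\ldots,\xi_m$ be i.i.d.\ with $\PP(\xi=k)=(1-\theta)\theta^k$, where $\theta=\theta(V/m)$ is chosen so that $\E\xi=V/m$. Every composition of $V$ then has conditional probability proportional to $\theta^V$. Hence the law of $(D_j)$ is exactly the law of $(\xi_j)$ conditioned on $\sum\xi_j=V$, i.e.\ on $S_m=\kappa$. Since $V/m\in[a,b]$, the increments $\xi_j-V/m$ are centred, have variance bounded above and below, and have uniformly bounded exponential moments, all uniformly in $V/m\in[a,b]$. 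The object in \eqref{eq:q122-excursionmean} is therefore a random-walk bridge from $\kappa$ to $\kappa$ of length $m$, conditioned to stay nonnegative. It is not quite a bridge, because the walk is shifted by the non-integer mean; but the lattice of $S_t$ is $\kappa-tV/m+\mathbb Z$, and this is harmless.

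First, I would use the Markov property at the midpoint. Conditioning on $S_{m/2}=y$ splits the path into two independent pieces. The left piece starts at $\kappa$, ends at $y$, and stays nonnegative. The right piece, by time reversal, is the reversed walk started at $\kappa$ that ends at $y$ and stays nonnegative. The conditional law of $S_{m/2}$ is then proportional to
\[
\PP_\kappa\bigl(S_{m/2}=y,\ S\ge0\bigr)\cdot\widetilde{\PP}_\kappa\bigl(S_{m/2}=y,\ S\ge0\bigr)
\]
restricted to the relevant lattice, where the tilde denotes the reversed walk. Next I would invoke local limit theorems for walks conditioned to stay positive: uniformly over a compact family of increment laws, and for $y$ of order $\sqrt m$,
\[
\PP_\kappa\bigl(S_n=y,\ \text{positive}\bigr)\asymp \frac{U(\kappa)\,y}{n^{3/2}}\,e^{-y^2/(2\sigma^2n)}.
\]
These are Caravenna-type estimates, or the positive-walk estimates of \textcite{OttVelenik} that the paper already cites. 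From this local theorem the conditional density of $S_{m/2}/\sqrt m$ is comparable to $y^2e^{-cy^2}$. In particular, $\PP(S_{m/2}\ge\sqrt m\mid\text{positivity})\ge c_0>0$, and nonnegativity of $S_{m/2}$ on the conditioning event gives $\E[S_{m/2}\mid\cdot]\ge c_0\sqrt m$.

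If sharp local estimates are too heavy to cite uniformly, I would fall back on a softer comparison. It suffices to obtain a lower bound for the numerator on $\{y\in[\sqrt m,2\sqrt m]\}$ and an upper bound for the total mass of the same form. For the upper bound I would use the ballot-type bound $\PP_\kappa(S_n=y,\ \text{positive})\le C(1+y)n^{-3/2}$ together with a Gaussian tail for $y$. The two sides then contribute matching factors $m^{-3}\cdot m$ up to constants, which yields $\gamma$.

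The main obstacle is uniformity of these local estimates over the family of increment laws indexed by $V/m\in[a,b]$, together with the non-integer drift correction that puts $S_t$ on shifted lattices. I would handle this by noting that all constants in the local and ballot estimates depend only on the variance, on a third-moment bound, and on aperiodicity, which holds since geometric laws charge $0$ and $1$. These quantities are continuous in $\theta$ on a compact interval. The starting point $\kappa$ is fixed, so the renewal factor $U(\kappa)$ is bounded above and below. Finally, evenness of $m$ only ensures that $m/2$ is an integer time.
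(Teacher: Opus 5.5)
Your proposal is correct and follows essentially the paper's argument. The paper likewise represents the uniform composition as i.i.d.\ geometric variables conditioned on their sum. It then divides a uniform lower bound $\PP_s(\mathcal A_m,\ \widetilde S_{m/2}\ge\theta\sqrt m)\ge c_1m^{-3/2}$ by a uniform upper bound $\PP_s(\mathcal A_m)\le C_0m^{-3/2}$, both from Ott--Velenik (Lemmas~7.4 and~6.2). Here $\mathcal A_m$ is the event that the walk stays nonnegative and returns to $\kappa$ at time $m$. This is exactly your fallback ratio argument. The only difference is that the paper cites the joint bridge lower bound directly, so it needs neither your midpoint Markov split with the reversed walk nor the full density comparison with $y^2e^{-cy^2}$.
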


\begin{proof}
Set $s=V/m$.  Let $\PP_s$ be the joint law of independent random
variables $Y_1^{(s)},\ldots,Y_m^{(s)}$ with
\[
 \PP_s(Y_j^{(s)}=d)
 =\frac1{1+s}\left(\frac{s}{1+s}\right)^d,
 \qquad d\in\mathbb Z_{\ge0}.
\]
Write $\E_s$ for expectation under $\PP_s$.  Each variable has mean $s$,
and, conditional on $\sum_jY_j^{(s)}=V$, their joint law is uniform on
the set of integer tuples displayed in the lemma.

Define
\[
 \widetilde S_t=\kappa+\sum_{j=1}^t(Y_j^{(s)}-s),
 \qquad 0\le t\le m.
\]
Because $ms=V$, the law in the statement is the law of
$(\widetilde S_t)_{t=0}^m$ conditioned on
\[
 \mathcal A_m=
 \{\widetilde S_t\ge0\ (1\le t\le m),\
 \widetilde S_m=\kappa\}.
\]
This event is nonempty: the choice
$d_j=\lceil js\rceil-\lceil(j-1)s\rceil$ has total $V$ and keeps every
partial sum $\sum_{j=1}^t(d_j-s)=\lceil ts\rceil-ts$ nonnegative.
As $s$ ranges over $[a,b]$, the shifted geometric laws
$Y_j^{(s)}-1$ have exponential moments uniformly bounded near the origin,
and their probabilities at $-1,0,1$ are uniformly bounded below by a
positive constant.  These are the uniform hypotheses required in
\cite[Section~2.2]{OttVelenik}.  Moreover,
\[
 (Y_j^{(s)}-1)-\E_s[Y_j^{(s)}-1]=Y_j^{(s)}-s,
\]
so their centered walk is exactly $(\widetilde S_t)$.  After we increase
$m_0$ if necessary, \cite[Lemma~7.4]{OttVelenik}, applied at time $m/2$
with fixed endpoints $\kappa,\kappa$, gives constants $\theta,c_1>0$ such
that
\[
 \PP_s\bigl(\mathcal A_m,\
 \widetilde S_{m/2}\ge\theta\sqrt m\bigr)
 \ge c_1\,m^{-3/2}.
\]
The upper bound \cite[Lemma~6.2]{OttVelenik} gives a constant $C_0>0$
such that, uniformly over the same family,
\[
 \PP_s(\mathcal A_m)\le C_0\,m^{-3/2}.
\]
Dividing the two bounds and using nonnegativity on $\mathcal A_m$ gives
\[
 \E_s[\widetilde S_{m/2}\mid\mathcal A_m]
 \ge\theta\frac {c_1}{C_0}\sqrt m.
\]
Taking $\gamma=\theta c_1/C_0$ proves
\eqref{eq:q122-excursionmean}.
\end{proof}

\subsection{Monotonicity of constrained sequences}

\begin{definition}[Constrained sequence space]
\label{def:q122-constrained-sequences}
For integers $p<q$, endpoint heights $u\le v$, and real lower bounds
$w=(w_p,\ldots,w_q)$, let
\[
 \Omega(u,v;w)=
 \bigl\{(k_p,\ldots,k_q)\in\mathbb Z^{q-p+1}:
 k_p=u,\ k_q=v,\ k_p\le\cdots\le k_q,\ k_t\ge w_t\bigr\}.
\]
Whenever this set is nonempty, give it the uniform probability law.
\end{definition}

The next lemma describes how this uniform law changes when an endpoint or
a lower bound is raised.

\begin{lemma}[Endpoint and lower-bound monotonicity]
\label{lem:q122-monotonicity}
For every function of the interior coordinates that is nondecreasing in
each coordinate, its expectation cannot decrease when the left endpoint
is raised, when the right endpoint is raised, or when any of the lower
bounds $w_t$ is raised, provided the two sequence sets being compared are
nonempty.
\end{lemma}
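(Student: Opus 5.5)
The plan is to prove all three monotonicity assertions with one coupling argument. The tool will be the FKG/Holley framework on the distributive lattice $\mathbb Z^{q-p-1}$ of interior coordinates $(k_{p+1},\ldots,k_{q-1})$, ordered coordinatewise. Write $\Omega=\Omega(u,v;w)$ and $\Omega'=\Omega(u',v';w')$ for the two sets being compared, where $u\le u'$, $v\le v'$, $w\le w'$ coordinatewise, and both sets are nonempty. The claim is that the uniform law $\mu'$ on $\Omega'$ stochastically dominates the uniform law $\mu$ on $\Omega$, in the sense that $\E_{\mu'}F\ge\E_\mu F$ for every coordinatewise nondecreasing $F$. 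This is exactly the statement of the lemma.

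First, observe that each constraint set is a sublattice of $\mathbb Z^{q-p-1}$. The constraints are $k_t\le k_{t+1}$, the bounds $k_t\ge w_t$, and the endpoint conditions $u\le k_{p+1}$ and $k_{q-1}\le v$. Each of these is preserved by coordinatewise $\max$ and $\min$. For the chain conditions this holds because $\max(k_t,k'_t)\le\max(k_{t+1},k'_{t+1})$, and similarly for $\min$. Hence $\mathbf 1_\Omega$ and $\mathbf 1_{\Omega'}$ are indicators of sublattices.

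Next, I would verify Holley's condition for $\mu_2=\mathbf 1_{\Omega'}$ (normalized) over $\mu_1=\mathbf 1_\Omega$:
\[
 \mathbf 1_{\Omega'}(x\vee y)\,\mathbf 1_{\Omega}(x\wedge y)\ \ge\ \mathbf 1_{\Omega'}(x)\,\mathbf 1_{\Omega}(y).
\]
So suppose $x\in\Omega'$ and $y\in\Omega$. For $x\vee y\in\Omega'$ we need three things. The chain conditions hold by the sublattice property. The lower bounds hold since $x\vee y\ge x\ge w'$. The endpoint conditions $u'\le(x\vee y)_{p+1}$ and $(x\vee y)_{q-1}\le v'$ hold because $x$ already meets $u'$, and because $y_{q-1}\le v\le v'$. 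Symmetrically, for $x\wedge y\in\Omega$ we need the following. The lower bounds hold since $x\ge w'\ge w$ and $y\ge w$. The bottom endpoint $u\le\min(x_{p+1},y_{p+1})$ holds since $x_{p+1}\ge u'\ge u$. The top endpoint $\min(x_{q-1},y_{q-1})\le v$ holds via $y$. Holley's inequality then yields the desired stochastic domination. Because both measures are uniform on finite sets, the finite-support version of Holley's theorem applies directly (the lattice can be truncated to a finite box containing both sets).

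The only point needing care is the set-up. Each comparison (raising $u$, raising $v$, or raising one $w_t$) is a special case of the combined statement, so no separate cases are needed. The endpoint coordinates $k_p=u$ and $k_q=v$ are fixed, so they are treated as parameters entering only through the constraints $k_{p+1}\ge u$ and $k_{q-1}\le v$. If $q=p+1$ there are no interior coordinates and the statement is trivial. I expect no substantive obstacle beyond checking the four constraint types in the Holley verification, which is the main step. Alternatively one could build an explicit monotone coupling via Glauber dynamics with shared randomness, but Holley's theorem is the cleaner route.
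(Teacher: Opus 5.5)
Your proof is correct, but it uses a different correlation inequality from the paper's.

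The paper works with a single measure and the one-measure FKG inequality. The uniform law on the interior vectors of one sequence set is positively associated. Each one-parameter change produces nested sets:
\begin{itemize}
\item raising $u$ or some $w_t$ restricts the old set to an increasing event ($k_{p+1}\ge u'$, respectively $k_t\ge w_t'$);
\item the set with right endpoint $v$ is the set with right endpoint $v'$ restricted to the decreasing event $k_{q-1}\le v$.
\end{itemize}
So each claim reduces to the fact that conditioning on an increasing (decreasing) event cannot lower (raise) the mean of an increasing function.

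You instead compare the two uniform laws directly through Holley's two-measure inequality. Your check that $x\in\Omega'$ and $y\in\Omega$ force $x\vee y\in\Omega'$ and $x\wedge y\in\Omega$ handles all four constraint types correctly.

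What your route buys is that simultaneous changes of $u$, $v$, and $w$ are handled in one comparison. You need neither nested sets nor nonempty intermediate sets. The paper, when it applies the lemma in the proof of Proposition~\ref{prop:q122-middle}, chains single moves. There, raising the right endpoint before the left one is what keeps the intermediate set nonempty. The paper's route, in turn, needs only the weaker one-measure inequality, whose hypothesis is automatic for a uniform law on a distributive lattice. It also makes the monotone-event structure explicit.

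One citation detail: your two measures vanish off $\Omega'$ and $\Omega$. You should therefore invoke a form of Holley's inequality that permits zero weights, for instance the one obtained from the Ahlswede--Daykin four functions theorem. Some textbook statements assume strictly positive measures. Truncating to a finite box addresses finiteness, not this point.
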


\begin{proof}
If $q=p+1$, there are no interior coordinates and the claim is immediate.
Assume $q-p\ge2$, discard the two fixed endpoint coordinates, and regard
each law as a uniform law on its interior vector
$(k_{p+1},\ldots,k_{q-1})$.  These interior vectors form a finite
distributive lattice under coordinatewise minimum and maximum.  Its uniform
law is therefore positively associated by the FKG inequality
\cite[Proposition~1]{FortuinKasteleynGinibre}.

If $u\le u'$, then, after this identification of the interior coordinates,
the vectors allowed with left endpoint $u'$ are exactly those allowed with
left endpoint $u$ that satisfy $k_{p+1}\ge u'$.  Similarly, raising one or
more lower bounds restricts the original interior vectors by an increasing
event.  Positive association shows that conditioning on either event cannot
decrease the expectation of an increasing function.

Finally, if $v\le v'$, the vectors allowed with right endpoint $v$ are
exactly those allowed with right endpoint $v'$ that satisfy
$k_{q-1}\le v$.  This event is decreasing.  Its complement is increasing,
so FKG implies that conditioning on it cannot increase the expectation of
an increasing function.  Therefore raising the right endpoint from $v$ to
$v'$ cannot decrease that expectation.
\end{proof}

\subsection{Proof of the proposition}

\begin{proof}[Proof of \cref{prop:q122-middle}]
We first note that
\begin{equation}
 \varphi'(x)=\frac1r\frac{\varphi(x)}x,
 \qquad
 |\varphi''(x)|=\left(1-\frac1r\right)\frac{\varphi'(x)}x.
 \label{eq:q122-derivatives}
\end{equation}
Fix an integer $i$ with $1\le i\le N$ and
$2/r<\varphi'(i)<8r$.  By
\eqref{eq:q122-Gidentity},
\[
 G_i\ge 1+\E[K_i-\ell_i],
\]
because the other term in that identity is nonnegative.  We prove the
uniform estimate
\begin{equation}
 \E[K_i-\ell_i]\ge \gamma_r\sqrt{2R}-4,
 \qquad R=\lfloor N^{1/3}\rfloor,                      \label{eq:q122-middle-clearance}
\end{equation}
where $\gamma_r>0$ depends only on $r$.

The middle-regime inequalities place $i$ away from both endpoints.  More
precisely, for fixed $r$ and all sufficiently large $N$,
\begin{equation}
 a_rN\le i\le b_rN,
 \qquad 0<a_r<b_r<1,                                    \label{eq:q122-middleinterval}
\end{equation}
for constants $a_r,b_r$ depending only on $r$.  Set
\[
 i_-=i-R,\qquad i_+=i+R,\qquad m=i_+-i_-=2R,
 \qquad V=\ell_{i_+}-\ell_{i_-}.
\]
In particular, $1\le i_-<i<i_+\le N$ for all sufficiently large $N$.
Define the affine function joining the two wall points by
\[
 \bar\ell_t=\ell_{i_-}+\frac{t-i_-}{m}
 (\ell_{i_+}-\ell_{i_-}),
 \qquad i_-\le t\le i_+.
\]
Equation~\eqref{eq:q122-derivatives} and
\eqref{eq:q122-middleinterval} give
\begin{equation}
 \sup_{i_-\le t\le i_+}|\varphi''(t)|=O_r(N^{-1}).     \label{eq:q122-curvature}
\end{equation}
For a twice differentiable function on an interval of length $m$, the
deviation from its affine interpolation is at most
$m^2\sup|\varphi''|/8$.  Hence that deviation for $\varphi$ on
$[i_-,i_+]$ is $O_r(R^2/N)=o(1)$.  Allowing for the floor in
$\ell_t=\lfloor\varphi(t)\rfloor$, we obtain, for every integer $t$ in
this interval,
\begin{equation}
 \bar\ell_t-2\le\ell_t\le \bar\ell_t+2
 \qquad(i_-\le t\le i_+,\ t\in\mathbb Z).              \label{eq:q122-chord}
\end{equation}
The derivative varies by $o(1)$ on this interval as $N\to\infty$ with
$r$ fixed.  The middle-regime bounds and the inequalities
$|\ell_t-\varphi(t)|<1$ at $t=i_-,i_+$ therefore imply, uniformly over
all middle indices,
\begin{equation}
 \frac Vm\in\left[\frac1{2r},10r\right]\subset(0,\infty).
 \label{eq:q122-slope}
\end{equation}

Condition on $K_{i_-}=u$ and $K_{i_+}=v$.  Given these endpoint values,
the coordinates $(K_t)_{i_-<t<i_+}$ are uniform among all nondecreasing
integer sequences satisfying
\[
 K_{i_-}=u,\qquad K_{i_+}=v,\qquad
 K_t\ge\ell_t\quad(i_-<t<i_+).
\]
Indeed, once $u$ and $v$ are fixed, the numbers of possible height
sequences before $i_-$ and after $i_+$ do not depend on the coordinates
strictly between them.

Admissibility gives $u\ge\ell_{i_-}$, $v\ge\ell_{i_+}$, and $u\le v$.
Apply \cref{lem:q122-monotonicity} first to raise the right endpoint from
$\ell_{i_+}$ to $v$ and then to raise the left endpoint from
$\ell_{i_-}$ to $u$.  Neither operation can decrease the expected
midpoint.  Therefore
\[
 \E[K_i\mid K_{i_-}=u,K_{i_+}=v]\ge \E_{\ell}[K_i],
\]
where $\E_{\ell}$ is expectation for the uniform sequence with endpoints
$K_{i_-}=\ell_{i_-}$, $K_{i_+}=\ell_{i_+}$ and lower bounds
$(\ell_t)_{t=i_-}^{i_+}$.  By \eqref{eq:q122-chord}, these lower bounds
lie above the affine lower bounds $\bar\ell_t-2$.  Raising lower bounds
cannot decrease the expected midpoint, again by
\cref{lem:q122-monotonicity}.  Consequently,
\[
 \E_{\ell}[K_i]\ge \E_{\mathrm{aff}}[K_i],
\]
where $\E_{\mathrm{aff}}$ denotes expectation for the uniform sequence
with the same endpoints and only the conditions
$K_t\ge\bar\ell_t-2$.

Under this last law, define the increments
\[
 D_j=K_{i_-+j}-K_{i_-+j-1},\qquad 1\le j\le m.
\]
They are nonnegative integers with sum $V$, and every such $m$-tuple
satisfying the lower-bound condition has equal probability.  Moreover,
\[
 K_{i_-+t}=\ell_{i_-}+\sum_{j=1}^tD_j,
 \qquad
 \bar\ell_{i_-+t}=\ell_{i_-}+\frac{tV}{m}.
\]
Thus the affine lower-bound condition is exactly
\[
 S_t:=2+\sum_{j=1}^t\left(D_j-\frac Vm\right)\ge0
 \qquad(1\le t\le m).
\]
Apply \cref{lem:q122-excursion} with
$\kappa=2$, $a=1/(2r)$, and $b=10r$.  Equation
\eqref{eq:q122-slope} verifies its hypothesis and gives
\[
 \E_{\mathrm{aff}}[S_R]\ge\gamma_r\sqrt m
 =\gamma_r\sqrt{2R}.
\]
At the midpoint $i_-+R=i$,
\[
 S_R=2+K_i-\bar\ell_i.
\]
Using $\bar\ell_i-\ell_i\ge-2$ from \eqref{eq:q122-chord}, we conclude,
for every possible pair $(u,v)$, that
\[
 \E[K_i-\ell_i\mid K_{i_-}=u,K_{i_+}=v]
 \ge\gamma_r\sqrt{2R}-4.
\]
Averaging over $(u,v)$ proves \eqref{eq:q122-middle-clearance}.  Since
$R\to\infty$, the right-hand side is at least $r$ for all sufficiently
large $N$, depending on $r$.  Returning to
\eqref{eq:q122-Gidentity} proves $G_i\ge r$.
\end{proof}

\end{document}